\numberwithin{equation}{section}
\numberwithin{figure}{section}
\theoremstyle{plain}
\newtheorem{thm}{\protect\theoremname}
  \theoremstyle{definition}
  \newtheorem{defn}[thm]{\protect\definitionname}
  \theoremstyle{remark}
  \theoremstyle{plain}
  \newtheorem{lem}[thm]{\protect\lemmaname}
\date{\today}
\newcommand{\no}{\noindent}
  \providecommand{\definitionname}{Definition}
  \providecommand{\lemmaname}{Lemma}
  \providecommand{\remarkname}{Remark}
\providecommand{\theoremname}{Theorem}
  \providecommand{\definitionname}{Definition}
  \providecommand{\lemmaname}{Lemma}
  \providecommand{\remarkname}{Remark}
\providecommand{\theoremname}{Theorem}
\definecolor{gray}{rgb}{0.75, 0.75, 0.75}
\begin{document}

\title[Stochastic parabolic-hyperbolic equations]{Stochastic non-isotropic degenerate parabolic-hyperbolic equations}
\begin{abstract} We introduce the notion of pathwise entropy solutions for a class of degenerate parabolic-hyperbolic equations with non-isotropic nonlinearity and fluxes with rough time dependence and prove their well-posedness. In the case of Brownian noise and periodic boundary conditions, we prove that the pathwise entropy solutions converge to their spatial average and provide an estimate on the rate of convergence. The third main result of the paper is a new regularization result in the spirit of averaging lemmata. This work extends both the framework of pathwise entropy solutions for stochastic scalar conservation laws introduced by Lions, Perthame and Souganidis and the analysis of the long time behavior of stochastic scalar conservation laws by the authors to a new class of equations.
\end{abstract}

\author[B. Gess]{Benjamin Gess}
\address{Max-Planck Institute for Mathematics in the Sciences \\
04103 Leipzig\\
Germany }
\email{bgess@mis.mpg.de}

\author{Panagiotis E. Souganidis }

\address{Department of Mathematics \\
University of Chicago \\
Chicago, IL 60637 \\
USA }

\email{souganidis@math.uchicago.edu }

\keywords{Stochastic scalar conservation laws, parabolic-hyperbolic equations, averaging lemma, invariant measure, random dynamical systems, random attractor.}

\subjclass[2000]{H6015, 35R60, 35L65.}

\maketitle

\section{Introduction}

\noindent We are interested in ``stochastic'', scalar, degenerate parabolic-hyperbolic equations of the form 
\begin{equation}
\begin{cases}
du+{\displaystyle \sum_{i=1}^{N}\partial_{x_{i}}F^{i}(u)\circ dz^{i}=\div(A(u)Du)dt\quad\text{ in }\quad\R^{N}\times(0,T),}\\[1.5mm]
u=u^{0}\quad\text{ on }\quad\R^{N}\times\{0\},
\end{cases}\label{eq:scl}
\end{equation}
where 
\begin{equation}\label{takis data0}
u_{0}\in(BV\cap L^{\infty})(\R^{N}),  \ z\in C_{0}([0,T];\R^{N}) \  \text{and} \  A\in L_{loc}^{\infty}(\R;\R^{N\times N}).
\end{equation}
\noindent A particular example of the signal $z$ is the $N$-dimensional (fractional) Brownian motion. The Stratonovich notation $\circ$ in \eqref{eq:scl} is justified by showing that the pathwise entropy solution constructed in this paper is the limit of solutions $u^{(l)}$ to \eqref{eq:scl} with $z$ replaced by smooth approximations $z^{(l)}$ converging to $z$.

\noindent We assume that 
\begin{equation}\label{takis data1}
A\in  L_{loc}^{\infty}(\R; \mathcal S^N) \ \text{ is nonnegative},
\end{equation}
where $\mathcal S^N$ is the space of symmetric $N\times N$ matrices, and 
\begin{equation}\label{takis data2}
F\in C^{2}_{loc}(\R;\R^{N}) \ \text{with} \  F'(0)=0.
\end{equation}

\noindent  In this paper we introduce the notion of pathwise entropy solutions for \eqref{eq:scl} and prove that it is well-posed. In the setting of stochastic scalar conservation laws, that is when $A\equiv0$ in \eqref{eq:scl}, the notion of pathwise entropy solutions was introduced by  Lions, Perthame and Souganidis \cite{LPS13}, while Chen and Perthame \cite{CP03} studied the entropy solutions of the ``deterministic'' version of \eqref{eq:scl}, that is for $z(t)=(t,\dots,t)$. We also refer to Perthame and Souganidis \cite{PS03} which introduced an equivalent solution in the deterministic case called dissipative solution and to Lions, Perthame and Souganidis \cite{LPS14}, Gess and Souganidis \cite{GS14}, Friz and Gess \cite{FG14}, Hofmanova \cite{H16} for stochastic scalar conservation laws with $A\equiv0$ allowing for spatially inhomogeneous flux and semilinear noise. We note that, as compared to \cite{LPS13,LPS14,GS14,H16}, the inclusion of the non-isotropic parabolic part $A$ in \eqref{eq:scl} leads to additional complications and additional approximation and localization steps in the proof of well-posedness are needed. 

\noindent As particular examples, \eqref{eq:scl} includes stochastic porous media equations, that is 
\begin{equation}
du+{\displaystyle \sum_{i=1}^{N}\partial_{x_{i}}F_{i}(u)\circ dz^{i}=\D u^{[m]}dt\quad\text{ in }\quad\R^{N}\times(0,T)}  \ \text{ where $m>1$ and $u^{[m]}:=|u|^{m-1}u$.}\label{eq:intro_SPME}
\end{equation}
In this case our results extend previous work on stochastic porous media equations, based on the variational approach to SPDE, which did not allow treatment of nonlinear transport noise as in \eqref{eq:intro_SPME}. See, for example, Pr\'evot and Röckner~\cite{PR07}, Krylov and Rozovski{\u\i} \cite{KR79}, Pardoux \cite{P75}, Barbu, Da Prato and Röckner~\cite{BDPR09-2,Barbu2009a,BDPR08} and the references therein. In addition, in contrast to previous work, the pathwise approach to \eqref{eq:scl} developed in this paper immediately implies the existence of a corresponding random dynamical system. This is of particular interest, since \eqref{eq:scl} contains nonlinear multiplicative noise for which the existence of a random dynamical system is known to be a difficult problem (see, for example, Mohammed, Zhang and Zhao~\cite{MZZ08}, Flandoli~\cite{F95} and Gess~\cite{G14}).

In the second part of the paper we study  the long-time behavior and regularity properties of \eqref{eq:scl} set  on the torus $\TT^{N}$ and driven by Brownian motion, that is the initial value problem 
\begin{equation}
\begin{cases}
du+{\displaystyle \sum_{i=1}^{N}\partial_{x_{i}}F^{i}(u)\circ d\b_{t}^{i}=\div(A(u)Du)dt\quad\text{ in }\quad\TT^{N}\times(0,\infty),}\\[1.5mm]
u=u_{0}\quad\text{ on }\quad\TT^{N}\times\{0\},
\end{cases}\label{eq:scl-torus}
\end{equation}
with 
\begin{equation}\begin{split}\label{takis data3}
A\in C(\R;\mathcal S^N)\cap C^{1}(\R\setminus\{0\};\mathcal S^N),\\
|F''(\xi)|+|A'(\xi)|\le C(1+|\xi|^{p_{1}}+|\xi|^{p_{2}})
\end{split}
\end{equation}
for some $C>0,p_{1},p_{2}\in(-1,\infty)$ and all $\xi\in\R\setminus\{0\}$,
\begin{equation}
{\b}=(\b^{1},\ldots,\b^{N})\ \text{ is a standard two-sided Brownian motion},\label{b}
\end{equation}
and 
\begin{equation}
u_{0}\in L^{1}(\TT^{N}).\label{id}
\end{equation}
We prove that, under a genuine nonlinearity condition for $F,A$, the solutions to \eqref{eq:scl-torus} converge to their spatial average and we provide a rate of convergence. In the deterministic setting, that is when $z(t)=(t,\dots,t)$, Chen and Perthame~\cite{CP09} proved the convergence to the spatial average by different methods and without an estimate on the rate of convergence. In the hyperbolic case, that is when  ~$A\equiv0$ and restricted to one and two space dimensions respectively, Lax~\cite{L57}, E and Engquist~\cite{EE93} provided estimates on the rate of convergence. Convergence to the spatial average under weak conditions on the flux $F$ has been shown by Dafermos~\cite{D13}. A rate of convergence in any dimension for both deterministic and stochastic scalar conservation laws ($A\equiv0$) was established by the authors in  \cite{GS14-2}.

The third  result presented here is a regularity estimate based on averaging techniques. This extends the regularity results developed in \cite{GS14-2} for the hyperbolic case and provides new regularity estimates for stochastic porous media equations. Averaging lemmata for deterministic parabolic-hyperbolic equations have been established by Tadmor and Tao~\cite{TT07}.  

Stochastic scalar conservation laws driven by multiplicative semilinear noise, that is, 
\begin{equation}\label{eq:semilinear_SSCL}
   du + \div F(u) dt = g(u)dW_t
\end{equation}
have been intensively studied in recent years; we refer to Holden and Risebro \cite{HR97}, Feng and Nualart \cite{FN08}, Debussche and Vovelle \cite{DV10}, Chen, Ding and Karlsen \cite{CDK12}, Bauzet, Vallet and Wittbold \cite{BVW13}, Hofmanova \cite{H13} and the references therein. In particular, Debussche, Hofmanova and Vovelle \cite{DHV13} and Bauzet, Vallet and Wittbold \cite{BVW15} studied SPDE of the type \eqref{eq:semilinear_SSCL} additionally including a second order quasilinear term. The long-time behavior of solutions to \eqref{eq:semilinear_SSCL} with additive noise has been analyzed by E, Khanin, Mazel and Sinai \cite{EKMS00} and Debussche and Vovelle \cite{DV13}.

\subsection*{Organization of the paper}
In Section \ref{sec:Main-results}, we introduce the notion of pathwise entropy solutions and state the results about its well-posedness for general continuous paths and the long time behavior with rates as well as regularity in the stochastic case. The proofs are given in Sections \ref{sec:unique}, \ref{sec:existence}, \ref{sec:ltb}, \ref{sec:reg} respectively. Since some of the results are technical, we have chosen to present in the first two appendices the proofs of the most technical estimates. Finally, for the convenience of the reader we recall in Appendix~\ref{takis kinetic100}  the definition \cite[Definition 2.2]{CP03} of the kinetic solution to \eqref{eq:scl} for smooth driving signals. 

\subsection*{Notation}
We work in $\R^N$ or on the torus $\TT^N$, $S^{N-1}$ is  the unit sphere in $\R^N$, $\R_+:=(0,\infty)$ and $\delta$ is the ``Dirac'' mass at the origin in $\R$.   For notational convenience, we write  $A\lesssim B$ if $A\leq CB$ for some $C>0$, and $A\sim B$,  if $A\lesssim B$ and $B\lesssim A$. For a matrix $A\in\R^{N\times N}$ we write $A=(a_{ij})_{i,j=1}^{N}$ and, given  two matrices $A,B$, we set $A:B:=\sum_{i,j=1}^{N}a_{ij}b_{ij}.$ The subspace of $L^{1}$-functions with  bounded total variation is $BV$ and  its norm is $\|\cdot \|_{BV}$. If $f \in BV$, $BV(f)$ is its total variation. We  write $\delta_{c}$ for  the ``Dirac mass'' measure in $L^{1}(\TT^{N})$ charging the constant function $c\in\R$ and we set $L_{c}^{1}(\TT^{N}):=\{u\in L^{1}(\TT^{N}):\, u\text{ has average }c\}$.
We set $C_{0}([0,T];\R^{N}):=\{z\in C([0,T];\R^{N}):\ z(0)=0\}$. For an open set $\mcO\subseteq\R^{N}$, $C_{c}^{1}(\mcO)$ is the space of all continuously differentiable functions with compact support in $\mcO$. If $F:[0,T]\to\R$,  $F|_{s}^{t}:=F(t)-F(s)$ for all $s,t\in[0,T]$.  In the second part of the paper, we will omit, when it does not cause any confusion, the dependence in $\omega$ and we occasionally write $m(x,\xi,s)dxd\xi ds$ instead of $dm(x,\xi,s).$ The space of homogeneous Bessel potentials,  $W^{\l,p}$ for $\l>0$, $p\in[1,\infty)$,  is 
\[
W^{\l,p}:=\{f\in L^{p}(\TT^{N}):\ (|n|^{\l}\hat{f}(n))^{\vee}\in L^{p}(\TT^{N})\},
\]
where $\hat{f}$ is the discrete Fourier transform of $f$ on $\TT^{N}$ and $f^{\vee}$ is its inverse. The homogeneous Bessel potential spaces coincide with the domains of the fractional Laplace operators $(-\D)^{\frac{\l}{2}}$ on $L^{p}(\TT^{N})$. For notational simplicity we set $H^{\l}:=W^{\l,2}$.

\section{The main results\label{sec:Main-results}}
\subsection*{Pathwise entropy solutions}\label{sub:Pathwise entropy solutions}
We  introduce here the notion of pathwise entropy solutions to \eqref{eq:scl}, which is based on the kinetic formulation of \eqref{eq:scl} and on choosing test-functions transported along the characteristics. For (hyperbolic) scalar conservation laws this  was introduced in \cite{LPS13} and it was motivated from the theory of stochastic viscosity solutions for fully nonlinear first- and second-order PDE including stochastic Hamilton-Jacobi equations developed by Lions and  Souganidis (see \cite{LS98,LS98-2,LS02,LS00-2,LS00}). The theory was extended to the spatially inhomogeneous case by Lions, Perthame and Souganidis \cite{LPS14}, Gess and Souganidis \cite{GS14-2} and Hofmanova \cite{H16}. In view of the parabolic part in \eqref{eq:scl}, the kinetic formulation has to be appropriately adapted;  in the deterministic case this was done in \cite{CP03}.

For smooth driving paths $z\in C^{1}([0,T];\R^{N})$ the well-established theories of entropy solutions and kinetic solutions apply to \eqref{eq:scl}.  In order to fix  the ideas and the notation,  it is necessary to introduce the functions
\begin{equation}
\chi(u,\xi):=\begin{cases}
+1 & \text{if }0\leq\xi\leq u,\\
-1 & \text{if }u\leq\xi\leq0,\\
0 & \text{otherwise},
\end{cases}\label{eq:char_fctn-1}
\end{equation}
and, given $u:\R^N \times [0,\infty) \to \R$,  
\begin{equation}
\chi(x,\xi,t):=\chi(u(x,t),\xi).\label{eq:char_fctn}
\end{equation}
and to  observe that 
\[
\partial_{\xi}\chi(x,\xi)=\d(\xi)-\d(\xi-u). 
\]
Finally, to simplify the notation, we use the functions 
\begin{align*}
f_{i}(\xi,t) & :=(F^{'})^{i}(\xi)\dot{z}^{i}(t).
\end{align*}
The kinetic form of \eqref{eq:scl} then is 
\begin{align}
\begin{cases}
\partial_t\chi+{\displaystyle f(\xi,t)\cdot D_{x}\chi}-\sum_{i,j=1}^{d}a_{ij}(\xi)\partial_{x_{i}x_{j}}^{2}\chi=\partial_{\xi}(m+n) \ \text{ in } \ \R^N\times\R\times [0,\infty),\\[1mm]
\chi(x,\xi,0)=\chi(u_{0}(x),\xi),
\end{cases}\label{eq:kinetic_form}
\end{align}
where the entropy defect measure $m$ and the parabolic dissipation measure $n$ are nonnegative, bounded measures in $\R^{N}\times\R\times[0,T]$  for each $T>0$. In the sequel, when we do not need to differentiate between $m$ and $n$, we set $q:=m+n$.

The notion of kinetic solutions is not well defined for rough driving signals $z$ that are merely continuous, since the coefficients $f(\xi,t)$ blow up with $\dot{z}$. On the other hand, following \cite{LPS13,LPS14,GS14-2} we observe that the linearity of \eqref{eq:kinetic_form} in $\chi$ suggests that we may use the characteristics of \eqref{eq:kinetic_form} to derive a stable notion of solutions, which will be the pathwise entropy solutions. 

We continue assuming $z\in C^{1}([0,T];\R^{N})$ and  consider the transport equation 
\begin{equation}
\begin{cases}
\partial_{t}\vr+{\displaystyle f(\xi,t)\cdot D_{x}\vr}=0\ \text{ in }\ \R^{N}\times\R\times[0,T],\\[1mm]
\vr_{0}=\vr^{0}\ \text{ on }\ \R^{N}\times\R\times\{0\}.
\end{cases}\label{eq:transport}
\end{equation}
 For each $(y,\eta)\in\R^{N+1}$ and $\vr^{0}\in C_{c}^{\infty}(\R^{N}\times\R)$,  $\vr=\vr(x,y,\xi,\eta,t)$ is the solution to \eqref{eq:transport} with 
\begin{equation}
\vr(\cdot,y,\cdot,\eta,0)=\vr^{0}(\cdot-y,\cdot-\eta)=\vr^{s,0}(\cdot-y)\vr^{v,0}(\cdot-\eta)\label{eq:vr_product_form}
\end{equation}
 and $\vr^{s,0}\in C_{c}^{\infty}(\R^{N})$, $\vr^{v,0}\in C_{c}^{\infty}(\R)$. Here, we use the superscripts $s$ and $v$ to emphasize that the functions act on the space and velocity variables respectively.
 
The convolution along characteristics  is then given by 
\[
\vr\ast\chi(y,\eta,t):=\int\vr(x,y,\xi,\eta,t)\chi(x,\xi,t)dxd\xi,
\]
and it follows from Lemma~\ref{lem:kinetic-pathwise} below that, for all $ $$(y,\eta)\in\R^{N+1}$ and in the sense of distributions in $t\in [0,T]$,
\begin{align*}
	\partial_{t}\vr\ast\chi(y,\eta,t)=
	&\sum_{i,j=1}^{N}\int a_{ij}(\xi)\chi(x,\xi,t)\partial_{x_{i}x_{j}}^{2}\vr(x,y,\xi,\eta,t)dxd\xi\\
	&-\int\partial_{\xi}\vr(x,y,\xi,\eta,t)q(x,\xi,t)dxd\xi\ \ \text{in }\R^{N}\times\R\times(0,T).
\end{align*}

The solution of \eqref{eq:transport} can be expressed in terms of the associated (backward) characteristics. Due to the spatial homogeneity of the flux $f$, the characteristics starting at $(y,\xi)$ and the solution $\vr=\vr(x,y,\xi,\eta,t)$ to \eqref{eq:transport} are given respectively by the explicit form
\[
{Y}_{(y,\eta)}^{i}(t)=y+f^{i}(\xi){z}^{i}(t),
\]
and
\begin{align}
\vr(x,y,\xi,\eta,t) 
& = \vr^{s}(x,y,\xi,t)\vr^{v,0}(\xi-\eta)\label{eq:test-functions}
 :=\vr^{s,0}(x-f(\xi){z}(t)-y)\vr^{v,0}(\xi-\eta). %\nonumber
\end{align}
Note that, in contrast to \eqref{eq:transport}, the characteristic equations are trivially well defined also for rough driving signals $z\in C_{0}([0,T];\R^{N})$. 

Since $A(u)$ is a symmetric, nonnegative matrix, it has a square root, that is there exists $\s(u)=(\s_{ij}(u))_{i,j=1}^{N}:=A(u)^{\frac{1}{2}}\in L_{loc}^{\infty}(\R;\R^{N\times N})$, such that
\[
a_{ij}(u)=\sum_{k=1}^{N}\s_{ik}(u)\s_{jk}(u).
\]
For notational convenience we write
\[
\b_{ik}'(u)=\s_{ik}(u).
\]

Next we give the definition of pathwise entropy solutions.
\begin{defn}
\label{def:path_e-soln-2}Let $u_{0}\in(L^{1}\cap L^{\infty})(\R^{N})$ and $T>0$. A function $u\in C([0,T];L^{1}(\R^{N}))\cap L^{\infty}(\R^{N}\times[0,T])$ is a pathwise entropy solution to \eqref{eq:scl}, if 
\begin{enumerate}
\item[(i)]
for all $k\in\{1,\dots,N\}$
\begin{equation}
\sum_{i=1}^{N}\partial_{x_{i}}\b_{ik}(u)\in L^{2}(\R^{N} \times [0,T]),\label{eq:b-regularity}
\end{equation}
\item[(ii)] for all $(y,\eta)\in\R^{N+1}$ and all  $k\in\{1,\dots,N\}$
\begin{equation}
\sum_{i=1}^{N}\int_{\R^{N+1}}\chi(x,\xi,t)\sigma_{ik}(\xi)\partial_{x_{i}}\vr(x,y,\xi,\eta,t)dxd\xi=-\sum_{i=1}^{N}\int_{\R^{N}}\partial_{x_{i}}\b_{ik}(u(x,t))\vr(x,y,u(x,t),\eta,t)dx,\label{eq:chain-rule}
\end{equation}
\item[(iii)] there exists a non-negative bounded measure $m$ on $\R^{N}\times\R\times[0,T]$ such that, for all test functions $\vr$ given by \eqref{eq:transport} with $\vr^{0}\in C_{c}^{\infty}(\R^{N+1})$ as in \eqref{eq:vr_product_form}, for all $(y,\eta)\in\R^{N+1}$ and in the sense of distributions in $t\in[0,T]$,
\begin{equation}\label{eq:kinetic}
\frac{d}{dt}(\vr\ast\chi)(y,\eta,t)=  \sum_{i,j=1}^{N}\int\chi(x,\xi,t)a_{ij}(\xi)\partial_{x_{i}x_{j}}^{2}\vr(x,y,\xi,\eta,t)dxd\xi  -\int\partial_{\xi}\vr(x,y,\xi,\eta,t)(m+n)(x,\xi,t)dxd\xi,
\end{equation}
where $n$ is the non-negative measure on $\R^{N}\times\R\times[0,T]$ given by
\begin{equation}\label{eq:diss_meas}
n(x,\xi,t):=\d(\xi-u(t,x))\sum_{k=1}^{N}(\sum_{i=1}^{N}\partial_{x_{i}}\b_{ik}(u(t,x)))^{2}. 
\end{equation}
\end{enumerate}
\end{defn}
In the sequel, we refer to \eqref{eq:b-regularity} and \eqref{eq:chain-rule} as the regularity and chain rule properties respectively.

When $A\equiv0$, that is for the hyperbolic problem,  \eqref{eq:kinetic} alone was used as the definition of a pathwise entropy solution in \cite{LPS13}. In the presence of $A$ it is, however, necessary to also include the chain-rule \eqref{eq:chain-rule} in the definition, a fact already observed in the deterministic case in \cite{CP03}. 

The definition can be reformulated as follows: If $u_{0},m$ and $n$ are  as in Definition \ref{def:path_e-soln-2},  then $u\in C([0,T];L^{1}(\R^{N}))\cap L^{\infty}([0,T]\times\R^{N})$  is a pathwise entropy solution to \eqref{eq:scl} if and only if, instead of \eqref{eq:kinetic}, for all $(y,\eta)\in\R^{N+1},s\le t$ and all test functions $\vr$ given by \eqref{eq:transport} with $\vr^{0}\in C_{c}^{\infty}(\R^{N+1})$, 
\begin{equation}\label{eq:gen_kinetic_integrated-1-2}
\begin{cases}
\vr\ast\chi(y,\eta,t)-\vr\ast\chi(y,\eta,s)=& \sum_{i,j=1}^{N}\int_{s}^{t}\int\chi(x,\xi,r)a_{ij}(\xi)\partial_{x_{i}x_{j}}^{2}\vr(x,y,\xi,\eta,s)d\xi dxdr\\
&  -\int_{s}^{t}\int\partial_{\xi}\vr(x,y,\xi,\eta,r)(m+n)(x,\xi,r)dxd\xi dr.
\end{cases}
\end{equation}
\no Next we show that, for smooth paths, the notions of kinetic and pathwise entropy solutions are equivalent. For the convenience of the reader we recall from \cite{CP03} the definition of the kinetic solution in Appendix~\ref{takis kinetic100}. 
\begin{lem}
\label{lem:kinetic-pathwise}Assume that $z\in C^{1}_0([0,T];\R^{N})$ and $u\in C([0,T];L^{1}(\R^{N}))\cap L^{\infty}([0,T]\times\R^{N})$. Then $u$ is a pathwise entropy solution to \eqref{eq:scl} if and only if it is a kinetic solution.
\end{lem}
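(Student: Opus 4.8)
The plan is to show the two implications separately, since both directions hinge on relating the PDE-level identities in the definitions to the convolution-along-characteristics identity \eqref{eq:kinetic}. For smooth $z\in C^1_0$, the flux $f(\xi,t)=F'(\xi)\dot z(t)$ is a bona fide $L^\infty_{loc}$ coefficient, so the transport equation \eqref{eq:transport} is classically solvable and its solution is the explicit pushforward along characteristics given in \eqref{eq:test-functions}. The essential computation is to differentiate $\vr\ast\chi(y,\eta,t)=\int \vr(x,y,\xi,\eta,t)\chi(x,\xi,t)\,dxd\xi$ in $t$ using the product rule: the term coming from $\partial_t\vr$ is, by \eqref{eq:transport}, exactly $-\int f(\xi,t)\cdot D_x\vr\,\chi\,dxd\xi$, which after integration by parts in $x$ cancels the transport term $\int \vr\, f(\xi,t)\cdot D_x\chi\,dxd\xi$ produced by the kinetic equation \eqref{eq:kinetic_form}. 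What remains is precisely the right-hand side of \eqref{eq:kinetic}, namely the second-order term $\sum_{ij}\int \chi\, a_{ij}(\xi)\partial^2_{x_ix_j}\vr\,dxd\xi$ (after two integrations by parts in $x$) and the measure term $-\int \partial_\xi\vr\,(m+n)\,dxd\xi$.

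For the direction ``kinetic $\Rightarrow$ pathwise'' I would start from a kinetic solution $u$ with defect measures $m,n$ as in \cite[Definition 2.2]{CP03} (recalled in Appendix~\ref{takis kinetic100}), verify that $n$ has the explicit form \eqref{eq:diss_meas} and that the regularity \eqref{eq:b-regularity} and chain rule \eqref{eq:chain-rule} hold — these are part of, or immediate consequences of, the Chen--Perthame definition. Then, testing the kinetic equation \eqref{eq:kinetic_form} against $\vr(\cdot,y,\cdot,\eta,t)$ for fixed $(y,\eta)$ and performing the cancellation described above yields \eqref{eq:kinetic}, hence (i)--(iii) of Definition~\ref{def:path_e-soln-2}. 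For the converse, given a pathwise entropy solution, one recovers the kinetic formulation by undoing the convolution: since \eqref{eq:kinetic} holds for all product test functions $\vr^0=\vr^{s,0}\otimes\vr^{v,0}$ and all translation parameters $(y,\eta)$, and such functions are dense, the distributional identity \eqref{eq:kinetic_form} follows, and the chain-rule property \eqref{eq:chain-rule} together with the regularity \eqref{eq:b-regularity} guarantees that the parabolic terms are interpreted correctly — in particular that $n$ given by \eqref{eq:diss_meas} is well-defined and matches the parabolic dissipation measure of the kinetic solution. One must also check the initial condition: this follows from $u\in C([0,T];L^1)$ together with continuity in $t$ of $\vr\ast\chi$ read off from the integrated form \eqref{eq:gen_kinetic_integrated-1-2}.

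The main obstacle is the careful bookkeeping of the second-order term. In \eqref{eq:kinetic} the coefficient $a_{ij}(\xi)$ sits outside the $x$-derivatives acting on $\vr$, whereas in \eqref{eq:kinetic_form} it sits on $\chi$; moving it requires integrating by parts twice in $x$, and because $\chi$ is only $BV$-regular in $x$ (through $u$), the manipulation has to be justified in the distributional sense, using precisely the chain-rule identity \eqref{eq:chain-rule} to make sense of $\sum_i\partial_{x_i}\b_{ik}(u)$ and hence of the measure $n$. This is exactly the point flagged after the definition as the reason \eqref{eq:chain-rule} must be included when $A\not\equiv 0$. I would handle it by first establishing \eqref{eq:chain-rule} for the kinetic solution from the regularity of $\b(u)$ in the Chen--Perthame framework, and then using it as the bridge in both directions; the remaining estimates (measure bounds, density of test functions, passage to the integrated form) are routine.
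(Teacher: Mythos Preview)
Your approach for the direction ``kinetic $\Rightarrow$ pathwise'' matches the paper's: test the kinetic equation against the transported $\vr$, and the transport terms cancel by construction. One point where the paper is more explicit than your sketch: the chain rule \eqref{eq:chain-rule} is not quite an ``immediate consequence'' of the Chen--Perthame definition; the paper derives it from \eqref{takis cp2} by choosing $\sqrt{\psi(\cdot)}=\vr^{v,0}(\cdot-\eta)\vr_\delta(\cdot-\xi)$ with a Dirac family $\vr_\delta$, multiplying by $\vr^{s}(x,y,\xi,t)$, integrating in $x$ and $\xi$, and letting $\delta\to0$. This is routine but worth spelling out.

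The converse direction is where you diverge from the paper. You propose to recover \eqref{eq:kinetic_form} directly by a density argument: the transported test functions $\vr$ (varying $(y,\eta)$ and the seeds $\vr^{s,0},\vr^{v,0}$), after the change of variable $x\mapsto x-f(\xi)z(t)$, generate tensor products which are dense in $C_c^\infty$. This is workable, though you would also need to go back from the pathwise chain rule \eqref{eq:chain-rule} to the full Chen--Perthame chain rule \eqref{takis cp2} for arbitrary $\psi_1,\psi_2$, and to verify that the explicit formula \eqref{eq:diss_meas} for $n$ satisfies \eqref{takis cp3}; both are doable for $u\in L^\infty$ but add work.

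The paper instead short-circuits all of this: given a pathwise entropy solution $u$, it invokes \cite{CP03} to produce a kinetic solution $\tilde u$ with the same data, observes (by the direction already proved) that $\tilde u$ is also a pathwise entropy solution, and then applies the uniqueness Theorem~\ref{thm:unique} (established later) to conclude $u=\tilde u$. This avoids any density or chain-rule reconstruction at the cost of a forward reference to the uniqueness result. Your direct route is self-contained and would make the lemma logically independent of Theorem~\ref{thm:unique}; the paper's route is shorter but ties the equivalence to the well-posedness machinery.
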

\begin{proof}
Let $u$ be a kinetic solution to \eqref{eq:scl}. Since $u\in L^{\infty}([0,T]\times \R^{N})$, we may take $\psi\equiv1$ in \eqref{takis cp1} and $\psi_{2}\equiv1$ in \eqref{takis cp2}. 

Then, for all $\vp\in C_{c}^{1}(\R^{N}\times\R\times(0,T)),$ 
\[
\int_{0}^{T}\int\chi\big(\partial_{t}\vp+f(\xi,r)\cdot D_{x}\vp+\sum_{i,j=1}^{N}a_{ij}(\xi)\partial_{x_{i}x_{j}}^{2}\vp\big)d\xi dxdr=\int_{0}^{T}\int q\partial_{\xi}\vp d\xi dxdr.
\]
Since $u\in C([0,T];L^{1}(\R^{N}))$ it follows that, for all $s\le t$ and all $\vp\in C_{c}^{1}(\R^{N}\times\R\times[0,T]),$ 
\begin{align*}
 & \int_{s}^{t}\int\chi(x,\xi,r)\big(\partial_{t}\vp(x,\xi,r)+f(\xi,r)\cdot D_{x}\vp(x,\xi,r)+\sum_{i,j=1}^{N}a_{ij}(\xi)\partial_{x_{i}x_{j}}^{2}\vp(x,\xi,r)\big)d\xi dxdr\\
 & -\int\chi(x,\xi,t)\vp(x,\xi,t)d\xi dx+\int\chi(x,\xi,s)\vp(x,\xi,s)d\xi dx=\int_{s}^{t}\int q\partial_{\xi}\vp d\xi dxdr.
\end{align*}
Choosing $\vp=\vr$ yields 
\begin{align*}
 & -\int\chi(x,\xi,\cdot)\vr(x,y,\xi,\eta,\cdot)d\xi dx\Big|_{s}^{t}+\sum_{i,j=1}^{N}\int_{s}^{t}\int\chi(x,\xi,r)a_{ij}(\xi)\partial_{x_{i}x_{j}}^{2}\vr(x,y,\xi,\eta,s)d\xi dxdr=\int_{s}^{t}\int q\partial_{\xi}\vr d\xi dxdr,
\end{align*}
which in view of \eqref{eq:gen_kinetic_integrated-1-2} implies that $\chi$ is pathwise entropy solution to \eqref{eq:scl}. 

Next we derive the chain-rule. It follows from \eqref{takis cp2} that, for all $\psi\in C_{c}^{\infty}$, 
\begin{equation}
\sqrt{\psi(u(x,t))}\sum_{i=1}^{N}\partial_{x_{i}}\b_{ik}(u(x,t))=\sum_{i=1}^{N}\partial_{x_{i}}\b_{ik}^{\psi}(u(x,t))\quad\text{a.e. in }(t,x).\label{eq:det_chain}
\end{equation}
where 
\begin{equation}\label{eq:b_psi}
  (\b_{ik}^{\psi})'(u)=\s_{ik}(u)\sqrt{\psi(u)},
\end{equation}
and thus
\[
\b_{ik}^{\psi}(u(x,t))=\int\chi(x,\xi,t)\s_{ik}(\xi)\sqrt{\psi(\xi)}d\xi.
\]
Choose $\sqrt{\psi(\cdot)}:=\vr^{v,0}(\cdot-\eta)\vr_{\d}(\cdot-\xi)$ in \eqref{eq:det_chain}, where $\vr_{\d}$ is a standard Dirac sequence, multiply the resulting equation by $\vr^{s}(x,y,\xi,t)=\vr^{s,0}(x-y+f(\xi)z(t))$ and integrate in $x$ and $\xi$ to get
\begin{align*}
 & \sum_{i=1}^{N}\int\vr^{v}(u(x,t)-\eta)\vr_{\d}(u(x,t)-\xi)\partial_{x_{i}}\left(\int\s_{ik}(\tau)\chi(x,\tau,t)d\tau\right)\vr^{s}(x,y,\xi,t)dxd\xi\\
 & =\sum_{i=1}^{N}\int\partial_{x_{i}}\left(\int\s_{ik}(\tau)\vr^{v}(\tau-\eta)\vr_{\d}(\tau-\xi)\chi(x,\tau,t)d\tau\right)\vr^{s}(x,y,\xi,t)dxd\xi\\
 & =-\sum_{i=1}^{N}\int\left(\int\s_{ik}(\tau)\vr^{v}(\tau-\eta)\vr_{\d}(\tau-\xi)\chi(x,\tau,t)d\tau\right)\partial_{x_{i}}\vr^{s}(x,y,\xi,t)dxd\xi.
\end{align*}
Letting  $\d\to0$ we obtain 
\begin{align*}
 & \sum_{i=1}^{N}\int\vr^{v}(u(x,t)-\eta)\partial_{x_{i}}\left(\int\s_{ik}(\tau)\chi(x,\tau,t)d\tau\right)\vr^{s}(x,y,u(x,t),t)dx\\
 & =-\sum_{i=1}^{N}\int \s_{ik}(\xi)\vr^{v}(\xi-\eta)\chi(x,\xi,t)\partial_{x_{i}}\vr^{s}(x,y,\xi,t)dxd\xi,
\end{align*}
and, thus,
\begin{align*}
 & \sum_{i=1}^{N}\int\partial_{x_{i}}\b_{ik}(u(x,t))\vr(x,y,u(x,t),\eta,t)dx=-\sum_{i=1}^{N}\int\int\s_{ik}(\xi)\chi(x,\xi,t)\partial_{x_{i}}\vr(x,y,\xi,\eta,t)dxd\xi.
\end{align*}

Let now $u$ be a pathwise entropy solution to \eqref{eq:scl}. It follows from \cite{CP03} that there exists a kinetic solution $\td u$ to \eqref{eq:scl}, which, in view of the above, is also a pathwise entropy solution. Since, by  Theorem~\ref{thm:unique} below, pathwise entropy solutions are unique, it follows that $u=\td u$ and hence $u$ is a kinetic solution. 
\end{proof}

\subsection*{The well-posedness of pathwise entropy solutions} The first result here is the uniqueness and stability of pathwise entropy solutions with respect to the initial condition and driving paths.

\begin{thm}\label{thm:unique} Assume \eqref{takis data1} and \eqref{takis data2} and let $u^{(1)},u^{(2)}\in L^{\infty}([0,T];BV(\R^{N}))$ be two pathwise entropy solutions to \eqref{eq:scl} with driving signals $z^{(1)},z^{(2)}\in C_{0}([0,T];\R^{N})$ and initial values $u_{0}^{1},u_{0}^{2}\in(L^{\infty} \cap BV)(\R^{N})$. Then, for all $s\le t\in[0,T]$, there exists $C>0$, which may depend on $\|u^{(1)}\|_{L^{\infty}([s,t];(L^{\infty}\cap BV)(\R^{N}))}$ and  $\|u^{(2)}\|_{L^{\infty}([s,t];(L^{\infty}\cap BV)(\R^{N}))}$,
such that  
\begin{equation}\label{eq:gen_kinetic_unique-2}
\left\{\begin{aligned}
\|u^{(1)}(t)-u^{(2)}(t)\|_{L^{1}(\R^{N})}\le&  \|u^{(1)}(s)-u^{(2)}(s)\|_{L^{1}(\R^{N})}+C \|z^{(1)}-z^{(2)}\|_{C([s,t];\R^{N})}^{1/2}
 \\&+C\|z^{(1)}-z^{(2)}\|_{C([s,t];\R^{N})}.%\nonumber 
\end{aligned}\right.
\end{equation}
\end{thm}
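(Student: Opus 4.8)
The plan is to adapt the doubling-of-variables method, but performed at the kinetic level via the convolution-along-characteristics functionals $\vr\ast\chi$, exactly in the spirit of \cite{LPS13,LPS14,GS14-2}, with the extra parabolic terms handled through the chain rule \eqref{eq:chain-rule}. First I would fix two pathwise entropy solutions $u^{(1)},u^{(2)}$ with signals $z^{(1)},z^{(2)}$ and, for the first solution, use test functions built from the characteristics of $z^{(1)}$, and for the second, from those of $z^{(2)}$; that is, I would work with $\vr^{(1)}(x,y,\xi,\eta,t)=\vr^{s,0}(x-f(\xi)z^{(1)}(t)-y)\vr^{v,0}(\xi-\eta)$ and $\vr^{(2)}(x',y,\xi',\eta,t)=\bar\vr^{s,0}(x'-f(\xi')z^{(2)}(t)-y)\vr^{v,0}(\xi'-\eta)$, take $\vr^{s,0},\bar\vr^{s,0}$ to be approximate identities at spatial scale $\rho$ and $\vr^{v,0}$ an approximate identity at velocity scale $\zeta$, and test the first kinetic equation \eqref{eq:gen_kinetic_integrated-1-2} against a quantity involving $\chi^{(2)}$ and the second against one involving $\chi^{(1)}$. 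The standard computation gives
\begin{equation}\label{eq:plan-main}
\frac{d}{dt}\iint Q^{(1)}(z^{(1)}(t))Q^{(2)}(z^{(2)}(t))\,\chi^{(1)}\chi^{(2)} = \text{(parabolic terms)} + \text{(measure terms)} + \text{(signal-mismatch terms)},
\end{equation}
where the measure terms coming from $m^{(1)},m^{(2)}$ have a good sign after integrating by parts in $\xi$ against $\partial_\xi\vr^{v,0}$, the signal-mismatch terms are controlled by $\|z^{(1)}-z^{(2)}\|_{C}$ times derivatives of the test functions, and the parabolic contributions must be shown to combine into something non-positive up to the dissipation measures $n^{(1)},n^{(2)}$.

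The key steps in order would be: (1) write the interaction identity for the product functional and expand, keeping track of which terms are at spatial scale $\rho$ and which at velocity scale $\zeta$; (2) treat the pure second-order terms $\sum a_{ij}(\xi)\partial^2_{x_ix_j}\vr$ — this is where the chain rule \eqref{eq:chain-rule} is essential, since one needs to convert $\int\chi^{(1)}a^{(1)}_{ij}\partial^2_{x_ix_j}\vr^{(1)}$ into a form involving $\partial_{x_i}\b_{ik}(u^{(1)})$ so that, together with the analogous term for $u^{(2)}$ and the cross terms, one produces a perfect square $-\,\delta(u^{(1)}-u^{(2)})|\sum_i\partial_{x_i}(\b_{ik}(u^{(1)})-\b_{ik}(u^{(2)}))|^2$ modulo commutator errors; the dissipation measures $n^{(1)},n^{(2)}$ defined in \eqref{eq:diss_meas} are exactly what is needed to absorb the diagonal pieces; (3) estimate all commutator errors (from moving $a_{ij}(\xi)$ past the mollifier, from the non-isotropy, and from the velocity regularization) in terms of $\rho,\zeta$ using the $BV\cap L^\infty$ bounds on $u^{(1)},u^{(2)}$ — this is where the hypothesis $u^{(i)}\in L^\infty([0,T];BV)$ and $F\in C^2_{loc}$ with $F'(0)=0$ enter, and it is the step requiring the additional approximation and localization alluded to in the introduction; (4) estimate the signal-mismatch terms: differences of the characteristics evaluated along $z^{(1)}$ versus $z^{(2)}$ are bounded by $\|F'\|_\infty\|z^{(1)}-z^{(2)}\|_C$, and since these hit one spatial derivative of the mollifier they cost a factor $\rho^{-1}$, giving a contribution of order $\|z^{(1)}-z^{(2)}\|_C/\rho$ (plus a harmless $\|z^{(1)}-z^{(2)}\|_C$ from lower-order pieces); (5) integrate in $t$ from $s$ to $t$, pass to the limit removing the velocity regularization $\zeta$, which turns $\iint\chi^{(1)}\chi^{(2)}\vr^{v,0}$ into (minus) the spatial mollification of $|u^{(1)}-u^{(2)}|$; (6) optimize the diffusion/mollification scale $\rho$ against the signal error: balancing $\rho$ (the mollification error in $L^1$) against $\|z^{(1)}-z^{(2)}\|_C/\rho$ yields the choice $\rho\sim\|z^{(1)}-z^{(2)}\|_C^{1/2}$ and hence the rate $\|z^{(1)}-z^{(2)}\|_C^{1/2}$ in \eqref{eq:gen_kinetic_unique-2}, with the linear term absorbing the remaining lower-order errors; taking $z^{(1)}=z^{(2)}$ recovers the $L^1$-contraction, proving uniqueness.

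I expect the main obstacle to be step (2)–(3): showing that the second-order terms, after applying the chain rule and symmetrizing, genuinely assemble into a non-positive quantity up to controllable errors. In the deterministic case \cite{CP03} this is done with test functions depending on $x-x'$ and $\xi-\xi'$; here the test functions are transported along \emph{different} characteristics for the two solutions, so the cancellation of the parabolic cross-terms is no longer exact and one must carefully exploit that the mismatch between the two characteristic shifts is itself $O(\|z^{(1)}-z^{(2)}\|_C)$, i.e. the parabolic error and the hyperbolic signal error are not independent. Managing the interplay of the three small parameters — spatial scale $\rho$, velocity scale $\zeta$, and signal difference $\|z^{(1)}-z^{(2)}\|_C$ — while keeping the parabolic dissipation on the correct side of the inequality, and doing the attendant localization in $x$ (since the problem is on all of $\R^N$) and in $\xi$ (using $F'(0)=0$ so that the flux vanishes at $\xi=0$), is the technically demanding heart of the argument; the rest is the now-classical kinetic bookkeeping.
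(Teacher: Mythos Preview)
Your proposal is correct and follows essentially the same approach as the paper: convolution along characteristics with each signal $z^{(i)}$, doubling with spatial scale $\ve$ (your $\rho$) and velocity scale $\d$ (your $\zeta$), a localization $\psi_R$ in $(y,\eta)$, the chain rule \eqref{eq:chain-rule} to assemble the parabolic terms into a non-positive quantity absorbed by $n^{(1)},n^{(2)}$, and the final optimization $\ve\sim\|z^{(1)}-z^{(2)}\|_C^{1/2}$.

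One clarification on the obstacle you flag in steps (2)--(3): the parabolic error is in fact \emph{decoupled} from the signal mismatch. In the paper the commutator $Err^{par}$, built from $a_{ij}(\xi)-2\sum_k\sigma_{ik}(\xi)\sigma_{kj}(\xi')+a_{ij}(\xi')$, vanishes purely as the velocity scale $\d\to0$ (since $\xi,\xi'$ collapse to $\eta$), and the different characteristic shifts in $\vr^{(1)},\vr^{(2)}$ cause no difficulty in $I^{par}$ because the chain rule \eqref{eq:chain-rule} is applied to each solution with its \emph{own} test function, after which the Cauchy--Schwarz step $(\sum_i\partial_{x_i}\b_{ik}(u^{(1)}))^2+(\sum_j\partial_{x_j}\b_{jk}(u^{(2)}))^2\ge 2\sum_{i,j}\partial_{x_i}\b_{ik}(u^{(1)})\partial_{x_j}\b_{jk}(u^{(2)})$ is pointwise and insensitive to the shifts. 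The signal difference $z^{(1)}-z^{(2)}$ enters only through the hyperbolic-type errors $Err^{(1,2)}$ and $Err^{(i)}$, bounded by $\ve^{-1}\|z^{(1)}-z^{(2)}\|_C$ exactly as in your step (4). Also note that the $\d(\xi)$-contribution from $\partial_\xi\chi=\d(\xi)-\d(\xi-u)$ does not automatically have a good sign; the paper cancels it by including the terms $\int\sgn(\eta)(\chi^{(i)}\ast\vr^{(i)})$ in the approximating functional $G_\ve$, which you should make explicit.
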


The next result is about the  existence of pathwise entropy solutions, which, in view of Theorem~\ref{thm:unique}, are unique.

\begin{thm}\label{thm:existence} (i) Assume \eqref{takis data1} and \eqref{takis data2} and let $u_{0}\in(L^{\infty}\cap BV)(\R^{N})$ and  $z\in C_{0}([0,T];\R^{N})$. Then there exists a pathwise entropy solution $u$ to \eqref{eq:scl} satisfying, for all $p\in[1,\infty]$, 
\begin{align}
  \|u\|_{L^{\infty}([0,T];L^{p}(\R^{N}))}\le\|u_{0}\|_{p},\label{energy est1}\\
  \|u\|_{L^{\infty}([0,T];BV(\R^{N}))}  \le\|u_{0}\|_{BV},  \label{energy est2}
\end{align}
and
\begin{align}
\int_{\R^{N}\times\R\times\R_{+}}m(x,\xi,t)dxd\xi dt+\int_{\R^{N}\times\R_{+}}\sum_{k=1}^{N}\left(\sum_{i=1}^{N}\partial_{x_{i}}\b_{ik}(u)\right)^{2} & dxdt\le\frac{1}{2}\|u_{0}\|_{2}^{2}.\label{eq:measure_par_est-1}
\end{align}
Moreover, for $p\in(-1,\infty)$ and $t\geq0$, \textbf{} 
\begin{align}
\|u(\cdot,t)\|_{p+2}^{p+2} & +(p+2)(p+1)\int_{0}^{t}\int_{{\R^{N}}\times\R}|\xi|^{p}d(m+n)(x,\xi,s)=\|u_{0}\|_{p+2}^{p+2}.\label{eq:measure_bound}
\end{align}
(ii)~Assume \eqref{takis data1} and \eqref{takis data2} and let $u_{0}\in(L^{\infty}\cap L^1)(\R^{N})$ and $z\in C_{0}([0,T];\R^{N})$. Then there exists a pathwise entropy solution $u$ to \eqref{eq:scl} satisfying \eqref{energy est1} for all $p\in[1,\infty]$,  \eqref{eq:measure_par_est-1} and \eqref{eq:measure_bound} for $p\in(-1,\infty)$ and $t\geq0$.

\end{thm}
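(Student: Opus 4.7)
The plan for part (i) is a vanishing rough-path regularization. Approximate $z\in C_{0}([0,T];\R^{N})$ by a sequence $z^{(l)}\in C^{1}_{0}([0,T];\R^{N})$ with $z^{(l)}\to z$ uniformly on $[0,T]$, and, for each $l$, invoke the Chen--Perthame existence theory \cite{CP03} to obtain a kinetic solution $u^{(l)}\in C([0,T];L^{1}(\R^{N}))\cap L^{\infty}(\R^{N}\times[0,T])$ of \eqref{eq:scl} with $z$ replaced by $z^{(l)}$, together with nonnegative measures $m^{(l)}$ and $n^{(l)}$ (the latter being given by \eqref{eq:diss_meas} applied to $u^{(l)}$). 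Lemma \ref{lem:kinetic-pathwise} then promotes each $u^{(l)}$ to a pathwise entropy solution with the smooth driver $z^{(l)}$. The classical contraction and renormalization arguments for the kinetic formulation (working with smooth drivers) yield \eqref{energy est1}, \eqref{energy est2}, and \eqref{eq:measure_par_est-1} uniformly in $l$. The identity \eqref{eq:measure_bound} is obtained, at the approximate level, by testing the kinetic form \eqref{eq:kinetic_form} against $(p+2)|\xi|^{p}\mathrm{sgn}(\xi)$: the transport term integrates out because $F'(0)=0$ and because the test function is odd, while the parabolic contribution regroups into $\|u^{(l)}\|_{p+2}^{p+2}$ and into the mass of $|\xi|^{p}d(m^{(l)}+n^{(l)})$; for $p\in(-1,0)$ this is justified by truncating $|\xi|^{p}\chi_{\{|\xi|\ge\varepsilon\}}$, using the bound \eqref{eq:measure_par_est-1} and the local finiteness of $m^{(l)}+n^{(l)}$ near $\xi=0$, and letting $\varepsilon\downarrow 0$.

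Armed with these uniform bounds I apply Theorem \ref{thm:unique} to the pair $(u^{(l)},u^{(m)})$, noting that the constant $C$ in \eqref{eq:gen_kinetic_unique-2} is controlled by the uniform BV bound \eqref{energy est2}. Since $\{z^{(l)}\}$ is Cauchy in $C([0,T];\R^{N})$, the stability estimate forces $\{u^{(l)}\}$ to be Cauchy in $C([0,T];L^{1}(\R^{N}))$, so $u^{(l)}\to u$ in this topology. The bounds \eqref{energy est1} and \eqref{energy est2} pass to the limit by lower semicontinuity, as do the first term in \eqref{eq:measure_par_est-1} (by weak-$*$ compactness of nonnegative bounded measures, producing a limiting $m$) and the second term (because $\sum_{i}\partial_{x_{i}}\b_{ik}(u^{(l)})$ is bounded in $L^{2}(\R^{N}\times[0,T])$, hence converges weakly to some $v_{k}\in L^{2}$). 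Along a subsequence, $u^{(l)}\to u$ almost everywhere, which identifies $v_{k}=\sum_{i}\partial_{x_{i}}\b_{ik}(u)$ and yields the regularity property \eqref{eq:b-regularity}. To pass to the limit in the defining identity \eqref{eq:gen_kinetic_integrated-1-2} at the approximate level, observe that the explicit representation \eqref{eq:test-functions} makes the test functions $\vr^{(l)}$ converge smoothly, together with their $x$-derivatives, to $\vr$ on the support of $\vr^{v,0}$, because the characteristics depend continuously on $z$. Combined with the strong convergence of $\chi^{(l)}=\chi(u^{(l)},\cdot)$ in $L^{1}_{loc}$ and the weak convergence of $m^{(l)},n^{(l)}$, this gives \eqref{eq:gen_kinetic_integrated-1-2} and hence \eqref{eq:kinetic} for $u$. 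The validity of \eqref{eq:measure_bound} in the limit, for $p>-1$, follows by lower semicontinuity in the measure term and continuity of the $L^{p+2}$ norm for $p+2>1$; the lower semicontinuity inequality is then upgraded to equality by testing, as above, with regularized versions of $|\xi|^{p}\mathrm{sgn}(\xi)$ directly on \eqref{eq:kinetic}.

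The chain rule \eqref{eq:chain-rule} is where I expect the main obstacle, because it couples a weakly convergent gradient $\sum_{i}\partial_{x_{i}}\b_{ik}(u^{(l)})\rightharpoonup \sum_{i}\partial_{x_{i}}\b_{ik}(u)$ in $L^{2}$ with the nonlinear evaluation $\vr(\cdot,y,u^{(l)}(\cdot),\eta,\cdot)$. The identity holds for each $u^{(l)}$ thanks to the smooth-driver analysis in the proof of Lemma \ref{lem:kinetic-pathwise} (relation \eqref{eq:det_chain}). The product structure \eqref{eq:vr_product_form} of the test function is crucial: it allows one to pair a strongly convergent scalar factor, $\vr^{s,0}(x-f(u^{(l)}(x,t))z^{(l)}(t)-y)\vr^{v,0}(u^{(l)}(x,t)-\eta)$ (strong convergence by continuity of $f$ and $F'(0)=0$, together with $u^{(l)}\to u$ a.e., uniform $L^{\infty}$ bounds, and $z^{(l)}\to z$), with the weakly convergent $L^{2}$ factor $\sum_{i}\partial_{x_{i}}\b_{ik}(u^{(l)})$. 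This strong--weak pairing passes to the limit and recovers \eqref{eq:chain-rule} for $u$.

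Finally, for part (ii), given $u_{0}\in(L^{\infty}\cap L^{1})(\R^{N})$, approximate it by $u_{0}^{(l)}\in(L^{\infty}\cap BV)(\R^{N})$, say via mollification and truncation, with $\|u_{0}^{(l)}\|_{L^{\infty}}\le\|u_{0}\|_{L^{\infty}}$, $\|u_{0}^{(l)}\|_{L^{1}}\le\|u_{0}\|_{L^{1}}$, and $u_{0}^{(l)}\to u_{0}$ in $L^{1}$. Apply part (i) to each $u_{0}^{(l)}$ with the same driver $z$ to obtain pathwise entropy solutions $u^{(l)}$, and then invoke Theorem \ref{thm:unique} with $z^{(1)}=z^{(2)}=z$, which reduces \eqref{eq:gen_kinetic_unique-2} to the $L^{1}$-contraction $\|u^{(l)}-u^{(m)}\|_{L^{\infty}_{t}L^{1}_{x}}\le\|u_{0}^{(l)}-u_{0}^{(m)}\|_{L^{1}}$. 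The resulting Cauchy property in $C([0,T];L^{1}(\R^{N}))$ yields a limit $u$, and the same compactness and weak/strong convergence machinery used in part (i) recovers the regularity \eqref{eq:b-regularity}, the chain rule \eqref{eq:chain-rule}, the kinetic identity \eqref{eq:kinetic}, and the bounds \eqref{energy est1}, \eqref{eq:measure_par_est-1}, and \eqref{eq:measure_bound}, completing the proof.
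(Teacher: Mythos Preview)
Your proposal is correct and follows the same overall architecture as the paper: regularize the rough signal $z$ by smooth $z^{(l)}$, obtain pathwise entropy solutions $u^{(l)}$ for each $l$, use the stability estimate of Theorem~\ref{thm:unique} (with the uniform $BV$ bound controlling the constant $C$) to show the $u^{(l)}$ are Cauchy in $C([0,T];L^{1})$, and pass to the limit in the three ingredients of Definition~\ref{def:path_e-soln-2}.

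The one substantive difference is in how the approximants $u^{(l)}$ are produced. You invoke Chen--Perthame \cite{CP03} as a black box for each smooth driver, whereas the paper inserts two further layers of regularization: the initial datum is mollified to $u_{0}^{\delta}\in C_{c}^{\infty}$, and the diffusion is replaced by $A^{\varepsilon}+\varepsilon I$, so that Volpert--Hudjaev \cite{VH69} furnishes classical solutions $u^{(\varepsilon,\delta,l)}$ with explicit $L^{p}$, $BV$, time-Lipschitz, and measure bounds; the paper then lets $\varepsilon\to0$, $l\to\infty$, $\delta\to0$ in that order. Your shortcut is legitimate provided \cite{CP03} (or the standard $L^{1}$-contraction-plus-translation argument) supplies the uniform $BV$ bound you need to feed into Theorem~\ref{thm:unique}; the paper's longer route has the advantage of being self-contained and of making the a~priori estimates transparent at the classical level. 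One point you (and the paper) leave implicit deserves mention: the weak-$\ast$ limit of the parabolic dissipation measures $n^{(l)}$ need not coincide with the $n$ defined by \eqref{eq:diss_meas} for the limit $u$, because the square of a weakly-$L^{2}$-convergent sequence is only lower semicontinuous; the nonnegative defect is simply absorbed into the entropy defect measure $m$, which is harmless since only the sum $m+n$ enters \eqref{eq:kinetic}. Your strong--weak pairing argument for the chain rule is in fact more explicit than what the paper writes.
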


It follows easily from the arguments leading to Theorem \ref{thm:existence} that the well-posedness theory for \eqref{eq:scl} extends to \eqref{eq:scl} set on the torus $\TT^{N}$. 

\begin{prop}
 Assume \eqref{takis data1} and \eqref{takis data2} and let $u_{0}\in(L^{\infty}\cap BV)(\TT^{N})$ and  $z\in C_{0}([0,T];\R^{N})$. Then, there exists a unique pathwise entropy solution $u\in C\big([0,\infty);L^{1}(\TT^{N})\big)\cap L^{\infty}([0,T];(L^{\infty}\cap BV)(\TT^{N}))$, for all $T>0$, and the solution operator is an $L^{1}$-contraction and, hence, is defined on $L^{1}(\TT^{N}).$
\end{prop}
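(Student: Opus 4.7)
The strategy is to reduce the proposition to the already-established $\R^N$ theory by a straightforward transcription, then use an $L^1$-contraction argument to pass to general $L^1(\TT^N)$ initial data. The key observation is that every argument used to prove Theorems \ref{thm:unique} and \ref{thm:existence} is based on local computations (kinetic formulation, convolution along characteristics, the chain-rule identity, and doubling of variables), none of which use the unboundedness of $\R^N$ in an essential way.

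First, I would transcribe Definition \ref{def:path_e-soln-2} to the torus by requiring $u \in C([0,T];L^1(\TT^N)) \cap L^\infty(\TT^N \times [0,T])$, replacing spatial integrals over $\R^N$ by integrals over $\TT^N$, restricting the spatial test-function factor $\vr^{s,0}$ to $C^\infty(\TT^N)$ (so that the characteristic shift $\vr^{s,0}(x-f(\xi)z(t)-y)$ remains well defined by periodicity), and requiring $m,n$ to be bounded nonnegative measures on $\TT^N \times \R \times [0,T]$. Uniqueness is then proved by running the doubling-of-variables and test-function-transport argument of Theorem \ref{thm:unique} verbatim; since $\TT^N$ is compact, no cut-off in the spatial variable is needed and the estimate \eqref{eq:gen_kinetic_unique-2} holds with $L^1(\R^N)$ replaced by $L^1(\TT^N)$. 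Taking $z^{(1)}=z^{(2)}$ immediately gives the $L^1$-contraction of the solution operator on $(L^\infty\cap BV)(\TT^N)$.

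For existence, I would approximate $z$ by a sequence $z^{(l)} \in C^1_0([0,T];\R^N)$ with $z^{(l)} \to z$ uniformly and solve \eqref{eq:scl} on $\TT^N$ with the smooth drivers by the classical kinetic theory of \cite{CP03} (whose arguments apply on the torus). The a priori bounds \eqref{energy est1}, \eqref{energy est2}, \eqref{eq:measure_par_est-1}, \eqref{eq:measure_bound} on the approximating sequence follow exactly as in the proof of Theorem \ref{thm:existence}, because these bounds are derived by integration against constants in $\xi$ multiplied by $|\xi|^p$-type weights and involve no boundary terms. The $L^1$-contraction estimate from the previous paragraph, applied to the smooth driver case, shows that $\{u^{(l)}\}$ is Cauchy in $C([0,T];L^1(\TT^N))$ and the limit is a pathwise entropy solution, with the chain rule \eqref{eq:chain-rule} and the regularity \eqref{eq:b-regularity} passing to the limit by lower semicontinuity and the Minty-type monotonicity argument used for Theorem \ref{thm:existence}.

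Finally, to extend the solution map to $L^1(\TT^N)$, I would approximate $u_0 \in L^1(\TT^N)$ by $u_0^{(n)} \in (L^\infty \cap BV)(\TT^N)$ with $u_0^{(n)} \to u_0$ in $L^1(\TT^N)$; the $L^1$-contraction makes the corresponding sequence of solutions Cauchy in $C([0,\infty);L^1(\TT^N))$ and the limit provides the unique pathwise entropy solution associated with $u_0$. The main obstacle is only notational/bookkeeping: one must verify that the test functions $\vr^{s,0}(x-f(\xi)z(t)-y)$ used in \eqref{eq:chain-rule} and \eqref{eq:kinetic} make sense on the torus, which is automatic once $\vr^{s,0}$ is taken periodic, and that the measures $m,n$ constructed on $\R^N$ descend to $\TT^N$-valued measures under the periodicity of $u$; these are routine modifications and the proof of the proposition proceeds essentially word for word as in the $\R^N$ case.
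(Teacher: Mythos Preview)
Your proposal is correct and matches the paper's approach: the paper gives no explicit proof of this proposition, stating only that it ``follows easily from the arguments leading to Theorem~\ref{thm:existence}.'' Your sketch spells out exactly what the paper leaves implicit, namely that the local nature of the kinetic/doubling arguments lets one transcribe Theorems~\ref{thm:unique} and~\ref{thm:existence} to $\TT^N$ (with the simplification that no spatial cut-off is needed), and then extend to $L^1(\TT^N)$ by density and the $L^1$-contraction.
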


\subsection*{The long time behavior and regularity}\label{sub:Long-time-behavior-and}
Here we assume that $z$ is a Brownian motion. We start by considering the long time behavior of \eqref{eq:scl-torus}. The claim is  that, under a genuine nonlinearity assumption for $F$ and $A$ (see \eqref{flux} below), as $t\to\infty$ and almost surely (a.s. for short) in $\o$, 
\[
u(\cdot,t;\omega,u_{0})\to\bar{u}_{0}:=\int_{\TT^{N}}u_{0}(x)dx.
\]
Moreover, we provide an estimate for the rate of convergence, we show that $\d_{\bar{u}_{0}}$ is the unique, strongly mixing invariant measure of the associated Markov semigroup and that $\bar{u}_{0}$ is the random attractor of the associated random dynamical system (see Theorem~\ref{thm:ltb} below).

\noindent Without loss of generality we consider the filtered probability space $(\O,\mcF,(\mcF_{t})_{t\in\R_{+}},\P)$ with the canonical realization of the two-sided Brownian motion on $\O=C_{0}(\R;\R^{N}):=\{b\in C(\R;\R^{N})\ \text{ and }\ b(0)=0\}$, and $\P$, $\E$, $\mcF_{t}$ and $\bar{\mcF}_{t}$ respectively the two-sided standard Gaussian measure on $\O$, the expectation with respect to $\P$, the canonical, uncompleted filtration and its completion. 

A measurable map $m:\O\to\mcM$, where $\mcM$  is the space of nonnegative bounded measures on $\TT^{N}\times\R\times[0,T]$, is a kinetic measure, if the process $t\mapsto m(\text{·},[0,t])$ with values in the space of nonnegative bounded measures on $\TT^{N}\times\R$ is $\mcF_{t}$-adapted.

A map $u:\TT^{N}\times[0,\infty)\times\O\to\R$ is called a (stochastic) pathwise entropy solution to \eqref{eq:scl-torus} if, for all $\o\in\O$, $u(\cdot,\o)$ is a pathwise entropy solution to \eqref{eq:scl-torus}, $n,m$ are kinetic measures and $t\mapsto u(\cdot, t)$ is an $\mcF_{t}$-adapted process in $L^{1}(\TT^{N})$.

Since the entropy solution to \eqref{eq:scl-torus} is constructed in a pathwise manner, for each $u_{0}\in L^{1}(\TT^{N})$ and $t\geq0$, the map 
\begin{equation}
\vp(t,\o)u_{0}:=u(\cdot,t;\o,u_{0})\label{takis0}
\end{equation}
defines a continuous random dynamical system (RDS) on $L^{1}(\TT^{N})$. For some background on RDS we refer to \cite[Appendix A]{GS14-2}.  

\noindent The associated Markovian semigroup $(P_{t})_{t\geq0}$ is given, for each bounded measurable function $f$ on $L^{1}(\TT^{N})$, $u_{0}\in L^{1}(\TT^{N})$ and $t\geq0$, by 
\[
P_{t}f(u_{0}):=\E f(u(\cdot,t;\cdot,u_{0}))=\E f(\vp(t,\cdot)u_{0}).
\]
As usual, by duality we may consider the action of $(P_{t})_{t\geq0}$ on the space $\mcM_{1}$ of probability measures on $L^{1}(\TT^{N})$, that is, for each $\mu\in\mcM_{1}$, we define 
\[
P_{t}^{*}\mu(f):=\int_{L^{1}}P_{t}f(x)d\mu(x).
\]
A probability measure $\mu$ is an invariant measure for $(P_{t})_{t\geq0}$ if,  for all $t\ge0$, 
\[
P_{t}^{*}\mu=\mu ,
\]
and $\mu$ is said to be strongly mixing if, for each $\nu\in\mcM_{1}$ and as $t\to\infty$, 
\[
P_{t}^{*}\nu\rightharpoonup\mu\ \text{weak}\star\text{ in }\mcM_{1}.
\]
\noindent The study of the long time behavior of the stochastic entropy solutions is based on a new stochastic averaging-type lemma (Theorem~\ref{thm:reg} below) which holds true under the following genuine nonlinearity condition:
\begin{equation}
\begin{cases}
\text{there exist \ensuremath{\t\in(0,1]}\,\ and \ensuremath{C>0}\,\ such that, for all \ensuremath{\s\in S^{N-1}}, \ensuremath{z\in\R^{N}}and \ensuremath{\ve>0},}\\[1mm]
\hskip1in|\{\xi\in\R:|f(\xi)\sigma-z|^{2}+\sigma A(\xi)\sigma\le\ve\}|\le C\ve^{\frac{\theta}{2}},
\end{cases}\label{flux}
\end{equation}
where the product of the two vectors $f(\xi)\sigma$ is defined by $\left(f(\xi)\sigma\right)^{i}:=f^{i}(\xi)\sigma^{i}$ and  $\sigma A(\xi)\sigma:=\sum_{i,j}a_{ij}(\xi)\s_{i}\s_{j}$. We choose the scaling $\frac{\t}{2}$ on the right hand side in order to be consistent with our previous work \cite{GS14-2}. 

\noindent Also note that in the corresponding deterministic case, that is \eqref{eq:scl-torus} with $z(t)=(t,\dots,t)$, the expectation is that the non-degeneracy condition should be (see Tadmor and Tao \cite{TT07}) 
\begin{equation}
|\{\xi\in\R:|f(\xi)\cdot\sigma-z|+\sigma A(\xi)\sigma\le\ve\}|\lesssim\ve^{\frac{\theta}{2}}.\label{eq:det_scaling}
\end{equation}
In \eqref{eq:det_scaling} the hyperbolic and parabolic parts $|f(\xi)\cdot\sigma-z|$ and $\sigma A(\xi)\sigma$ scale respectively linearly and quadratically in $\s$. In contrast, in the stochastic case both the hyperbolic and parabolic parts scale quadratically in $\s$. This change in the scaling of the hyperbolic part in \eqref{flux} is due to the quadratic scaling of Brownian motion. The fact that both the hyperbolic and the parabolic part of \eqref{eq:scl-torus} are scaled quadratically is crucial for the proof of the averaging lemma and allows a significantly simpler treatment than in \cite{TT07}. 

For notational simplicity we set 
  $$p_{0}:=0\vee p_{1}\vee p_{2}.$$

\noindent \begin{thm}\label{thm:ltb} Assume \eqref{takis data3}, \eqref{b}, \eqref{id} and \eqref{flux}. Then, as $t\to\infty$ 
\begin{equation}
u(\cdot,t;\cdot,u_{0})\to\bar{u}_{0}\ \text{ in \ensuremath{L^{1}(\O;L^{1}(\TT^{N}))}\,\ and \ensuremath{\P}-a.s. in \ensuremath{L^{1}(\TT^{N})}.}\label{eq:main_conv}
\end{equation}
Moreover, for $t\geq1$ and $u_{0}\in L^{2+p_{0}}(\TT^{N})$, 
\[
\E\|u(\cdot,t;\cdot,u_{0})-\bar{u}_{0}\|_{1}\le t^{-\frac{\t}{4+\t}}\left(\|u_{0}\|_{2+p_{0}}^{2+p_{0}}+1\right).
\]
In particular, $\d_{\bar{u}_{0}}$ is the unique invariant measure for $(P_{t})_{t\geq0}$ on $L_{\bar{u}_{0}}^{1}(\TT^{N})$ and is strongly mixing. Restricted to $L_{\bar{u}_{0}}^{1}(\TT^{N})$ the random dynamical system $\vp$ has ${\bar{u}_{0}}$ as a forward and pullback random attractor. \end{thm}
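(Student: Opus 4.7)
The plan is to combine the new averaging lemma (Theorem~\ref{thm:reg}) with the entropy and parabolic dissipation estimates from Theorem~\ref{thm:existence}, following the scheme developed by the authors in \cite{GS14-2} for the purely hyperbolic case, while accounting for the additional complications due to the degenerate, non-isotropic parabolic term.

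First, I reduce to the case $\bar u_0=0$ by translation invariance and observe that the pathwise entropy solution preserves the spatial average and is an $L^{1}$-contraction, so that $t\mapsto\|u(\cdot,t)-\bar u_0\|_1$ is $\P$-a.s.\ nonincreasing. Combining the torus analogues of \eqref{eq:measure_par_est-1} and \eqref{eq:measure_bound} yields
\begin{equation*}
\E\int_{\TT^N\times\R\times\R_+}(1+|\xi|^{p_0})\, d(m+n)(x,\xi,s)\ \lesssim\ \|u_0\|_{2+p_0}^{2+p_0}+1,
\end{equation*}
so by pigeonhole on the unit subintervals of $[t/2,t]$ there is a (random) $t^\ast\in[t/2,t]$ with
\begin{equation*}
\E\int_{\TT^N\times\R\times[t^\ast,t^\ast+1]}(1+|\xi|^{p_0})\, d(m+n)\ \lesssim\ \bigl(\|u_0\|_{2+p_0}^{2+p_0}+1\bigr)/t.
\end{equation*}

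Applying Theorem~\ref{thm:reg} to $u$ on the window $[t^\ast,t^\ast+1]$, under the non-degeneracy condition \eqref{flux} one obtains a fractional Sobolev estimate of the schematic form
\begin{equation*}
\E\int_{t^\ast}^{t^\ast+1}\|u(\cdot,s)\|_{W^{\lambda,p}(\TT^N)}^{p}\,ds\ \lesssim\ \bigl((\|u_0\|_{2+p_0}^{2+p_0}+1)/t\bigr)^{\beta}
\end{equation*}
for exponents $\lambda,p,\beta>0$ determined by $\theta$ and the parameters of \eqref{takis data3}. Since $u(\cdot,s)$ has mean $\bar u_0$, a Poincar\'e inequality bounds $\|u(\cdot,s)-\bar u_0\|_1$ by the $W^{\lambda,p}$-seminorm, so some $s^\ast\in[t^\ast,t^\ast+1]$ satisfies $\E\|u(\cdot,s^\ast)-\bar u_0\|_1\lesssim\bigl((\|u_0\|_{2+p_0}^{2+p_0}+1)/t\bigr)^{\beta/p}$, and the pathwise monotonicity transports this to time $t$. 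Optimizing the exponents exactly as in \cite{GS14-2} produces the sharp rate $t^{-\theta/(4+\theta)}$, and the $L^1(\O;L^1(\TT^N))$ and $\P$-a.s.\ convergences asserted in \eqref{eq:main_conv} follow for $u_0\in L^{2+p_0}$, extending to general $u_0\in L^1(\TT^N)$ by density together with the $L^1$-contraction and a Borel--Cantelli argument applied to a dyadic subsequence.

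It remains to pass to the invariant measure and the attractor. The convergence $P_t f(u_0)\to f(\bar u_0)$ for every $f\in C_b(L^1(\TT^N))$ implies $P_t^\ast \nu\rightharpoonup\delta_{\bar u_0}$ weak-$\star$ for every $\nu\in\mcM_1$ supported in $L^1_{\bar u_0}(\TT^N)$, which yields simultaneously the uniqueness and the strong mixing property of $\delta_{\bar u_0}$. The forward and pullback random attractor assertions for $\vp$ restricted to $L^1_{\bar u_0}(\TT^N)$ follow from the $\P$-a.s.\ pathwise convergence combined with the pathwise $L^{1}$-contraction, in the manner recalled in \cite[Appendix~A]{GS14-2}. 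The main obstacle, requiring extra care beyond \cite{GS14-2}, is to propagate the quadratic scaling of \eqref{flux} through the averaging lemma in the presence of the degenerate, non-isotropic parabolic dissipation measure \eqref{eq:diss_meas} and the growth permitted by \eqref{takis data3}, ensuring that $n$ enters the estimate on equal footing with the entropy defect measure $m$ without degrading the exponents.
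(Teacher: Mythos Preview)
Your approach differs from the paper's and has a genuine gap at the key step. You pigeonhole the kinetic/dissipation mass onto a unit window $[t^\ast,t^\ast+1]$ and then invoke Theorem~\ref{thm:reg} on that window to extract a Sobolev bound decaying like $t^{-\beta}$. But Theorem~\ref{thm:reg}, as stated, bounds $\E\int_{t^\ast}^{t^\ast+1}\|u(s)\|_{W^{\lambda,1}}\,ds$ by a constant multiple of $1+\|u(t^\ast)\|_{2+p_0}^{2+p_0}\le 1+\|u_0\|_{2+p_0}^{2+p_0}$, with \emph{no} dependence on the kinetic measure over the window and hence no decay in $t$. The pigeonhole gives smallness of $\E\int_{[t^\ast,t^\ast+1]}(1+|\xi|^{p_0})\,d(m+n)$, but you never connect this quantity to the regularity estimate; Theorem~\ref{thm:reg} simply is not sensitive to it. Your appeal to ``optimizing the exponents exactly as in \cite{GS14-2}'' cannot rescue this, because the optimization in \cite{GS14-2} is over a regularization parameter absent from your scheme, not over the window. (A minor additional issue: if $t^\ast$ is genuinely random, the subsequent expectations need justification; pigeonholing after taking $\E$ gives a deterministic $t^\ast$, and you should say so.)

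The paper does not deduce Theorem~\ref{thm:ltb} from Theorem~\ref{thm:reg}; both are consequences of the same underlying estimates. One introduces a tunable regularization $B=(-\Delta)^\alpha+\mathrm{Id}$ of strength $\gamma>0$ and, via Duhamel for the transport--parabolic semigroup $S_{A_\gamma(\xi)}$ in \eqref{eq:semigroup}, splits $u(t)=u^0(t)+u^1(t)+Q(t)$ as in \eqref{takis5}--\eqref{takis8}. Each piece is estimated after a spatial Fourier transform: $u^0$ and $u^1$ through the integral Lemma~\ref{lem:integral_estimate-1} (this is where \eqref{flux} enters, and where the quadratic Brownian scaling makes the hyperbolic and parabolic parts commensurate), and $Q$ through the smoothing of $e^{-\gamma tB}$ together with \eqref{eq:measure_bound} and the growth bound \eqref{takis data3}. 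This produces bounds with explicit powers of $\gamma$; choosing $\gamma=T^{-a}$ with $a=\tfrac{2}{4+\theta}$ balances them and yields the rate $T^{-\theta/(4+\theta)}$. The decay mechanism is thus not ``small kinetic mass on a good window'' but the trade-off, governed by $\gamma$, between the damping in $u^0$, the smallness of $u^1$, and the blow-up of $Q$. Your concluding remarks on invariant measures and attractors are fine and match the paper; the issue lies squarely in how the decay rate is obtained.
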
 

\noindent Note that, if $u_{0}\in L^{\infty}(\TT^{N}),$ then the convergence in \eqref{eq:main_conv} holds in $L^{p}(\O;L^{p}(\TT^{N}))$ and $\P$-a.s.\ in $L^{p}(\TT^{N})$  for all $p\in[1,\infty)$, since $\|u(\cdot,t;\cdot,u_{0})\|_{\infty}\le\|u_{0}\|_{\infty}$ (see Theorem \ref{thm:existence}).

\no Using essentially the same estimates leading to Theorem \ref{thm:ltb} we also prove a new regularity result, which extends the estimates obtained in \cite{LPS12,GS14-2} for the hyperbolic case, that is, when $A\equiv0$ and in \cite{TT07} for the deterministic problem. 

\begin{thm}\label{thm:reg} Assume \eqref{takis data3}, \eqref{b}, \eqref{id},  \eqref{flux} and let $u_{0}\in L^{2+p_{0}}(\TT^{N})$. Then, for all $\l\in(0,\frac{2\t}{\t+2})$, $\d>0$ and $T>0$, 
\[
\E\int_{0}^{T}\|u(t)\|_{W^{\l,1}}dt\lesssim(1+\|u_{0}\|_{2+p_{0}}^{2+p_{0}})\ \text{ and }\ \sup_{t\geq\d}\E\|u(t)\|_{W^{\l,1}}<\infty.
\]
\end{thm}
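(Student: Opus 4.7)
The proof follows the strategy of the stochastic averaging lemma developed in \cite{GS14-2} for the hyperbolic case, adapted here to incorporate the parabolic dissipation measure $n$. The key conceptual simplification, already emphasized in the paragraph introducing \eqref{flux}, is that in the stochastic setting the Ito correction produced by the Stratonovich transport noise contributes a dissipative term that scales quadratically in the Fourier variable, with exactly the same scaling as the parabolic symbol $n^{T}A(\xi)n$; condition \eqref{flux} is tailored to this combined symbol, so both contributions can be treated on equal footing.

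The first step is to put the kinetic equation into Ito form and take the Fourier transform in $x$. For each Fourier mode $n$ this yields a linear SDE for $\hat{\chi}(n,\xi,t)$ of the schematic form
\begin{equation*}
d\hat\chi + iF'(\xi)\cdot n\,\hat\chi\, d\b_{t} + \mu(\xi,n)\hat\chi\, dt = \partial_{\xi}(\hat m + \hat n)\, dt,\qquad \mu(\xi,n):=\tfrac{1}{2}\sum_{i=1}^{N}(F^{i\,\prime}(\xi))^{2}n_{i}^{2} + n^{T}A(\xi)n,
\end{equation*}
so that $\mu(\xi,n)/|n|^{2}$ is precisely the symbol bounded below in \eqref{flux}. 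Duhamel's formula expresses $\hat\chi$ as the action of the stochastic exponential $\Phi(t,s;\xi,n):=\exp\bigl(-iF'(\xi)\cdot n(\b_{t}-\b_{s})-n^{T}A(\xi)n(t-s)\bigr)$ on the initial datum plus a stochastic convolution against $\partial_{\xi}(\hat m+\hat n)$; integrating in $\xi$ produces a representation for $\hat u(n,t)=\int_{\R}\hat\chi(n,\xi,t)\,d\xi$.

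The heart of the argument is an averaging estimate for $\hat u(n,t)$. Fix $\ve>0$ and split $\R$ into the non-degenerate set $G_{\ve}:=\{\xi:\mu(\xi,n)/|n|^{2}\geq\ve\}$ and its complement. On $G_{\ve}$ the initial-datum contribution decays as $e^{-c\ve|n|^{2}t}$ after taking expectation, while $|G_{\ve}^{c}|\lesssim\ve^{\t/2}$ by \eqref{flux}. The source term involving $m+n$ is handled through integration by parts in $\xi$, justified by the regularity \eqref{eq:b-regularity} and the chain rule \eqref{eq:chain-rule}, and is then controlled by the energy identity \eqref{eq:measure_bound}; applied with $p=p_{0}$ this gives
\begin{equation*}
\E\int_{0}^{\infty}\int_{\TT^{N}\times\R}|\xi|^{p_{0}}\,d(m+n)\lesssim\|u_{0}\|_{p_{0}+2}^{p_{0}+2}.
\end{equation*}
The growth assumption \eqref{takis data3} on $F''$ and $A'$ is exactly what is needed to bound $\xi$-derivatives of $\Phi$ and of $\chi$ by factors $1+|\xi|^{p_{0}}$ that can be absorbed into this weighted measure bound. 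Optimizing $\ve=\ve(|n|,t)$ produces a pointwise-in-time frequency estimate on $\E|\hat u(n,t)|$, and a Littlewood-Paley resummation yields the $W^{\l,1}$-bound for $\l<\tfrac{2\t}{\t+2}$, the exponent arising from the balance between the exponential decay rate on $G_{\ve}$ and the measure $\ve^{\t/2}$ of its complement. Integration in time gives the first claim, and uniformity in $t\geq\d$ for the second claim follows from the same pointwise frequency estimate together with the lower bound $t\geq\d$.

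The chief technical obstacle, compared to the hyperbolic case of \cite{GS14-2}, is the treatment of the parabolic dissipation measure $n$ in the Duhamel source: unlike $m$, which carries only an $L^{1}$-type bound, the measure $n$ encodes information on the regularity of $\b(u)$ via \eqref{eq:diss_meas}, and converting weighted $\xi$-integrals of $n$ into controlled quantities requires the chain rule \eqref{eq:chain-rule} together with the $L^{2}$-bound \eqref{eq:measure_par_est-1}. A secondary difficulty, already present in \cite{GS14-2}, is that the $L^{1}$-based target $W^{\l,1}$ is not directly accessible by a Plancherel-type argument, so a Littlewood-Paley decomposition must be combined with the a priori $L^{1}$-contraction provided by Theorem \ref{thm:existence}.
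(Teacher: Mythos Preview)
Your proposal contains a genuine gap in the averaging mechanism. You claim that on $G_{\ve}=\{\xi:\mu(\xi,n)/|n|^{2}\ge\ve\}$ the initial-datum contribution decays like $e^{-c\ve|n|^{2}t}$ after taking expectation, with $\mu$ the Ito-corrected symbol. But the propagator $\Phi$ you yourself write down has $|\Phi(t,s;\xi,n)|=e^{-n^{T}A(\xi)n(t-s)}$: the Brownian phase is purely imaginary, and the Ito correction cancels against the quadratic-variation term in the stochastic exponential, so the transport noise produces no pointwise-in-$\xi$ dissipation. The averaging effect appears only after integrating in $\xi$ \emph{and} taking expectation, and then the relevant quantity is the cross kernel
\[
\E\,\Phi(\xi)\overline{\Phi(\xi')}=\exp\Bigl(-\tfrac{t}{2}\sum_{i}(f^{i}(\xi)-f^{i}(\xi'))^{2}n_{i}^{2}-t\bigl(n^{T}A(\xi)n+n^{T}A(\xi')n\bigr)\Bigr),
\]
which is \emph{not} controlled by restricting $\xi,\xi'\in G_{\ve}$ (take $A\equiv0$ and $f$ constant). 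Relatedly, \eqref{flux} is a sublevel-set measure bound with a shift $z\in\R^{N}$, not a lower bound on your symbol $\mu$; your splitting uses only the case $z=0$, which is strictly weaker and, as the example shows, insufficient.

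The paper proceeds quite differently. It introduces an artificial regularization $\gamma B=\gamma((-\Delta)^{\alpha}+\mathrm{Id})$ and the decomposition $u=u^{0}+u^{1}+Q$ of Section~\ref{sec:ltb}; the averaging for $u^{0}$ and $u^{1}$ comes from writing the Brownian expectation as a Gaussian integral in $w$ and applying Lemma~\ref{lem:integral_estimate-1}, which is precisely where the full condition~\eqref{flux} with arbitrary $z$ enters (the $z$ is the Fourier variable dual to $w$). The genuinely new difficulty compared to~\cite{GS14-2} is not the parabolic dissipation measure $n$---that is simply absorbed into $q=m+n$ and bounded via~\eqref{eq:measure_bound}, and the chain rule~\eqref{eq:chain-rule} plays no role here---but the extra term $A'(\xi){:}D^{2}(t-s)$ appearing in $\partial_{\xi}S^{*}_{A_{\gamma}(\xi)}$ in the $Q$-estimate, which forces the constraint $\lambda<4\alpha-2$. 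One then obtains $\lambda<(4\alpha-2)\wedge(\theta(1-\alpha)+\tau)$ under the bootstrap hypothesis $\chi\in L^{2}H^{\tau}$, optimizes over $\alpha$, and iterates starting from $\tau=0$ to reach the full range $\lambda<\tfrac{2\theta}{\theta+2}$. This bootstrap is essential for the stated exponent and is absent from your sketch.
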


\section{The stability of pathwise entropy solutions\label{sec:unique}}

We present the proof of Theorem \ref{thm:unique}. Since it is rather long, we have divided it into several parts and we have chosen to present some technical arguments in the Appendix \ref{sec:err-est}. To help the reader, in the first part of this section we present an informal overview of the arguments and the main ideas and then we continue with the actual proof.

\subsection*{Informal arguments and main ideas}
We present  an informal overview of the proof of uniqueness of pathwise entropy solutions. While several of the arguments presented next need further justification, all the main steps of the rigorous proof are included. In order to make everything rigorous, it is necessary to introduce appropriate smoothing/mollifications and doubling of variables. All these  create significant technical difficulties since it is necessary to estimate the errors etc..

\textbf{\textit{Step 1: }}\textit{Transformation.}  The definition of a pathwise entropy solution corresponds to the transformation
\[
\td\chi(x,\xi,t):=\chi(x+f(\xi)z_{t},\xi,t),
\]
with $\td\chi$ solving, informally,
\begin{align}
\partial_{t}\td\chi(x,\xi,t) & =\partial_{t}(\chi(x+f(\xi)z_{t},\xi,t))\nonumber \\
 & =D_{x}\chi(x+f(\xi)z_{t},\xi,t)\cdot f(\xi)\dot{z}_{t}+\partial_{t}\chi(x+f(\xi)z_{t},\xi,t)\label{eq:informal_transform}\\
 & =\sum_{i,j}a_{ij}(\xi)\partial_{ij}^{2}\td\chi(x,\xi,t)+(\partial_{\xi}q)(x+f(\xi)z_{t},\xi,t).\nonumber 
\end{align}
As noted earlier, the point of this transformation is that the derivative of $z$ does not appear in \eqref{eq:informal_transform}. As compared to the usual proof of the stability  of kinetic solutions in the deterministic case (see, for example, \cite{P02}) additional complications appear since the right hand side is not anymore the derivative of a measure. 

Indeed 
\begin{equation}
(\partial_{\xi}q)(x+f(\xi)z_{t},\xi,t)=\partial_{\xi}(q(x+f(\xi)z_{t},\xi,t))-D_{x}q(x+f(\xi)z_{t},\xi,t)\cdot\dot{f}(\xi)z_{t}\label{eq:non-measure}
\end{equation}
and the second term causes additional difficulties. 

Making use of the special properties of the function $\chi$, we find  that 
\begin{equation}\label{eq:rewritesquare}
\begin{cases}
\|u^{(1)}(t)-u^{(2)}(t)\|_{L^{1}(\R^{N})}  =\int(\chi^{(1)}(x,\xi,t)-\chi^{(2)}(x,\xi,t))^{2}dxd\xi \\[1.5mm]
 =\int|(\chi^{(1)}(x,\xi,t)|+|\chi^{(2)}(x,\xi,t)|-2\chi^{(1)}(x,\xi,t)\chi^{(2)}(x,\xi,t)dxd\xi.
\end{cases}
\end{equation}
The first step is to rewrite \eqref{eq:rewritesquare} in terms of $\td\chi$. Obviously,
\begin{align*}
\int|\chi^{(1)}(x,\xi,t)|+|\chi^{(2)}(x,\xi,t)|dxd\xi & =\int|\td\chi^{(1)}(x,\xi,t)|+|\td\chi^{(2)}(x,\xi,t)|dxd\xi.
\end{align*}
For the third term in \eqref{eq:rewritesquare}, it is easy to see that 
\begin{align*}
 & \left|\int\chi^{(1)}(x,\xi,t)\chi^{(2)}(x,\xi,t)dxd\xi-\int\td\chi^{(1)}(x,\xi,t)\td\chi^{(2)}(x,\xi,t)dxd\xi\right|\\
 & \lesssim\|u^{(2)}\|_{L^{\infty}([0,T];(L^{\infty}\cap BV)(\R^{N}))}|z_{t}^{(1)}-z_{t}^{(2)}|,
\end{align*}
and, hence, for some $C>0$,
\begin{align*}
\|u^{(1)}(\cdot)-u^{(2)}(\cdot)\|_{L^{1}(\R^{N})}\Big|_{s}^{t}\le & \int(\td\chi^{(1)}(x,\xi,\cdot)-\td\chi^{(2)}(x,\xi,\cdot))^{2}dxd\xi\Big|_{s}^{t}\\[1mm]
 & +C\|u^{(2)}\|_{L^{\infty}([0,T];(L^{\infty}\cap BV)(\R^{N}))}\|z^{(1)}-z^{(2)}\|_{C([s,t];\R^{N})}.
\end{align*}
Therefore, it is enough to estimate
\begin{align*}
 & \partial_{t}\int(\td\chi^{(1)}(x,\xi,t)-\td\chi^{(2)}(x,\xi,t))^{2}dxd\xi\\
 & =\partial_{t}\int\big(|\td\chi^{(1)}(x,\xi,t)|+|\td\chi^{(2)}(x,\xi,t)|-2\td\chi^{(1)}(x,\xi,t)\td\chi^{(2)}(x,\xi,t)\big)dxd\xi,
\end{align*}
the advantage being that here we may use \eqref{eq:informal_transform} which makes sense also for non-smooth $z$. 

\textbf{\textit{Step 2:}} \textit{The product term.} Next we note that
\begin{align}
 & -2\frac{d}{dt}\int\td\chi^{(1)}(x,\xi,t)\td\chi^{(2)}(x,\xi,t)dxd\xi\nonumber \\
 & =-2\int[\td\chi^{(1)}(x,\xi,t)\partial_{t}\td\chi^{(2)}(x,\xi,t)+\partial_{t}\td\chi^{(1)}(x,\xi,t)\td\chi^{(2)}(x,\xi,t)]dxd\xi\label{eq:dt_product}\\
 & =-2\sum_{i,j}\int[\td\chi^{(1)}(x,\xi,t)a_{ij}(\xi)\partial_{ij}^{2}\td\chi^{(2)}(x,\xi,t)+a_{ij}(\xi)\partial_{ij}^{2}\td\chi^{(1)}(x,\xi,t)\td\chi^{(2)}(x,\xi,t)]dxd\xi\nonumber \\
 & +2\int[\td\chi^{(1)}(x,\xi,t)(\partial_{\xi}q^{(2)})(x+f(\xi)z_{t}^{(2)},\xi,t)+(\partial_{\xi}q^{(1)})(x+f(\xi)z_{t}^{(1)},\xi,t)\td\chi^{(2)}(x,\xi,t)]dxd\xi\nonumber 
\end{align}
and
\begin{align*}
 & 2\int[\td\chi^{(1)}(x,\xi,t)(\partial_{\xi}q^{(2)})(x+f(\xi)z_{t}^{(2)},\xi,t)+(\partial_{\xi}q^{(1)})(x+f(\xi)z_{t}^{(1)},\xi,t)\td\chi^{(2)}(x,\xi,t)]dxd\xi\\
 & =2\int[\chi^{(1)}(x+f(\xi)(z_{t}^{(1)}-z_{t}^{(2)}),\xi,t)\partial_{\xi}q^{(2)}(x,\xi,t)+\partial_{\xi}q^{(1)}(x,\xi,t)\chi^{(2)}(x+f(\xi)(z_{t}^{(2)}-z_{t}^{(1)}),\xi,t)]dxd\xi\\
 & =2\int[(\partial_{\xi}\chi^{(1)})(x+f(\xi)(z_{t}^{(1)}-z_{t}^{(2)}),\xi,t)q^{(2)}(x,\xi,t)+q^{(1)}(x,\xi,t)(\partial_{\xi}\chi^{(2)})(x+f(\xi)(z_{t}^{(2)}-z_{t}^{(1)}),\xi,t)]dxd\xi\\
 & +Err^{(1,2)}.
\end{align*}
The error term $Err^{(1,2)}$, which is a consequence of  the defect identified in \eqref{eq:non-measure}, is given by
\begin{align*}
Err^{(1,2)}(t):=2\int \bigg( {f'}(\xi)(z_{t}^{(1)}-z_{t}^{(2)}) & \big(D_{x}\chi^{(1)}(x+f(\xi)(z_{t}^{(1)}-z_{t}^{(2)}),\xi,t)q^{(2)}(x,\xi,t)\\
 & -q^{(1)}(x,\xi,t)D_{x}\chi^{(2)}(x+f(\xi)(z_{t}^{(2)}-z_{t}^{(1)}),\xi,t)\big)\bigg)dxd\xi.
\end{align*}
We set 
$$\td q^{(i)}(x,\xi,t):=q^{(i)}(x+f(\xi)z_{t},\xi,t) \ \text{ and} \  \td u^{(i)}(x,\xi,t):=u^{(i)}(x+f(\xi)z_{t}^{(i)},t),$$ 
and observe that 
\begin{align*}
 & 2\int\Big((\partial_{\xi}\chi^{(1)})(x+f(\xi)(z_{t}^{(1)}-z_{t}^{(2)}),\xi,t)q^{(2)}(x,\xi,t)+q^{(1)}(x,\xi,t)(\partial_{\xi}\chi^{(2)})(x+f(\xi)(z_{t}^{(2)}-z_{t}^{(1)}),\xi,t)\Big)dxd\xi\\
 & =2\int\Big((\partial_{\xi}\chi^{(1)})(x+f(\xi)z_{t}^{(1)},\xi,t)\td q^{(2)}(x,\xi,t)+\td q^{(1)}(x,\xi,t)(\partial_{\xi}\chi^{(2)})(x+f(\xi)z_{t}^{(2)},\xi,t)\Big)dxd\xi\\
 & =2\int\Big((\d(\xi)-\d(\xi-\td u^{(1)}(x,\xi,t)))\td q^{(2)}(x,\xi,t)+\td q^{(1)}(x,\xi,t)(\d(\xi)-\d(\xi-\td u^{(2)}(x,\xi,t)))\Big)dxd\xi\\
 & \le-2\int\Big(\d(\xi-\td u^{(1)}(x,\xi,t))\td n^{(2)}(x,\xi,t)+\td n^{(1)}(x,\xi,t)\d(\xi-\td u^{(2)}(x,\xi,t)))\Big)dxd\xi\\
 & +2\int (q^{(2)}(x,0,t)+q^{(1)}(x,0,t))dx.
\end{align*}
The key difference  with  the purely hyperbolic case is that here we do not drop the terms containing the parabolic dissipation measures $\td n^{(1)} \ \text{ and} \ \td n^{(2)}$. Instead, we will   use them to compensate the additional parabolic terms appearing in \eqref{eq:dt_product}. 

In conclusion, we have 
\begin{align*}
 & -2\frac{d}{dt}\int\td\chi^{(1)}(x,\xi,t)\td\chi^{(2)}(x,\xi,t)dxd\xi\le I^{par}(t)+I^{hyp}(t)+Err^{(1,2)}(t),
\end{align*}
where
\begin{align*}
I^{par}(t):= & -2\sum_{i,j}\int\big(\td\chi^{(1)}(x,\xi,t)a_{ij}(\xi)\partial_{ij}^{2}\td\chi^{(2)}(x,\xi,t)+a_{ij}(\xi)\partial_{ij}^{2}\td\chi^{(1)}(x,\xi,t)\td\chi^{(2)}(x,\xi,t)\big)dxd\xi\\
 & -2\int\big(\td n^{(1)}(x,\xi,t)\d(\xi-\td u^{(2)}(x,\xi,t))+\d(\xi-\td u^{(1)}(x,\xi,t))\td n^{(2)}(x,\xi,t)\big)dxd\xi
\end{align*}
and
\[
I^{hyp}(t):=2\int\big( q^{(2)}(x,0,t)+q^{(1)}(x,0,t)\big)dx.
\]

\textbf{\textit{Step 3:}}~\textit{The  hyperbolic term $I^{hyp}$.}  

~Multiplying \eqref{eq:informal_transform} by $\sgn(\xi)$ and integrating yields
\begin{align*}
\frac{d}{dt}\int|\td\chi(x,\xi,t)|dxd\xi & =\frac{d}{dt}\int\sgn(\xi)\td\chi(x,\xi,t)dxd\xi\\
 & =\sum_{i,j}\int\sgn(\xi)a_{ij}(\xi)\partial_{ij}^{2}\td\chi(x,\xi,t)dxd\xi+\int\sgn(\xi)(\partial_{\xi}q)(x+f(\xi)z_{t},\xi,t)dxd\xi\\
 & =-2\int q(x,0,t)dx,
\end{align*}
and thus
\[
I^{hyp}=-\frac{d}{dt}\int|\td\chi^{1}|dxd\xi-\frac{d}{dt}\int|\td\chi^{2}|dxd\xi.
\]

\textbf{\textit{Step 4:}}\textit{ The parabolic term $I^{par}$.}
~Using integration by parts, we observe 
\begin{align*}
 & -2\sum_{i,j}\int\big(\td\chi^{(1)}(x,\xi,t)a_{ij}(\xi)\partial_{ij}^{2}\td\chi^{(2)}(x,\xi,t)+a_{ij}(\xi)\partial_{ij}^{2}\td\chi^{(1)}(x,\xi,t)\td\chi^{(2)}(x,\xi,t)\big)dxd\xi\\
 & =4\sum_{i,j}\int a_{ij}(\xi)\partial_{j}\td\chi^{(1)}(x,\xi,t)\partial_{i}\td\chi^{(2)}(x,\xi,t)dxd\xi\\
 & =4\sum_{i,j}\sum_{k}\int\s_{ik}(\xi)\s_{kj}(\xi)\partial_{j}\td\chi^{(1)}(x,\xi,t)\partial_{i}\td\chi^{(2)}(x,\xi,t)dxd\xi
\end{align*}
and informally (this is where the chain-rule will be required in the rigorous proof)
\begin{align*}
 & \sum_{i,j}\int\s_{ik}(\xi)\s_{kj}(\xi)\partial_{j}\td\chi^{(1)}(x,\xi,t)\partial_{i}\td\chi^{(2)}(x,\xi,t)dxd\xi\\
 & =\sum_{i,j}\int\s_{kj}(\xi)\partial_{j}\td u^{(1)}(x,\xi,t)\d(\xi-\td u^{(1)}(x,\xi,t))\s_{ik}(\xi)\partial_{i}\td u^{(2)}(x,\xi,t)\d(\xi-\td u^{(2)}(x,\xi,t))dxd\xi\\
 & =\sum_{i,j}\int\partial_{j}\b_{kj}(\td u^{(1)}(x,\xi,t))\d(\xi-\td u^{(1)}(x,\xi,t))\partial_{i}\b_{ik}(\td u^{(2)}(x,\xi,t))\d(\xi-\td u^{(2)}(x,\xi,t))dxd\xi.
\end{align*}
Hence,
\begin{align*}
 & -2\sum_{i,j}\int\big(\td\chi^{(1)}(x,\xi,t)a_{ij}(\xi)\partial_{ij}^{2}\td\chi^{(2)}(x,\xi,t)+a_{ij}(\xi)\partial_{ij}^{2}\td\chi^{(1)}(x,\xi,t)\td\chi^{(2)}(x,\xi,t)\big)dxd\xi\\
 & =4\sum_{i,j}\sum_{k}\int\partial_{j}\b_{kj}(\td u^{(1)}(x,\xi,t))\d(\xi-\td u^{(1)}(x,\xi,t))\partial_{i}\b_{ik}(\td u^{(2)}(x,\xi,t))\d(\xi-\td u^{(2)}(x,\xi,t))dxd\xi.
\end{align*}

Since
\begin{align*}
 & \left(\sum_{i}\partial_{i}\b_{ik}(\td u^{(1)}(x,t))\right)^{2}+\left(\sum_{j}\partial_{j}\b_{jk}(\td u^{(2)}(x,t))\right)^{2}\ge2\sum_{i,j}\partial_{i}\b_{ik}(\td u^{(1)}(x,t))^{2}\partial_{j}\b_{jk}(\td u^{(2)}(x,t)),
\end{align*}
using \eqref{eq:diss_meas} we find
\begin{align*}
 & -\int\td n^{(1)}(x,\xi,t)\d(\xi-\td u^{(2)}(x,t))+\d(\xi-\td u^{(1)}(x,t))\td n^{(2)}(x,\xi,t)dxd\xi\\
 & \le-2\sum_{k}\sum_{i,j}\int\d(\xi-\td u^{(1)}(x,\xi,t))\d(\xi-\td u^{(2)}(x,\xi,t))\partial_{i}\b_{ik}(\td u^{(1)}(x,\xi,t))\partial_{j}\b_{jk}(\td u^{(2)}(x,\xi,t))dxd\xi,
\end{align*}
and, in conclusion,
\[
I^{par}\le0.
\]

\textbf{\textit{Step 5:}}
Combining the previous steps we find
\begin{align*}
 & \frac{d}{dt}\int[|\td\chi^{1}|-2\td\chi^{(1)}(x,\xi,t)\td\chi^{(2)}(x,\xi,t)+|\td\chi^{2}|]dxd\xi\le Err^{(1,2)}(t).
\end{align*}
Thus, it remains to estimate $Err^{(1,2)}$. For this we  approximate $\chi^{(i)}$ by convolution with parameter $\ve$, that is, roughly speaking, we replace $\chi^{(i)}$ by a smooth $\chi^{(i),\ve}$. Since $\chi^{(i)}$ has bounded variation,  this introduces an error of order $\ve$, while the gradient $D_{x}\chi^{(i),\ve}$ blows up like $\frac{1}{\ve}$ as  $\ve \to 0$. This leads to the estimate
\[
Err^{(1,2)}(t)\lesssim \frac{|z_{t}^{(1)}-z_{t}^{(2)}|}{\ve} +\ve,
\]
Thus, choosing $\ve=|z_{t}^{(1)}-z_{t}^{(2)}|^{1/2}$, we obtain the bound 
\[
Err^{(1,2)}(t)\le |z_{t}^{(1)}-z_{t}^{(2)}|^{1/2},
\]
which allows to conclude the proof.

\subsection*{The rigorous proof of Theorem~\ref{thm:unique}}%}
\no We start with the observation 
\begin{align*}
\|u^{(1)}(t)-u^{(2)}(t)\|_{L^{1}(\R^{N})} & =\int(\chi^{(1)}(x,\xi,t)-\chi^{(2)}(x,\xi,t))^{2}dxd\xi\\
 & =\int\big(|\chi^{(1)}(x,\xi,t)|+|\chi^{(2)}(x,\xi,t)|-2\chi^{(1)}(x,\xi,t)\chi^{(2)}(x,\xi,t)\big)dxd\xi,
\end{align*}
where, for $i=1,2$, $\chi^{(i)}$ is related to $u^{(i)}$ by \eqref{eq:char_fctn}. To prove \eqref{eq:gen_kinetic_unique-2} we want to estimate the time derivative of the right hand side of the above equality. In order to be able to use the solution property, following \cite{LPS13,LPS14,GS14}, we replace $\chi^{(1)},\chi^{(2)}$ by $\vr\ast\chi^{(1)}$, $\vr\ast\chi^{(2)}$ for suitable choices of $\vr$. Then it is possible to differentiate with respect to $t$ at the expense of creating several additional terms that need to be estimated. 

\no \textbf{\textit{Step 1:}}\textit{ Approximation.} We perform three approximations. Firstly we consider the convolutions along characteristics in the space variable, secondly we localize the space and velocity variable and thirdly we regularize in the velocity variable. All these are explained next.

Given $t\in[0,T]$ define 
\begin{align*}
G(t) & :=\int(\chi^{(1)}(y,\eta,t)-\chi^{(2)}(y,\eta,t))^{2}dyd\eta,
\end{align*}
\no and let  $\vr_{\ve}^{s},\vr_{\d}^{v}$ be standard smooth mollifiers, that is  approximations of Dirac masses, and for $i=1,2$ we consider 
\[
\vr_{\ve}^{s,(i)}(x,y,\xi,t):=\vr_{\ve}^{s}(x-f(\xi)z^{(i)}(t)-y)
\]
and
\[
\vr_{\ve,\d}^{(i)}(x,y,\xi,\eta,t):=\vr_{\ve}^{s,(i)}(x,y,\xi,t)\vr_{\d}^{v}(\xi-\eta),
\]
that is  $\vr_{\ve}^{s,(i)}$ is the solution to \eqref{eq:transport} with $z=z^{(i)}$ and initial condition $\vr_{\ve}^{s,(i)}(\cdot,y,\xi,0)=\vr_{\ve}^{s}(\cdot-y)$.  Although we can start from the beginning with the $\vr_{\ve,\d}^{(i)}$'s, in order to keep the presentation simpler and, in view of the different role played by $\ve$ and $\d$, we work first with the 
$\vr_{\ve}^{s,(i)}$'s.

We then define 
\begin{align*}
G_{\ve}(t):= & -2\int\left(\chi^{(1)}\ast\vr_{\ve}^{s,(1)}\right)(y,\eta,t)\left(\chi^{(2)}\ast\vr_{\ve}^{s,(2)}\right)(y,\eta,t)dyd\eta\\
 & +\int\sgn(\eta)(\chi^{(1)}\ast\vr_{\ve}^{s,(1)})(y,\eta,t)dyd\eta+\int\sgn(\eta)(\chi^{(2)}\ast\vr_{\ve}^{s,(2)})(y,\eta,t)dyd\eta.
\end{align*}
Lemma \ref{lem:convolution_est-1-1} in the Appendix \ref{sec:err-est} yields
\begin{align*}
|G_{\ve}(t)-G(t)| & \le\left(\ve+\|f(u^{(1)}(t))\|_{L^\infty (\R^N)} |z^{(1)}(t)-z^{(2)}(t)|\right)\big(BV(u^{(1)}(t))+BV(u^{(2)}(t))\big). 
\end{align*}
\no Hence,
\begin{equation}
\begin{split}
G(t)-G(s)\le  G_{\ve}(t)-G_{\ve}(s) +& %\nonumber \\
2\left(\ve+\|f(u^{(1)})\|_{L^{\infty}(\R^{N}\times[s,t])}\|z^{(1)}-z^{(2)}\|_{C([s,t];\R^{N})}\right)\\ &(\|u^{(1)}\|_{L^{\infty}([s,t];BV(\R^{N}))}+\|u^{(2)}\|_{L^{\infty}([s,t];BV(\R^{N}))}),
\end{split}
\label{eq:G-err}
\end{equation}
and to conclude it is enough to derive an appropriate bound on $G_{\ve}(t)-G_{\ve}(s) $. 

We show, and this will be the main part of the proof, that 
\begin{equation}
G_{\ve}(t)-G_{\ve}(s)\lesssim\ve^{-1}\|z^{(1)}-z^{(2)}\|_{C([s,t];\R^{N})},\label{eq:main_est_unique}
\end{equation}
and combining \eqref{eq:G-err} and \eqref{eq:main_est_unique} we find 
\begin{align*}
G(t)-G(s)\lesssim \ve^{-1} \|z^{(1)}-z^{(2)}\|_{C([s,t];\R^{N})}
  +&2\left(\ve+\|f(u^{(1)})\|_{L^{\infty}(\R^{N}\times[s,t])}\|z^{(1)}-z^{(2)}\|_{C([s,t];\R^{N})}\right)\\&(\|u^{(1)}\|_{L^{\infty}([s,t];BV(\R^{N}))}+\|u^{(2)}\|_{L^{\infty}([s,t];BV(\R^{N}))}).
\end{align*}
The conclusion follows by choosing $\ve^2={\|z^{(1)}-z^{(2)}\|_{C([s,t];\R^{N})}}$.  It thus remains to prove \eqref{eq:main_est_unique}.

\textbf{\textit{Step 2:}} \textit{The product term.} We note that, while $\chi^{(i)}\ast\vr_{\ve}^{s,(i)}$ is smooth in $y$, in order to apply Definition \ref{def:path_e-soln-2} we still need to mollify with respect to $\eta$, that is to  consider $\chi^{(i)}\ast\vr_{\ve,\d}^{(i)}$ and, in the end, take $\d\to0$. For this mollification to converge, we first need to localize in $y$ and $\eta$, a fast which requires yet another layer of approximation. 

Therefore, for every $\psi\in C_{c}^{\infty}(\R^{N+1})$ with $\|\psi\|_{C(\R^{N})}\le1$, we introduce
\begin{align*}
G_{\ve,\psi,\d}(t):= & -2\int\psi(y,\eta)\left(\chi^{(1)}\ast\vr_{\ve,\d}^{(1)}\right)(y,\eta,t)\left(\chi^{(2)}\ast\vr_{\ve,\d}^{(2)}\right)(y,\eta,t)dyd\eta\\
 & +\int\psi(y,\eta)(\sgn\ast\vr_{\d}^{v})(\eta)(\chi^{(1)}\ast\vr_{\ve,\d}^{(1)})(y,\eta,t)dyd\eta+\int\psi(y,\eta)(\sgn\ast\vr_{\d}^{v})(\eta)(\chi^{(2)}\ast\vr_{\ve,\d}^{(2)})(y,\eta,t)dyd\eta.
\end{align*}
To simplify the notation we suppress the $\ve,\psi,\d$-dependence in the next few steps. We have: 
\begin{align*}
 & -2\frac{d}{dt}\int\psi(y,\eta)\left(\chi^{(1)}\ast\vr^{(1)}\right)\left(\chi^{(2)}\ast\vr^{(2)}\right)dyd\eta\\
 & =-2\int[\psi(y,\eta)\left(\chi^{(1)}\ast\vr^{(1)}\right)\frac{d}{dt}\left(\chi^{(2)}\ast\vr^{(2)}\right)+\psi(y,\eta)\left(\chi^{(2)}\ast\vr^{(2)}\right)\frac{d}{dt}\left(\chi^{(1)}\ast\vr^{(1)}\right)]dyd\eta\\
 & =-2\sum_{i,j=1}^{N}\int[\psi(y,\eta)\chi^{(1)}(x,\xi,t)\vr^{(1)}(x,y,\xi,\eta,t)\chi^{(2)}(x',\xi',t)a_{ij}(\xi')\partial_{x_{i}x_{j}}^{2}\vr^{(2)}(x',y,\xi',\eta,t)\\
 & +\psi(y,\eta)\chi^{(2)}(x',\xi',t)\vr^{(2)}(x',y,\xi',\eta,t)\chi^{(1)}(x,\xi,t)a_{ij}(\xi)\partial_{x_{i}x_{j}}^{2}\vr^{(1)}(x,y,\xi,\eta,t)]dxd\xi dx'd\xi'dyd\eta\\
 & -2\int[\psi(y,\eta)\chi^{(1)}(x,\xi,t)\vr^{(1)}(x,y,\xi,\eta,t)\vr^{(2)}(x',y,\xi',\eta,t)\partial_{\xi'}q^{(2)}(x',\xi',t)\\
 & +\psi(y,\eta)\chi^{(2)}(x',\xi',t)\vr^{(2)}(x',y,\xi',\eta,t)\vr^{(1)}(x,y,\xi,\eta,t)\partial_{\xi}q^{(1)}(x,\xi,t)]dxd\xi dx'd\xi'dyd\eta.
\end{align*}
We first consider the last two terms on the right hand side. As in the hyperbolic case we note
\begin{align*}
 & -2\int\big(\psi(y,\eta)\chi^{(1)}(x,\xi,t)\vr^{(1)}(x,y,\xi,\eta,t)\vr^{(2)}(x',y,\xi',\eta,t)\partial_{\xi'}q^{(2)}(x',\xi',t)\\
 & +\psi(y,\eta)\chi^{(2)}(x',\xi',t)\vr^{(2)}(x',y,\xi',\eta,t)\vr^{(1)}(x,y,\xi,\eta,t)\partial_{\xi}q^{(1)}(x,\xi,t)\big)dxd\xi dx'd\xi'dyd\eta\\
 & =-2\int\big(\psi(y,\eta)\chi^{(1)}(x,\xi,t)\partial_{\xi}\vr^{(1)}(x,y,\xi,\eta,t)\vr^{(2)}(x',y,\xi',\eta,t)q^{(2)}(x',\xi',t)\\
 & +\psi(y,\eta)\chi^{(2)}(x',\xi',t)\partial_{\xi'}\vr^{(2)}(x',y,\xi',\eta,t)\vr^{(1)}(x,y,\xi,\eta,t)q^{(1)}(x,\xi,t)\big)dxd\xi dx'd\xi'dyd\eta\\
 & +Err^{(1,2)}(t),
\end{align*}
where 
\begin{equation}
\begin{cases}
Err^{(1,2)}(t):=2\int\psi(y,\eta)\left(\chi^{(1)}(x,\xi,t)q^{(2)}(x',\xi',t)+\chi^{(2)}(x',\xi',t)q^{(1)}(x,\xi,t)\right)\\[1.5mm]
(\vr^{(1)}(x,y,\xi,\eta,t)\partial_{\xi'}\vr^{(2)}(x',y,\xi',\eta,t)+\partial_{\xi}\vr^{(1)}(x,y,\xi,\eta,t)\vr^{(2)}(x',y,\xi',\eta,t))dyd\eta dx'd\xi'dxd\xi.
\end{cases}\label{eq:err-1-1}
\end{equation}
We next use the nonnegativity of the Dirac masses $\delta$ to observe that 
\begin{align}
 & -2\int[\psi(y,\eta)\chi^{(1)}(x,\xi,t)\partial_{\xi}\vr^{(1)}(x,y,\xi,\eta,t)\vr^{(2)}(x',y,\xi',\eta,t)q^{(2)}(x',\xi',t)\nonumber \\
 & +\psi(y,\eta)\chi^{(2)}(x',\xi',t)\partial_{\xi'}\vr^{(2)}(x',y,\xi',\eta,t)\vr^{(1)}(x,y,\xi,\eta,t)q^{(1)}(x,\xi,t)]dxd\xi dx'd\xi'dyd\eta\nonumber \\
 & =2\int[\psi(y,\eta)\left(\d(\xi)-\d(\xi-u^{(1)}(x,t))\right)\vr^{(1)}(x,y,\xi,\eta,t)\vr^{(2)}(x',y,\xi',\eta,t)q^{(2)}(x',\xi',t)\nonumber \\
 & +\psi(y,\eta)\left(\d(\xi')-\d(\xi'-u^{(2)}(x',t))\right)\vr^{(2)}(x',y,\xi',\eta,t)\vr^{(1)}(x,y,\xi,\eta,t)q^{(1)}(x,\xi,t)]dxd\xi dx'd\xi'dyd\eta\label{eq:rewrite_hyperbolic_part}\\
 & \le-2\int[\psi(y,\eta)\d(\xi-u^{(1)}(x,t))\vr^{(1)}(x,y,\xi,\eta,t)\vr^{(2)}(x',y,\xi',\eta,t)n^{(2)}(x',\xi',t)\nonumber \\
 & +\psi(y,\eta)\d(\xi'-u^{(2)}(x',t))\vr^{(2)}(x',y,\xi',\eta,t)\vr^{(1)}(x,y,\xi,\eta,t)n^{(1)}(x,\xi,t)]dxd\xi dx'd\xi'dyd\eta\nonumber \\
 & +2\int\psi(y,\eta)\vr^{(1)}(x,y,0,\eta,t)\vr^{(2)}(x',y,\xi',\eta,t)q^{(2)}(x',\xi',t)dx dx'd\xi' dyd\eta\nonumber \\
 & +2\int \psi(y,\eta)\vr^{(1)}(x,y,\xi,\eta,t)\vr^{(2)}(x',y,0,\eta,t)q^{(1)}(x,\xi,t)]dxd\xi dx'dyd\eta,\nonumber 
\end{align}
and, hence,
\begin{align*}
-2\frac{d}{dt}\int\psi(y,\eta)\left(\chi^{(1)}\ast\vr^{(1)}\right)\left(\chi^{(2)}\ast\vr^{(2)}\right)dyd\eta & \le I^{par}(t)+I^{hyp}(t)+Err^{(1,2)}(t),
\end{align*}
with $I^{par}$ and  $I^{hyp}$  denoting respectively the parabolic and hyperbolic terms, that is 
\begin{align*}
I^{par}(t):= & -2\sum_{i,j=1}^{N}\int\big(\psi(y,\eta)\chi^{(1)}(x,\xi,t)\vr^{(1)}(x,y,\xi,\eta,t)\chi^{(2)}(x',\xi',r)a_{ij}(\xi')\partial_{x_{i}'x_{j}'}\vr^{(2)}(x',y,\xi',\eta,s)\\
 & +\psi(y,\eta)\chi^{(2)}(x',\xi',t)\vr^{(2)}(x',y,\xi',\eta,t)\chi^{(1)}(x,\xi,r)a_{ij}(\xi)\partial_{x_{i}x_{j}}^{2}\vr^{(1)}(x,y,\xi,\eta,s)\big)dxd\xi dx'd\xi'dyd\eta\\
 & -2\int\big(\psi(y,\eta)\d(\xi-u^{(1)}(x,t))\vr^{(1)}(x,y,\xi,\eta,t)\vr^{(2)}(x',y,\xi',\eta,t)n^{(2)}(x',\xi',t)\\
 & +\psi(y,\eta)\d(\xi'-u^{(2)}(x',t))\vr^{(2)}(x',y,\xi',\eta,t)\vr^{(1)}(x,y,\xi,\eta,t)n^{(1)}(x,\xi,t)\big)dxd\xi dx'd\xi'dyd\eta,
\end{align*}
and
\begin{align*}
I^{hyp}(t):= & 2\int\psi(y,\eta)\vr^{(1)}(x,y,0,\eta,t)\vr^{(2)}(x',y,\xi',\eta,t)q^{(2)}(x',\xi',t)dx dx'd\xi'dyd\eta\\
 & +2\int \psi(y,\eta)\vr^{(1)}(x,y,\xi,\eta,t)\vr^{(2)}(x',y,0,\eta,t)q^{(1)}(x,\xi,t)dxd\xi dx'dyd\eta;
\end{align*}
note that we call the above expression a hyperbolic term in spite of the occurrence of the parabolic dissipation measures $n^{(1)}$ and $n^{(2)}$, because they have the same structure as and are treated similarly to the hyperbolic case.

\textbf{\textit{Step 3: }}\textit{The hyperbolic terms.} Since both terms in $I^{hyp}$ are  treated similarly, we provide details only for the first one. We have
\begin{align*}
 & 2\int_{s}^{t}\int\psi(y,\eta)\vr^{(1)}(x,y,0,\eta,r)\vr^{(2)}(x',y,\xi',\eta,r)q^{(2)}(x',\xi',r)dxdx'd\xi'dyd\eta dr\\
 & =\int_{s}^{t}\int\psi(y,\eta)\partial_{\xi}\sgn(\xi)\vr^{(1)}(x,y,\xi,\eta,r)\vr^{(2)}(x',y,\xi',\eta,r)q^{(2)}(x',\xi',r)dxd\xi dx'd\xi'dyd\eta dr\\
 & =-\int_{s}^{t}\int\psi(y,\eta)\sgn(\xi)\partial_{\xi}\vr^{(1)}(x,y,\xi,\eta,r)\vr^{(2)}(x',y,\xi',\eta,r)q^{(2)}(x',\xi',r)dxd\xi dx'd\xi'dyd\eta dr\\
 & =\int_{s}^{t}\int\psi(y,\eta)\left(\int\sgn(\xi)\vr^{(1)}(x,y,\xi,\eta,r)dxd\xi\right)\left(\int\partial_{\xi'}\vr^{(2)}(x',y,\xi',\eta,r)q^{(2)}(x',\xi',r)dx'd\xi'\right)dyd\eta dr\\
 & +\int_{s}^{t}Err^{(1)}(r)dr,
\end{align*}
with
\begin{align*}
Err^{(1)}(r):=-\int & \psi(y,\eta)\sgn(\xi)\Big(\partial_{\xi}\vr^{(1)}(x,y,\xi,\eta,r)\vr^{(2)}(x',y,\xi',\eta,r)\\
 & +\vr^{(1)}(x,y,\xi,\eta,r)\partial_{\xi'}\vr^{(2)}(x',y,\xi',\eta,r)\Big)q^{(2)}(x',\xi',r)dxd\xi dx'd\xi'dyd\eta.
\end{align*}
Using the solution property \eqref{eq:kinetic} for $\chi^{(2)}$ and
\begin{align*}
\int\sgn(\xi)\vr^{(1)}(x,y,\xi,\eta,r)dxd\xi & =(\sgn\ast\vr^{v})(\eta)
\end{align*}
we find
\begin{align*}
 & 2\int_{s}^{t}\int\psi(y,\eta)\vr^{(1)}(x,y,0,\eta,r)\vr^{(2)}(x',y,\xi',\eta,r)q^{(2)}(x',\xi',r)dxd\xi'dx'dyd\eta dr\\
 & =\int_{s}^{t}\int\psi(y,\eta)(\sgn\ast\vr^{v})(\eta)\left(\int\partial_{\xi'}\vr^{(2)}(x',y,\xi',\eta,r)q^{(2)}(x',\xi',r)dx'd\xi'\right)dyd\eta dr\\
 & +\int_{s}^{t}Err^{(1)}(r)dr\\
 & =-\int\psi(y,\eta)(\sgn\ast\vr^{v})(\eta)(\chi^{(2)}\ast\vr^{(2)})(y,\eta,\cdot)|_{s}^{t}dyd\eta\\
 & +\int_{s}^{t}\int(\sgn\ast\vr^{v})(\eta)\left(\psi(y,\eta)\sum_{i,j=1}^{N}\int\chi^{(2)}(x',\xi',t)a_{ij}(\xi')\partial_{x_{i}'x_{j}'}\vr^{(2)}(x',y,\xi',\eta,t)dx'd\xi'\right)dyd\eta dr\\
 & +\int_{s}^{t}Err^{(1)}(r)dr.
\end{align*}
Since
\begin{align}\label{eq:x-y-switch}
\partial_{x_{i}}\vr^{(i)}(x,y,\xi,\eta,t)=  -\partial_{y_{i}}\vr^{(i)}(x,y,\xi,\eta,t)\quad\text{and}\quad
\partial_{x_{i}x_{j}}^{2}\vr^{(i)}(x,y,\xi,\eta,t)=  \partial_{y_{i}y_{j}}^{2}\vr^{(i)}(x,y,\xi,\eta,t) %\nonumber 
\end{align}
we have
\begin{align*}
 & \int_{s}^{t}\int(\sgn\ast\vr^{v})(\eta)\psi(y,\eta)\sum_{i,j=1}^{N}\int\chi^{(2)}(x',\xi',t)a_{ij}(\xi')\partial_{x_{i}'x_{j}'}\vr^{(2)}(x',y,\xi',\eta,t)dx'd\xi'dyd\eta dr\\
 & =\int_{s}^{t}\int(\sgn\ast\vr^{v})(\eta)\partial_{y_{i}}\psi(y,\eta)\sum_{i,j=1}^{N}\int\chi^{(2)}(x',\xi',t)a_{ij}(\xi')\partial_{x_{j}'}\vr^{(2)}(x',y,\xi',\eta,t)dx'd\xi'dyd\eta dr\\
 & =:\int_{s}^{t}Err^{loc,(1)}(r)dr.
\end{align*}

In conclusion,
\begin{align*}
\int_{s}^{t}I^{hyp}(r)dr= & -\int\psi(y,\eta)(\sgn\ast\vr^{v})(\eta)(\chi^{(1)}\ast\vr^{(1)})(y,\eta,\cdot)|_{s}^{t}dyd\eta\\
 & -\int\psi(y,\eta)(\sgn\ast\vr^{v})(\eta)(\chi^{(2)}\ast\vr^{(2)})(y,\eta,\cdot)|_{s}^{t}dyd\eta\\
 & +\int_{s}^{t}Err^{(1)}(r)+Err^{(2)}(r)dr+\int_{s}^{t}Err^{loc,(1)}(r)+Err^{loc,(2)}(r)dr,
\end{align*}
with $Err^{(2)}$, $Err^{loc,(2)}$ defined analogously to $Err^{(1)}$, $Err^{loc,(1)}$ respectively.

\textbf{\textit{Step 4: }}\textit{The parabolic terms.} Using \eqref{eq:x-y-switch} we get
\begin{align*}
 & \sum_{i,j=1}^{N}\int\psi(y,\eta)\chi^{(1)}(x,\xi,t)\vr^{(1)}(x,y,\xi,\eta,t)\chi^{(2)}(x',\xi',t)a_{ij}(\xi')\partial_{x'_{i}x'_{j}}\vr^{(2)}(x',y,\xi',\eta,t)dxd\xi dx'd\xi'dyd\eta\\
 & =-\sum_{i,j=1}^{N}\int\psi(y,\eta)\chi^{(1)}(x,\xi,t)\chi^{(2)}(x',\xi',t)a_{ij}(\xi')\partial_{x_{i}}\vr^{(1)}(x,y,\xi,\eta,t)\partial_{x'_{j}}\vr^{(2)}(x',y,\xi',\eta,t)dxd\xi dx'd\xi'dyd\eta\\
 & +Err^{loc,(3)},
\end{align*}
where
\[
Err^{loc,(3)}:=-\sum_{i,j=1}^{N}\int\partial_{y_{i}}\psi(y,\eta)\chi^{(1)}(x,\xi,t)\chi^{(2)}(x',\xi',t)a_{ij}(\xi')\vr^{(1)}(x,y,\xi,\eta,t)\partial_{x'_{j}}\vr^{(2)}(x',y,\xi',\eta,t)dxd\xi dx'd\xi'dyd\eta.
\]
Hence,
\begin{align}
 & -2\sum_{i,j=1}^{N}\int[\psi(y,\eta)\chi^{(1)}(x,\xi,t)\vr^{(1)}(x,y,\xi,\eta,t)\chi^{(2)}(x',\xi',t)a_{ij}(\xi')\partial_{x'_{i}x'_{j}}\vr^{(2)}(x',y,\xi',\eta,t)\nonumber \\
 & +\psi(y,\eta)\chi^{(2)}(x',\xi',t)\vr^{(2)}(x',y,\xi',\eta,t)\chi^{(1)}(x,\xi,t)a_{ij}(\xi)\partial_{x_{i}x_{j}}^{2}\vr^{(1)}(x,y,\xi,\eta,t)]dxd\xi dx'd\xi'dyd\eta\label{eq:par_mol_splitup}\\
 & =4\sum_{i,j=1}^{N}\sum_{k=1}^{N}\int[\psi(y,\eta)\chi^{(1)}(x,\xi,t)\chi^{(2)}(x',\xi',t)\sigma_{ik}(\xi)\s_{kj}(\xi')\nonumber \\
 & \partial_{x_{i}}\vr^{(1)}(x,y,\xi,\eta,t)\partial_{x'_{j}}\vr^{(2)}(x',y,\xi',\eta,t)]dxd\xi dx'd\xi'dyd\eta\nonumber \\
 & +Err^{par}+Err^{loc,(3)}+Err^{loc,(4)},\nonumber 
\end{align}
with
\begin{align*}
Err^{par}:=2\sum_{i,j=1}^{N}\int & \psi(y,\eta)\chi^{(1)}(x,\xi,t)\chi^{(2)}(x',\xi',t)\left(a_{ij}(\xi)-2\sum_{k=1}^{N}\sigma_{ik}(\xi)\s_{kj}(\xi')+a_{ij}(\xi')\right)\\
 & \partial_{x_{i}}\vr^{(1)}(x,y,\xi,\eta,t)\partial_{x'_{j}}\vr^{(2)}(x',y,\xi',\eta,t)dxd\xi dx'd\xi'dyd\eta
\end{align*}
and $Err^{loc,(4)}$ defined analogously to $Err^{loc,(3)}$. 

We now use the chain rule \eqref{eq:chain-rule}  and get
\begin{align*}
 & \sum_{i,j=1}^{N}\int\psi(y,\eta)\chi^{(1)}(x,\xi,t)\chi^{(2)}(x',\xi',t)\sigma_{ik}(\xi)\s_{kj}(\xi')\partial_{x_{i}}\vr^{(1)}(x,y,\xi,\eta,t)\partial_{x'_{j}}\vr^{(2)}(x',y,\xi',\eta,t)dxd\xi dx'd\xi'dyd\eta\\
 & =\int\psi(y,\eta)\left(\sum_{i=1}^{N}\int\chi^{(1)}(x,\xi,t)\sigma_{ik}(\xi)\partial_{x_{i}}\vr^{(1)}(x,y,\xi,\eta,t)dxd\xi\right)\\
 & \left(\sum_{j=1}^{N}\int\chi^{(2)}(x',\xi',t)\s_{kj}(\xi')\partial_{x'_{j}}\vr^{(2)}(x',y,\xi',\eta,t)dx'd\xi'\right)dyd\eta\\
 & =\int\psi(y,\eta)\left(\sum_{i=1}^{N}\int\partial_{x_{i}}\b_{ik}(u^{(1)}(x,t))\vr^{(1)}(x,y,u^{(1)}(x,t),\eta,t)dx\right)\\[1.5mm]
 & \left(\sum_{j=1}^{N}\int\partial_{x'_{j}}\b_{kj}(u^{(2)}(x',t))\vr^{(2)}(x',y,u^{(2)}(x',t),\eta,t)dx'\right)dyd\eta,
\end{align*}
and, therefore,
\begin{align*}
 & -2\sum_{i,j=1}^{N}\int\big(\psi(y,\eta)\chi^{(1)}(x,\xi,t)\vr^{(1)}(x,y,\xi,\eta,t)\chi^{(2)}(x',\xi',t)a_{ij}(\xi')\partial_{x_{i}x_{j}}^{2}\vr^{(2)}(x',y,\xi',\eta,t)\\
 & +\psi(y,\eta)\chi^{(2)}(x',\xi',t)\vr^{(2)}(x',y,\xi',\eta,t)\chi^{(1)}(x,\xi,t)a_{ij}(\xi)\partial_{x_{i}x_{j}}^{2}\vr^{(1)}(x,y,\xi,\eta,t)\big)dxd\xi dx'd\xi'dyd\eta\\
 & =4\sum_{i,j=1}^{N}\sum_{k=1}^{N}\int\psi(y,\eta)\Bigg(\int\partial_{x_{i}}\b_{ik}(u^{(1)}(x,t))\vr^{(1)}(x,y,u^{(1)}(x,t),\eta,t)\\
 & \hskip120pt\partial_{x_{j}}\b_{kj}(u^{(2)}(x',t))\vr^{(2)}(x',y,u^{(2)}(x',t),\eta,t)dxdx'\Bigg)dyd\eta\\
 & +Err^{par}+Err^{loc,(3)}+Err^{loc,(4)}.
\end{align*}
Using \eqref{eq:diss_meas} we analyze next the last two terms in $I^{par}$. We have:
\begin{align*}
 & \int\psi(y,\eta)\d(\xi-u^{(1)}(x,t))\vr^{(1)}(x,y,\xi,\eta,t)\vr^{(2)}(x',y,\xi',\eta,t)n^{(2)}(x',\xi',t)dxd\xi dx'd\xi'dyd\eta\\
 & +\int\psi(y,\eta)\d(\xi'-u^{(2)}(x',t))\vr^{(2)}(x',y,\xi',\eta,t)\vr^{(1)}(x,y,\xi,\eta,t)n^{(1)}(x,\xi,t)dxd\xi dx'd\xi'dyd\eta\\
 & =\int\psi(y,\eta)\vr^{(1)}(x,y,u^{(1)}(x,t),\eta,t)\vr^{(2)}(x',y,\xi',\eta,t)n^{(2)}(x',\xi',t)dxdx'd\xi'dyd\eta\\
 & +\int\psi(y,\eta)\vr^{(2)}(x',y,u^{(2)}(x',t),\eta,t)\vr^{(1)}(x,y,\xi,\eta,t)n^{(1)}(x,\xi,t)dxd\xi dx'dyd\eta\\
 & =\int\psi(y,\eta)\vr^{(1)}(x,y,u^{(1)}(x,t),\eta,t)\vr^{(2)}(x',y,u^{(2)}(t,x'),\eta,t)\sum_{k=1}^{N}\left(\sum_{i=1}^{N}\partial_{x'_{i}}\b_{ik}(u^{(2)}(t,x'))\right)^{2}dxdx'dyd\eta\\
 & +\int\psi(y,\eta)\vr^{(2)}(x',y,u^{(2)}(x',t),\eta,t)\vr^{(1)}(x,y,u^{(1)}(t,x),\eta,t)\sum_{k=1}^{N}\left(\sum_{i=1}^{N}\partial_{x_{i}}\b_{ik}(u^{(1)}(t,x))\right)^{2}dxdx'dyd\eta.
\end{align*}
Since
\begin{align*}
 & \left(\sum_{i=1}^{N}\partial_{x_{i}}\b_{ik}(u^{(1)}(t,x))\right)^{2}+\left(\sum_{i=1}^{N}\partial_{x_{i}}\b_{ik}(u^{(2)}(t,x'))\right)^{2}\ge2\sum_{i,j=1}^{N}\partial_{x_{i}}\b_{ik}(u^{(1)}(t,x))\partial_{x'_{j}}\b_{kj}(u^{(2)}(t,x')),
\end{align*}
we obtain
\begin{align*}
 & -2\int\psi(y,\eta)\d(\xi-u^{(1)}(x,t))\vr^{(1)}(x,y,\xi,\eta,t)\vr^{(2)}(x',y,\xi',\eta,t)n^{(2)}(x',\xi',t)dxd\xi dx'd\xi'dyd\eta\\
 & +\int \psi(y,\eta)\d(\xi'-u^{(2)}(x',t))\vr^{(2)}(x',y,\xi',\eta,t)\vr^{(1)}(x,y,\xi,\eta,t)n^{(1)}(x,\xi,t)dxd\xi dx'd\xi'dyd\eta\\
 & \le-4\int\psi(y,\eta)\vr^{(1)}(x,y,u^{(1)}(x,t),\eta,t)\vr^{(2)}(x',y,u^{(2)}(t,x'),\eta,t)\\
 & \sum_{k=1}^{N}\left(\sum_{i,j=1}^{N}\partial_{x_{i}}\b_{ik}(u^{(1)}(t,x))\partial_{x'_{j}}\b_{jk}(u^{(2)}(t,x'))\right)dxdx'dyd\eta,
\end{align*}
and, then,
\begin{align*}
\int_{s}^{t}I^{par}(r)dr & \le\int_{s}^{t}Err^{par}(r)+Err^{loc,(3)}(r)+Err^{loc,(4)}(r)dr.
\end{align*}

\textbf{\textit{Step 5:}} \textit{The end of the proof.} Combining the estimates in the last two steps we find, now explicitly writing the $\ve,\psi,\d$-dependence,
\begin{align}
 & -2\int\psi(y,\eta)(\chi^{(1)}\ast\vr_{\ve,\d}^{(1)})(y,\eta,\cdot)(\chi^{(2)}\ast\vr_{\ve,\d}^{(2)})(y,\eta,\cdot)dyd\eta\Big|_{s}^{t}\nonumber \\
 & \le-\int\psi(y,\eta)(\sgn\ast\vr_{\d}^{v})(\eta)(\chi^{(1)}\ast\vr_{\ve,\d}^{(1)})(y,\eta,\cdot)|_{s}^{t}dyd\eta\nonumber \\
 & -\int\psi(y,\eta)(\sgn\ast\vr_{\d}^{v})(\eta)(\chi^{(2)}\ast\vr_{\ve,\d}^{(2)})(y,\eta,\cdot)|_{s}^{t}dyd\eta\label{eq:unique_1-2}\\
 & +\int_{s}^{t}\Big(Err_{\ve,\psi,\d}^{(1)}(r)+Err_{\ve,\psi,\d}^{(2)}(r)+Err_{\ve,\psi,\d}^{(1,2)}(r)+Err_{\ve,\psi,\d}^{par}(r)\Big)dr\nonumber \\
 & +\int_{s}^{t}\Big(Err_{\ve,\psi,\d}^{loc,(1)}(r)+Err_{\ve,\psi,\d}^{loc,(2)}(r)+Err_{\ve,\psi,\d}^{loc,(3)}(r)+Err_{\ve,\psi,\d}^{loc,(4)}(r)\Big)dr,\nonumber 
\end{align}
that is 
\begin{align*}
G_{\ve,\psi,\d}(t)-G_{\ve,\psi,\d}(s) & \le\int_{s}^{t}\Big(Err_{\ve,\psi,\d}^{(1)}(r)+Err_{\ve,\psi,\d}^{(2)}(r)+Err_{\ve,\psi,\d}^{(1,2)}(r)+Err_{\ve,\psi,\d}^{par}(r)\Big)dr\\
 & +\int_{s}^{t}\Big(Err_{\ve,\psi,\d}^{loc,(1)}(r)+Err_{\ve,\psi,\d}^{loc,(2)}(r)+Err_{\ve,\psi,\d}^{loc,(3)}(r)+Err_{\ve,\psi,\d}^{loc,(4)}(r)\Big)dr.
\end{align*}
It follows from \eqref{eq:Err_hyp_1}, \eqref{eq:err12_bd}, Lemma~ \ref{lem:par_err} and and Lemma \ref{lem:err-loc-2} in Appendix \ref{sec:err-est} that, for $\psi\in C_{c}^{\infty}(\R^{N+1})$ and $\ve>0$ fixed, 
\begin{align}
 & \lim_{\d\to0}G_{\ve,\psi,\d}(t)-\lim_{\d\to0}G_{\ve,\psi,\d}(s)\nonumber \\
 & \lesssim\int_{s}^{t}\int\|\partial_{\eta}\psi(\cdot,\xi)\|_{C(\R^{N})}(q^{(1)}+q^{(2)})(x,\xi,r)dxd\xi dr+\ve^{-1}\|z^{(1)}-z^{(2)}\|_{C([s,t];\R^{N})}\label{eq:eps-psi-ineq}\\
 & +\int_{s}^{t}\Big(Err_{\ve,\psi}^{loc,(1)}(r)+Err_{\ve,\psi}^{loc,(2)}(r)+Err_{\ve,\psi}^{loc,(3)}(r)+Err_{\ve,\psi}^{loc,(4)}(r)\Big)dr.\nonumber 
\end{align}
Choosing $\psi_{R}\in C_{c}^{\infty}(\R^{N+1})$ such that
\begin{equation}\label{eq:psi_loc}
\psi_{R}(y,\eta)=\begin{cases}
1, & \text{if }|(y,\eta)|\le R\\[1.5mm]
0, & \text{if }|(y,\eta)|>R+1,
\end{cases}\quad\text{and}\quad\|D\psi_{R}\|\le1,
\end{equation}
yields 
\[
\lim_{R\to\infty}\int\|\partial_{\eta}\psi_R(\cdot,\xi)\|_{C(\R^{N})}(q^{(1)}+q^{(2)})(x,\xi,t)dxd\xi=0.
\]
In view of Lemma \ref{lem:err-loc} we also have
\[
\lim_{R\to\infty}\int_{s}^{t}\Big(Err_{\ve,\psi_R}^{loc,(1)}(r)+Err_{\ve,\psi_R}^{loc,(2)}(r)+Err_{\ve,\psi_R}^{loc,(3)}(r)+Err_{\ve,\psi_R}^{loc,(4)}(r)\Big)dr=0
\]
and it is easy to see that 
\[
\lim_{R\to\infty}\lim_{\d\to0}G_{\ve,\psi_{R},\d}(t)=G_{\ve}(t).
\]
Hence, letting  $R\to\infty$ in \eqref{eq:eps-psi-ineq} yields
\begin{align}
 & G_{\ve}(t)-G_{\ve}(s)\le \ve^{-1}\|z^{(1)}-z^{(2)}\|_{C([s,t];\R^{N})},\label{eq:first_est-2-1}
\end{align}
which finishes the proof by step one.

\section{The existence of pathwise entropy solutions\label{sec:existence}}

We prove the existence of pathwise entropy solutions to \eqref{eq:scl} and establish the a priori estimates stated in Theorem \ref{thm:existence}. The solution is found as a limit of solutions of a three-step approximation procedure. In the first step, the initial condition $u_{0}\in(BV\cap L^{\infty})(\R^{N})$ is approximated by smooth functions $u_{0}^{\d}\in C_{c}^{\infty}(\R^{N})$ such that $\|u_{0}^{\d}\|_2\le \|u_{0}\|_2$. In the second step, the driving signal $z$ is approximated by smooth driving signals $z^{(l)}$. In the third step, $A$ is approximated by  $A^\ve +\ve I$, where $I$ is the $N\times N$ identity matrix  and $A^{\ve}(\xi):=A\ast\vp^{\ve}(\xi)$ with  $\vp^{\ve}$ being a standard Dirac family. 

In conclusion, we consider the smooth solution $u^{(\ve,\d,l)}$ to
\begin{equation}
\begin{cases}
\partial_{t}u^{(\ve,\d,l)}+{\displaystyle \sum_{i=1}^{N}\partial_{x_{i}}F^{i}(u^{(\ve,\d,l)})(\dot{z}^{(l)})^{i}=\div(A^{\ve}(u^{(\ve,\d,l)})Du^{(\ve,\d,l)})+\ve\D u^{(\ve,\d,l)}
\quad\text{ in }\quad\R^{N}\times(0,T),}\\[2mm]
u^{(\ve,\d,l)}=u_{0}^{\d}\quad\text{ on }\quad\R^{N}\times\{0\}.
\end{cases}\label{eq:scl-1}
\end{equation}
The existence and uniqueness of $u^{(\ve,\d,l)}$ for each fixed $\ve, \d$ and $l$ is classical; see, for example, Volpert and Hudjaev \cite{VH69}.   The proof of the bounds in Theorem \ref{thm:existence} is based on establishing similar bounds for $u^{(\ve,\d,l)}$ and then passing to the limit.

\subsection*{Proof of Theorem \ref{thm:existence}.} Since the proof is long, we divide it into several steps.

\textbf{\textit{Step 1:}}\textit{ The approximating equation \eqref{eq:scl-1}.} The  kinetic function $\chi^{(\ve,\d,l)}(x,\xi,t):=\chi(u^{(\ve,\d,l)}(x,t),\xi)$ solves
\begin{equation}
\begin{cases}
\partial_{t}\chi^{(\ve,\d,l)}+f^{(l)}(\xi,t)\cdot D_{x}\chi^{(\ve,\d,l)}-\sum\limits _{i,j=1}^{d}a_{ij}^{\ve}(\xi)\partial_{x_{i}x_{j}}^{2}\chi^{(\ve,\d,l)}-\ve\D\chi^{(\ve,\d,l)}=\partial_{\xi}q^{(\ve,\d,l)}\ \text{in}\ \R^{N}\times\R\times(0,T),\\[1mm]
\chi^{(\ve,\d,l)}=\chi(u_{0}^{\d}(\cdot),\cdot)\ \text{ on }\ \R^{N}\times\R\times\{0\},
\end{cases}\label{eq:kinetic_form-approx}
\end{equation}
where 
\[
f^{(l)}(\xi,t):=F'(\xi)\dot{z}^{(l)}(t)
\]
and 
  $$q^{(\ve,\d,l)}=m^{(\ve,\d,l)}+n^{(\ve,\d,l)}$$
with the entropy dissipation and parabolic  measures  $m^{(\ve,\d,l)}$  and $n^{(\ve,\d,l)}$ given respectively by
\[
m^{(\ve,\d,l)}(x,\xi,t)=\d(\xi-u^{(\ve,\d,l)})\ve|Du^{(\ve,\d,l)}|^{2}
\]
and 
\[
n^{(\ve,\d,l)}(x,\xi,t)=\d(\xi-u^{(\ve,\d,l)})\sum_{k=1}^{N}\left(\sum_{i=1}^{N}\partial_{x_{i}}\b_{ik}^{\ve}(u^{(\ve,\d,l)})\right)^{2}.
\]
Moreover, it is shown in  \cite{VH69} that,  for all $t\ge0$ and $p\in[1,\infty]$, there exist  constants $C^{(l)}>0$ such that
\begin{align}
\|u^{(\ve,\d,l)}(t)\|_{L^{p}}  \le\|u_{0}\|_{p},\quad\|Du^{(\ve,\d,l)}(t)\|_{L^{1}}  \le BV(u_{0})\quad\text{and}\quad\label{eq:eps-bounds}
\|\partial_{t}u^{(\ve,\d,l)}(t)\|_{L^{1}}  \le C^{(l)}. %\nonumber 
\end{align}
Multiplying \eqref{eq:kinetic_form-approx} by $\xi$ and integrating yields 
\begin{align}\label{eq:measure_par_est-2}
\int_{\R^{N}\times\R\times\R_{+}}m^{(\ve,\d,l)}(x,\xi,t)dxd\xi dt+\int_{\R^{N}\times\R_{+}}\sum_{k=1}^{N}\left(\sum_{i=1}^{N}\partial_{x_{i}}\b_{ik}^{\ve}(u^{(\ve,\d,l)})\right)^{2}dxdt & \le\frac{1}{2}\|u_{0}^{\d}\|_{2}^{2}\le\frac{1}{2}\|u_{0}\|_{2}^{2}.
\end{align}
In view of Remark \ref{lem:kinetic-pathwise}, $u^{(\ve,\d,l)}$ is a pathwise entropy solution, that is, in the sense of distributions in $t$,
\begin{align}
\frac{d}{dt}(\vr^{(l)}\ast\chi^{(\ve,\d,l)})(y,\eta,t)= & \sum_{i,j=1}^{N}\int\chi^{(\ve,\d,l)}(x,\xi,t)a_{ij}^{\ve}(\xi)\partial_{x_{i}x_{j}}^{2}\vr^{(l)}(x,y,\xi,\eta,t)dxd\xi\nonumber \\%\label{eq:kinetic-approx-2-1}\\
 & +\ve\int\chi^{(\ve,\d,l)}(x,\xi,t)\D\vr^{(l)}(x,y,\xi,\eta,t)dxd\xi \label{eq:kinetic-approx-2-1} \\
 & -\int\partial_{\xi}\vr^{(l)}(x,y,\xi,\eta,t)q^{(\ve,\d,l)}(x,\xi,t)dxd\xi.\nonumber 
\end{align}
Elementary calculations also give
\begin{align}
 & \sum_{i=1}^{N}\int\chi^{(\ve,\d,l)}(x,\xi,t)\sigma_{ik}^{\ve}(\xi)\partial_{x_{i}}\vr^{(l)}(x,y,\xi,\eta,t)dxd\xi\label{eq:chain-eps-delta-l}\\
 & =-\sum_{i=1}^{N}\int\partial_{x_{i}}\b_{ik}^{\ve}(u^{(\ve,\d,l)}(x,t))\vr^{(l)}(x,y,u^{(\ve,\d,l)}(x,t),\eta,t)dx.\nonumber 
\end{align}

\textbf{\textit{Step 2:}}\textit{ The singular degenerate limit $\ve\to0$.} ~In view of the estimates \eqref{eq:eps-bounds} and  \eqref{eq:measure_par_est-2}, there exists subsequences, which we  denote again by  $u^{(\ve,\d,l)}, m^{(\ve,\d,l)}$ and $n^{(\ve,\d,l)}$, such that, as $\ve \to 0$,
\[
u^{(\ve,\d,l)}\to u^{(\d,l)} \ \text{in }C([0,T];L^{1}(\R^{N})), \  m^{(\ve,\d,l)}\rightharpoonup m^{(\d,l)} \ \text{ and } \  n^{(\ve,\d,l)}\rightharpoonup n^{(\d,l)} \text{ weak $\star$},
\]
\[
\sum_{i=1}^{N}\partial_{x_{i}}\b_{ik}^{\ve}(u^{(\ve,\d,l)})\rightharpoonup\sum_{i=1}^{N}\partial_{x_{i}}\b_{ik}(u^{(\d,l)})\quad\text{in }L^{2}([0,T]\times\R^{N}), 
\]
and thus
\[
n^{(\ve,\d,l)}\rightharpoonup n^{(\d,l)}=\d(\xi-u^{(\d,l)})\sum_{k=1}^{N}\left(\sum_{i=1}^{N}\partial_{x_{i}}\b_{ik}(u^{(\d,l)})\right)^{2}.
\]
Passing to the limit in \eqref{eq:kinetic-approx-2-1} and \eqref{eq:chain-eps-delta-l} yields respectively \eqref{eq:kinetic} for $z=z^{(l)}$ and 
\[
\sum_{i=1}^{N}\int\chi^{(\d,l)}(x,\xi,t)\sigma_{ik}(\xi)\partial_{x_{i}}\vr^{(l)}(x,y,\xi,\eta,t)dxd\xi=\sum_{i=1}^{N}\int\partial_{x_{i}}\b_{ik}(u^{(\d,l)}(x,t))\vr^{(l)}(x,y,u^{(\d,l)}(x,t),\eta,t)dx.
\]
Using the lower semicontinuity with respect to weak convergence, we also get %obtain 
\[
\int_{0}^{T}\int_{\R^{N}\times\R}m^{(\d,l)}(x,\xi,t)dxd\xi dt+\int_{0}^{T}\int_{\R^{N}}\sum_{k=1}^{N}\left(\sum_{i=1}^{N}\partial_{x_{i}}\b_{ik}(u^{(\d,l)})\right)^{2}dxdt\le\frac{1}{2}\|u_{0}\|_{2}^{2}.
\]

\textbf{\textit{Step 3:}} \textit{The rough signal limit $l\to\infty$ and initial condition $\d\to0$.} Theorem \ref{thm:unique} yields that, as $l,m\to\infty$, 
\begin{align*}
\|u^{(\d,l)}-u^{(\d,m)}\|_{C([0,T];L^{1}(\R^{N}))}  \le C\|z^{(l)}-z^{(m)}\|^{1/2}_{C([0,T];\R^{N})} \to 0,
\end{align*}
that is, $u^{(\d,l)}$ is a Cauchy sequence in $C([0,T];L^{1}(\R^{N}))$ and thus has, as $l \to \infty$, a limit $u^{(\d)}$ in $C([0,T];L^{1}(\R^{N})).$ 

We then argue as in Step~$2$ to obtain a pathwise entropy solution $u^{(\d)}$ to \eqref{eq:scl} with initial condition $u_{0}^{\d}$. Again, Theorem \ref{thm:unique} yields that, as $\d_{1},\d_{2}\to0$,  
\begin{align*}
\|u^{(\d_{1})}-u^{(\d_{1})}\|_{C([0,T];L^{1}(\R^{N}))}  \le C\|u_{0}^{\d_{1}}-u_{0}^{\d_{2}}\|_{L^{1}(\R^{N})}
\end{align*}
and  arguing as before we obtain a pathwise entropy solution $u$. 

The bound \eqref{eq:measure_bound} follows easily by testing with $\xi^{[m+1]}=|\xi|^{m}\xi$ and a cut-off argument. Since the argument is routine, we leave the  details to the reader.

\section{Long-time behavior -- the proof of Theorem \ref{thm:ltb}\label{sec:ltb}}

The general approach is based on averaging techniques related to the classical averaging lemmata for scalar conservation laws. Typically, the proofs of averaging lemmata use Fourier transforms in space and time. In the stochastic context this is not possible due to the time-dependence of the flux. Therefore, we only use Fourier transforms in the spatial variable $x$. This technique was developed by Bouchut and Desvillettes \cite{BD99} and was used for semilinear stochastic scalar conservation laws by Debussche and Vovelle in \cite{DV13}. Although our proof follows the arguments of \cite{DV13} and more closely \cite{GS14-2}, new difficulties arise because of the second order term in \eqref{eq:scl-torus}. In particular, we have to adapt the important technical result (Lemma \ref{lem:integral_estimate-1}), which relies on the genuine nonlinearity condition \eqref{flux}. In what follows we are brief about parts similar to \cite{GS14-2} and we concentrate on the differences. Since the proof is rather long, we divide it in several subsections. We also remark that we use properties of the Brownian paths and our approach does not extend to general continuous time dependence.

\subsection*{Split-up of the solution}~Without loss of generality, we restrict to initial conditions with zero average, that is we assume that %case, 
\begin{equation}
\int u_{0}(x)dx=0;\label{takis2}
\end{equation}
the case of non-zero spatial average can be easily reduced to this case. Moreover, in view of  \eqref{eq:gen_kinetic_unique-2} and the density of $L^{\infty}(\TT^{N})$ in $L^{1}(\TT^{N})$,  it is enough to consider $u_{0}\in L^{\infty}(\TT^{N})$. 

As a regularizing term we use the fractional Laplace operator (see \cite{BD99,DV13} for similar types of arguments) 
\begin{equation}
B:=(-\D)^{\alpha}+Id\ \text{ with}\ \a\in(0,1].\label{takis3}
\end{equation}

Let $S_{A_{\gamma}(\xi)}(s,t)$ denote the solution operator of 
\begin{equation}\label{eq:semigroup-eqn}
\partial_{t}v+\sum_{i}f^{i}(\xi)\dot{\b}^{i}(t)\partial_{x_{i}}v-\sum_{i,j=1}^{d}a_{ij}(\xi)\partial_{x_{i}x_{j}}^{2}v+\gamma Bv=0\ \text{in}\ \TT^{N}\times\R\times(s,\infty).
\end{equation}
Since $A(\xi):D^{2}=\sum_{i,j=1}^{d}a_{ij}(\xi)\partial_{x_{i}x_{j}}^{2}$ and $B$ commute, it is  immediate that, for all $f$ in the appropriate function space, 
\begin{align}
S_{A_{\gamma}(\xi)}(s,t)f(x) & =(e^{-(t-s)(A(\xi):D^{2}+\g B)}f)\left(x-f(\xi)(\b(t)-\b(s))\right),\label{eq:semigroup}\\
 & =(e^{-(t-s)A(\xi):D^{2}}e^{-(t-s)\g B}f)\left(x-f(\xi)(\b(t)-\b(s))\right),\nonumber 
\end{align}
where $e^{tA}$ denotes the solution semigroup to the operator $A$ and 
\[
f(\xi)(\b(t)-\b(s))=(f^{1}(\xi)(\beta^{1}(t)-\beta^{1}(s)),\dots,f^{N}(\xi)(\beta^{N}(t)-\beta^{N}(s))).
\]
For $n\in\Z^{N}$, the Fourier transform of $S_{A_{\gamma}(\xi)}$ corresponds to multiplication by 
\[
\exp\bigg({-if(\xi)(\beta(t)-\beta(s))\cdot n-\left(nA(\xi)n-\gamma(|n|^{2\alpha}+1)\right)(t-s)}\bigg).
\]
It follows from the variation of constants formula that, for $t\in[0,T]$ and $\vp\in C^{\infty}(\TT^{N})$,
\begin{align}
\int_{\TT^{N}}\vp(x)u(x,t)dx & =\int_{{\TT^{N}}\times\R}\vp(x)\chi(x,\xi,t)dxd\xi\nonumber \\
 & =\int_{{\TT^{N}}\times\R}\vp(x)S_{A_{\gamma}(\xi)}(0,t)\chi_{0}(x,\xi)dxd\xi+\int_{0}^{t}\int_{{\TT^{N}}\times\R}\gamma B(S_{A_{\gamma}(\xi)}^{*}(s,t)\vp)(x)\chi(x,\xi,s)dxd\xi ds\label{eq:splitup}\\
 & -\int_{0}^{t}\int_{{\TT^{N}}\times\R}\partial_{\xi}(S_{A_{\gamma}(\xi)}^{*}(s,t)\vp)(x)dq(x,\xi,s),\nonumber 
\end{align}
 where $S_{A_{\gamma}(\xi)}^{*}$ denotes the adjoint semigroup to $S_{A_{\gamma}(\xi)}$ and for simplicity we set again $q=m+n$. Noting that \eqref{eq:measure_bound} implies $q(\TT^N\times\{0\}\times \R_+)=0$ and that $\xi \mapsto (S_{A_{\gamma}(\xi)}^{*}(s,t)\vp)(x)$ is continuously differentiable on $\R\setminus\{0\}$, \eqref{eq:splitup} can be justified following the same arguments as in \cite[Appendix C]{GS14-2} in combination with dominated convergence based on \eqref{eq:measure_bound} and \eqref{takis data3}. We note that, in comparison to \cite{GS14-2}, the additional parabolic term of \eqref{eq:scl-torus} is represented in \eqref{eq:splitup} via the parabolic term in the semigroup $S_{A_{\gamma}(\xi)}$. 

Recall that, a.s. in $\omega$, $u\in C([0,\infty);L^{1}(\TT^{N}))$. Accordingly, in the sense of distributions in $x$, we have 
\begin{equation}
u(t)=u^{0}(t)+u^{1}(t)+Q(t),\label{takis5}
\end{equation}
where, for $\vp\in C^{\infty}(\TT^{N})$, 
\begin{align}
\<u^{0}(t),\vp\> & :=\int_{{\TT^{N}}\times\R}\vp(x)S_{A_{\gamma}(\xi)}(0,t)\chi_{0}(x,\xi)dxd\xi,\label{takis6}\\
\<u^{1}(t),\vp\> & :=\int_{0}^{t}\int_{{\TT^{N}}\times\R}\gamma B(S_{A_{\gamma}(\xi)}^{*}(s,t)\vp)(x)\chi(x,\xi,s)dxd\xi ds\label{takis7}\\
\<Q(t),\vp\> & :=-\int_{0}^{t}\int_{{\TT^{N}}\times\R}\partial_{\xi}(S_{A_{\gamma}(\xi)}^{*}(s,t)\vp)(x)q(x,\xi,s)dxd\xi ds.\label{takis8}
\end{align}
Next we estimate  \eqref{takis6}, \eqref{takis7} and \eqref{takis8} separately using averaging techniques. In the analysis we need a basic integral estimate which is proved in Appendix~\ref{lemma}. For its statement it is convenient to introduce, for each measurable $b:\R\to\R^{N}$, $a:\R\to\R_{+}$ and $f\in L^{2}$, the function $\phi(\cdot;a,b,f):\R^{N}\to\R$ given by 
\begin{equation}
\phi(w;a,b,f):=e^{-\frac{|w|^{2}}{\d}}\int_{\R}e^{ib(\xi)\cdot w-\d a(\xi)}f(\xi)d\xi.\label{takis9}
\end{equation}

\begin{lem} \label{lem:integral_estimate-1}Let $b:\R\to\R^{N},a:\R\to\R_{+}$ be measurable functions, such that, for all $\ve>0,z\in\R^{N}$ and some nondecreasing $\iota:[0,\infty)\to[0,\infty)$ with $\lim_{\ve\to0}i(\ve)=0$, 
\[
|\{\xi\in\R:|b(\xi)-z|^{2}+a(\xi)\le\ve\}|\le\iota(\ve).
\]
Then, for all $\d>0$ and $f\in L^{2}(\R)$, 
\[
\|\phi(\cdot;a,b,f)\|_{L^{2}}^{2}\le\frac{\sqrt{\d\pi}}{4}\int_{0}^{\infty}e^{-\frac{\tau}{4}}\iota(\frac{\tau}{\d})d\tau\|f\|_{2}^{2}.
\]
\end{lem}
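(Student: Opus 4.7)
The plan is to expand the $L^{2}$-norm squared, integrate out the Gaussian factor in $w$ first, apply symmetrization, and then use the layer-cake representation together with the genuine non-degeneracy hypothesis on $(b,a)$ to convert the resulting integral into an expression involving $\iota$.

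First, I would write out $\|\phi\|_{L^{2}(\R^{N})}^{2}=\int\phi(w)\overline{\phi(w)}dw$ as a triple integral in $(w,\xi,\eta)$ by expanding the square and, after Fubini, compute the inner Gaussian integral in $w$ explicitly: for $c=b(\xi)-b(\eta)\in\R^{N}$,
\[
\int_{\R^{N}}e^{-2|w|^{2}/\d}\,e^{ic\cdot w}\,dw=\left(\tfrac{\pi\d}{2}\right)^{N/2}e^{-\d|c|^{2}/8}.
\]
This yields an expression of the form $C_{N}\d^{N/2}\iint f(\xi)\overline{f(\eta)}K(\xi,\eta)d\xi d\eta$, where $K(\xi,\eta):=e^{-\d(a(\xi)+a(\eta))-\d|b(\xi)-b(\eta)|^{2}/8}$ is nonnegative and symmetric in $(\xi,\eta)$.

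Next I would apply $|f(\xi)\overline{f(\eta)}|\le\tfrac{1}{2}(|f(\xi)|^{2}+|f(\eta)|^{2})$, use the symmetry of $K$ and the trivial bound $e^{-\d a(\xi)}\le1$ to reduce to $\|\phi\|_{L^{2}}^{2}\lesssim\d^{N/2}\|f\|_{2}^{2}\,\sup_{\xi}I(\xi)$, where
\[
I(\xi):=\int_{\R}e^{-\d h_{\xi}(\eta)}d\eta,\qquad h_{\xi}(\eta):=a(\eta)+|b(\xi)-b(\eta)|^{2}/8.
\]
Using the layer-cake formula $e^{-\d h}=\int_{0}^{\infty}e^{-t}\mathbb{1}_{\{\d h\le t\}}dt$ and Fubini,
\[
I(\xi)=\int_{0}^{\infty}e^{-t}\,|\{\eta\in\R:h_{\xi}(\eta)\le t/\d\}|\,dt.
\]
Since $\{h_{\xi}\le t/\d\}\subset\{\eta:|b(\eta)-b(\xi)|^{2}+a(\eta)\le 8t/\d\}$, the non-degeneracy hypothesis applied with $z=b(\xi)$ and $\ve=8t/\d$ gives $|\{h_{\xi}\le t/\d\}|\le\iota(8t/\d)$, and a final change of variables $\tau=8t$ (or its analogue, depending on the precise normalization) produces the integral against $\iota(\tau/\d)$ in the stated form.

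The main technical obstacle is the \emph{bookkeeping of numerical constants}: the Gaussian integration in $w$, the factor of $1/8$ appearing in the decay rate of the $w$-Fourier transform, and the rescaling step in the layer-cake argument must all be tracked carefully to recover the precise prefactor $\tfrac{\sqrt{\d\pi}}{4}$ and exponent $e^{-\tau/4}$ in the statement. A secondary point worth verifying is that $\xi\mapsto I(\xi)$ is measurable (so that the $\sup_{\xi}$ reduction is licit), and that the uniform-in-$z$ character of the non-degeneracy hypothesis is indeed all that is needed, with no further regularity assumed on $(b,a)$ beyond measurability.
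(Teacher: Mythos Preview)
Your approach is correct and yields an estimate of the right form, but it is genuinely different from the paper's and will \emph{not} recover the exact constants in the statement. The paper first passes to the Fourier side via Plancherel: it computes $\hat{\phi}(z)=\sqrt{\delta\pi}\int e^{-\frac{\delta}{4}|b(\xi)-2\pi z|^{2}-\delta a(\xi)}f(\xi)\,d\xi$, then applies Cauchy--Schwarz in $\xi$ for each fixed $z$, bounds the resulting weight integral $\int e^{-\delta(\frac{1}{4}|b(\xi)-2\pi z|^{2}+a(\xi))}d\xi$ uniformly in $z$ by the layer-cake argument (this is where the non-degeneracy hypothesis enters, with the Fourier variable $2\pi z$ playing the role of the parameter), and finally integrates the remaining factor in $z$ using $\int e^{-\delta\pi^{2}|z|^{2}}dz\le(\delta\pi)^{-1/2}$. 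Because the single Gaussian $e^{-|w|^{2}/\delta}$ is transformed before squaring, the decay rate in the exponent is $\delta/4$, which after rescaling produces exactly $e^{-\tau/4}$ and the prefactor $\tfrac{\sqrt{\delta\pi}}{4}$.

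By contrast, you square first, so the Gaussian becomes $e^{-2|w|^{2}/\delta}$ and the $w$-integration yields the off-diagonal decay $e^{-\delta|b(\xi)-b(\eta)|^{2}/8}$. After AM--GM and layer-cake you obtain a bound of the shape $\tfrac{\sqrt{\pi\delta/2}}{8}\int_{0}^{\infty}e^{-\tau/8}\iota(\tau/\delta)\,d\tau\,\|f\|_{2}^{2}$, which is of the same type but with $e^{-\tau/8}$ and a different prefactor; since $e^{-\tau/8}\ge e^{-\tau/4}$ this is strictly weaker than the stated inequality. This is harmless for every application in the paper, where only the scaling $\delta^{(1-\theta)/2}$ matters and the implicit constants are absorbed into $\lesssim$. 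Your route is arguably more elementary (no Plancherel) and makes transparent that the parameter $z$ in the hypothesis is instantiated at $z=b(\xi)$; the paper's Fourier route keeps the constants tighter and is closer in spirit to the Bouchut--Desvillettes averaging arguments it is modeled on. Your side remarks are fine: the $\sup_{\xi}$ step needs no measurability of $I(\xi)$ since you in fact produce a bound on $I(\xi)$ that is independent of $\xi$.
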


\subsection*{The estimate of \texorpdfstring{$u^{0}$}{\^{u}0\}}}

Taking Fourier transforms in \eqref{takis6} yields, for each $n\in\Z^{N}$, 
\[
\hat{u}^{0}(n,t)=\int e^{-if(\xi)\b(t)\cdot n-(nA(\xi)n+\gamma(|n|^{2\alpha}+1))t}\hat{\chi}_{0}(n,\xi)d\xi.
\]
As in \cite{GS14-2} we have $\hat{u}^{0}(0,t)=0.$ For $n\in\Z^{N}\setminus\{0\}$, integrating in time, taking expectations and using the scaling properties of the Brownian paths, we find 
\begin{align*}
\E\int_{0}^{T}|\hat{u}^{0}(n,t)|^{2}dt & =\E\int_{0}^{T}\Big|\int e^{-if(\xi)\b({t})\cdot n-(nA(\xi)n+\gamma(|n|^{2\alpha}+1))t}\hat{\chi}_{0}(n,\xi)d\xi\Big|^{2}dt\\
 & =\int_{0}^{T}e^{-2\g(|n|^{2\alpha}+1)t}\E\Big|\int e^{-if(\xi)\b({t|n|^{2}})\cdot\frac{n}{|n|}-nA(\xi)nt}\hat{\chi}_{0}(n,\xi)d\xi\Big|^{2}dt\\
 & =\int_{0}^{T}\frac{e^{-2\g(|n|^{2\alpha}+1)t}}{\sqrt{2\pi|n|^{2}t}}\int\Big|e^{-\frac{|w|^{2}}{4|n|^{2}t}}\int e^{-if(\xi)\frac{n}{|n|}\cdot w-\frac{n}{|n|}A(\xi)\frac{n}{|n|}|n|^{2}t}\hat{\chi}_{0}(n,\xi)d\xi\Big|^{2}dwdt.
\end{align*}
\smallskip{}
Lemma \ref{lem:integral_estimate-1} with $\d=4|n|^{2}t$, $b(\xi)=f(\xi)\cdot\frac{n}{|n|}$, $a(\xi)=\frac{1}{4}\frac{n}{|n|}A(\xi)\frac{n}{|n|}$ and $i(\ve)\sim\ve^{\frac{\theta}{2}}$ gives
\begin{align*}
\E\int_{0}^{T}|\hat{u}^{0}(n,t)|^{2}dt & \le\frac{\sqrt{\d\pi}}{4}\int_{0}^{T}\frac{e^{-2\g(|n|^{2\alpha}+1)t}}{\sqrt{2\pi|n|^{2}t}}\int_{0}^{\infty}e^{-\frac{\tau}{4}}\iota(\frac{\tau}{\d})d\tau dt\|\hat{\chi}_{0}(n,\cdot)\|_{2}^{2}\\
 & \lesssim\d^{\frac{1-\theta}{2}}\int_{0}^{T}\frac{e^{-2\g(|n|^{2\alpha}+1)t}}{\sqrt{2\pi|n|^{2}t}}\int_{0}^{\infty}\tau^{\frac{\theta}{2}}e^{-\tau}d\tau dt\|\hat{\chi}_{0}(n,\cdot)\|_{2}^{2}\\
 & \lesssim\int_{0}^{T}e^{-2\g(|n|^{2\alpha}+1)t}(|n|^{2}t)^{-\frac{\theta}{2}}dt\|\hat{\chi}_{0}(n,\cdot)\|_{2}^{2}\\
 & \lesssim|n|^{-\theta}\int_{0}^{T}e^{-2\g(|n|^{2\alpha}+1)t}t{}^{-\frac{\theta}{2}}dt\|\hat{\chi}_{0}(n,\cdot)\|_{2}^{2}\\
 & \lesssim|n|^{-\theta}\g^{-\frac{2-\theta}{2}}(|n|^{2\alpha}+1)^{-\frac{2-\theta}{2}}\int_{0}^{\infty}e^{-t}t^{-\frac{\theta}{2}}dt\|\chi_{0}(\cdot,\cdot)\|_{2}^{2},
\end{align*}
and, hence, 
\begin{equation}
\E\int_{0}^{T}|\hat{u}^{0}(n,t)|^{2}dt\le C|n|^{-\theta}\g^{-\frac{2-\theta}{2}}(|n|^{2\alpha}+1)^{-\frac{2-\theta}{2}}\|\hat{\chi}_{0}(n,\cdot)\|_{2}^{2}\lesssim\gamma^{-\frac{2-\theta}{2}}\|\hat{\chi}_{0}(n,\cdot)\|_{2}^{2}.\label{eq:u_0-est}
\end{equation}
Combining the previous estimates, after summing over $n$, we obtain 
\begin{equation}
\E\int_{0}^{T}\|u^{0}(t)\|_{2}^{2}dt=\E\int_{0}^{T}\|\hat{u}^{0}(t)\|^{2}dt\lesssim\gamma^{-\frac{2-\theta}{2}}\|\chi_{0}(\cdot,\cdot)\|_{2}^{2}\lesssim\gamma^{-\frac{2-\theta}{2}}\|\chi_{0}(\cdot,\cdot)\|_{1}=\gamma^{-\frac{2-\theta}{2}}\|u_{0}\|_{1}.\label{takis1001}
\end{equation}

\subsection*{The estimate of \texorpdfstring{$u^{1}$}{\^{u}1\}} }

Let $\bar{\omega}_{n}:=\g(|n|^{2\a}+1)$. For each $n\in\Z^{N}$, the Fourier transform $\hat{u}^{1}(n,t)$ of $u^{1}(t)$ in $x$ is given by 
\begin{align*}
\hat{u}^{1}(n,t) & ={\bar{\omega}_{n}}\int_{0}^{t}\int e^{-if(\xi)(\b(t)-\b(s))\cdot n-(nA(\xi)n+\g(|n|^{2\alpha}+1))(t-s)}\hat{\chi}(n,\xi,s)d\xi ds\\
 & ={\bar{\omega}_{n}}\int_{0}^{t}e^{-\bar{\omega}_{n}(t-s)}\int e^{-if(\xi)(\b({t})-\b({s}))\cdot n-nA(\xi)n(t-s)}\hat{\chi}(n,\xi,s)d\xi ds.
\end{align*}
Integrating in $t$, taking expectation and using that $\int_{0}^{t}{\bar{\omega}_{n}}e^{-{\bar{\omega}_{n}}r}dr\le1$, we find 
\begin{align*}
 & \E\int_{0}^{T}|\hat{u}^{1}|^{2}(n,t)dt\\
= & \E\int_{0}^{T}\big|\int_{0}^{t}{\bar{\omega}_{n}}e^{-{\bar{\omega}_{n}}(t-s)}\int e^{-if(\xi)(\b(t)-\b(s))\cdot n-nA(\xi)n(t-s)}\hat{\chi}(n,\xi,s)d\xi ds\big|^{2}dt\\
= & \E\int_{0}^{T}\big|\int_{0}^{t}{\bar{\omega}_{n}}e^{-{\bar{\omega}_{n}}r}\int e^{-if(\xi)(\b({t})-\b({t-r}))\cdot n-nA(\xi)nr}\hat{\chi}(n,\xi,t-r)d\xi dr\big|^{2}dt\\
\le & \E\int_{0}^{T}\int_{0}^{t}\bar{\omega}_{n}e^{-{\bar{\omega}_{n}}r}\big|\int e^{-if(\xi)(\b({t})-\b({t-r}))\cdot n-nA(\xi)nr}\hat{\chi}(n,\xi,t-r)d\xi\big|^{2}drdt.
\end{align*}
Using that $\hat{\chi}$ is $\mcF_{t}$-adapted, that the increments $\b(t)-\b({t-r})$ are independent of $\mcF_{t-r}$ and the scaling properties of the Brownian motion, as in \cite{GS14-2}, we find 
\begin{align*}
 & \E\big|\int e^{-if(\xi)(\b({t})-\b({t-r}))\cdot n-nA(\xi)nr}\hat{\chi}(n,\xi,t-r)d\xi\big|^{2}\\
 & =\E\td\E\big|\int e^{-if(\xi)\b({|n|^{2}r})(\td\o)\cdot\frac{n}{|n|}-\left(\frac{n}{|n|}A(\xi)\frac{n}{|n|}\right)|n|^{2}r}\hat{\chi}(n,\xi,t-r)(\o)d\xi\big|^{2}\\
 & =\frac{1}{\sqrt{2\pi|n|^{2}r}}\E\int\big|e^{-\frac{|w|^{2}}{4|n|^{2}r}}\int e^{-if(\xi)w\cdot\frac{n}{|n|}-\left(\frac{n}{|n|}A(\xi)\frac{n}{|n|}\right)|n|^{2}r}\hat{\chi}(n,\xi,t-r)d\xi\big|^{2}dw,
\end{align*}
where $\td E$ is the expectation with respect to $\tilde\o$.
Using again Lemma \ref{lem:integral_estimate-1} with $\d=4|n|^{2}r$, $b(\xi)=f(\xi)\cdot\frac{n}{|n|}$ and $a(\xi)=\frac{1}{4}\left(\frac{n}{|n|}A(\xi)\frac{n}{|n|}\right)$ we get 
\begin{align*}
 & \int\big|e^{-\frac{|w|^{2}}{4|n|^{2}r}}\int e^{-if(\xi)w\cdot\frac{n}{|n|}-\left(\frac{n}{|n|}A(\xi)\frac{n}{|n|}\right)|n|^{2}r}\hat{\chi}(n,\xi,t-r)d\xi\big|^{2}dw\lesssim\sqrt{\d}\int_{0}^{\infty}e^{-\frac{\tau}{4}}\iota(\frac{\tau}{\d})d\tau\|\hat{\chi}(n,\cdot,t-r)\|_{2}^{2}\\[1mm]
 & \lesssim\d^{\frac{1-\theta}{2}}\int_{0}^{\infty}\tau^{\frac{\theta}{2}}e^{-\tau}d\tau\|\hat{\chi}(n,\cdot,t-r)\|_{2}^{2}\lesssim(|n|^{2}r)^{\frac{1-\theta}{2}}\|\hat{\chi}(n,\cdot,t-r)\|_{2}^{2},
\end{align*}
and, hence, 
\[
\E\big|\int e^{-if(\xi)(\b({t})-\b({t-r}))\cdot n-nA(\xi)nr}\hat{\chi}(n,t-r)d\xi\big|^{2}\lesssim(|n|^{2}r)^{-\frac{\theta}{2}}\E\|\hat{\chi}(n,\cdot,t-r)\|_{2}^{2}.
\]
Combining all the above estimates we find
\[
\E\int_{0}^{T}|\hat{u}^{1}|^{2}(n,t)dt\lesssim\int_{0}^{T}\int_{0}^{t}\bar{\omega}_{n}e^{-{\bar{\omega}_{n}}r}(|n|^{2}r)^{-\frac{\theta}{2}}\E\|\hat{\chi}(n,\cdot,t-r)\|_{2}^{2}drdt.
\]
Young's inequality then yields 
\[
\E\int_{0}^{T}|\hat{u}^{1}|^{2}(n,t)dt\lesssim\int_{0}^{T}\bar{\omega}_{n}e^{-{\bar{\omega}_{n}}r}(|n|^{2}r)^{-\frac{\theta}{2}}dr\int_{0}^{T}\E\|\hat{\chi}(n,\cdot,r)\|_{2}^{2}dr,
\]
and, in view of the fact that, for $\theta\in[0,1]$, 
\[
\int_{0}^{T}\bar{\o}_{n}e^{-\bar{\o}_{n}r}r{}^{-\frac{\theta}{2}}dr\le\bar{\o}_{n}^{1+\frac{\theta}{2}}\int_{\R}e^{-\bar{\o}_{n}r}(\bar{\o}_{n}r){}^{-\frac{\theta}{2}}dr=\bar{\o}_{n}^{\frac{\theta}{2}}\int_{\R}e^{-r}r{}^{-\frac{\theta}{2}}dr<\infty,
\]
we conclude that, for $n\in\Z^{N}\setminus\{0\}$, 
\begin{equation}
\E\int_{0}^{T}|\hat{u}^{1}|^{2}(n,t)dt\lesssim\g^{\frac{\theta}{2}}(|n|^{2\a-2}+|n|^{-2})^{\frac{\theta}{2}}\E\|\hat{\chi}(n,\cdot,\cdot)\|_{L^{2}(\R\times[0,T])}^{2}.\label{eq:u^1-est}
\end{equation}
Hence,
\[
\E\int_{0}^{T}\|u^{1}(t)\|_{2}^{2}dt\lesssim\g^{\frac{\theta}{2}}\E\int_{0}^{T}\|u(t)\|_{1}dt.
\]

\subsection*{The estimate of \texorpdfstring{$Q$}{Q}}

For $\l\ge0$ and $\vp\in L^{\infty}(\Omega\times[0,T];C^{\infty}(\TT^{N}))$ (in what follows, unless necessary, we do not display the dependence of $\vp$ in $\omega$ and $t$), let 
\begin{align*}
&<(-\D)^{\frac{\l}{2}}Q(t),\vp(t)> \\
&:=\int_{0}^{t}\int\partial_{\xi}(S_{A_{\gamma}(\xi)}(s,t)((-\D)^{\frac{\l}{2}}\vp(t)))(x)dq(x,\xi,s)\\
 & =\int_{0}^{t}\int_{\TT^{N}\times (\R\setminus\{0\})}\partial_{\xi}(S_{A_{\gamma}(\xi)}(s,t)((-\D)^{\frac{\l}{2}}\vp(t)))(x)dq(x,\xi,s)\\
 & =\int_{0}^{t}\int_{\TT^{N}\times (\R\setminus\{0\})}\left(f'(\xi)(\b_{t}-\b_{s})\cdot D+A'(\xi):D^{2}(t-s)\right)S_{A_{\gamma}(\xi)}^{*}(s,t)(-\D)^{\frac{\l}{2}}\vp(t)(x)dq(x,\xi,s),
\end{align*}
where the first equality follows from \eqref{eq:measure_bound} and the second equality is immediate from the definition of $S_{A_{\gamma}(\xi)}$ and $S_{A_{\gamma}(\xi)}^{*}$. 

In view of Lemma \ref{lem:reg_fractional_heat}, for any $\psi\in C^{\infty}(\TT^{N})$, we have 
\begin{align*}
 & \|D(S_{A_{\gamma}(\xi)}^{*}(s,t)(-\D)^{\frac{\l}{2}}\psi)\|_{\infty}\\
 & =\|D\left(e^{-\g(t-s)(\gamma B+A(\xi):D^{2})}(-\D)^{\frac{\l}{2}}\psi(\cdot)\right)\left(x-f(\xi)(\b({t})-\b({s}))\right)\|_{\infty}\\
 & =e^{-\g(t-s)}\|\left(e^{-\g(t-s)A(\xi):D^{2}}De^{-\g(t-s)(-\D)^{\a}}(-\D)^{\frac{\l}{2}}\psi(\cdot)\right)\left(x-f(\xi)(\b({t})-\b({s}))\right)\|_{\infty}\\
 & \le e^{-\g(t-s)}\|De^{-\g(t-s)(-\D)^{\a}}(-\D)^{\frac{\l}{2}}\psi\|_{\infty}
 \lesssim e^{-\g(t-s)}(\gamma(t-s))^{-\frac{\l+1}{2\a}}\|\psi\|_{\infty}.
\end{align*}
Again using Lemma \ref{lem:reg_fractional_heat} we have
\begin{align*}
 & \|\partial_{x_{i}x_{j}}^{2}(S_{A_{\gamma}(\xi)}^{*}(s,t)(-\D)^{\frac{\l}{2}}\psi)\|_{\infty}\\
 & =\|\partial_{x_{i}x_{j}}^{2}\left(e^{-(t-s)(\g B+A(\xi):D^{2})}(-\D)^{\frac{\l}{2}}\psi(\cdot)\right)\left(x-f(\xi)(\b({t})-\b({s}))\right)\|_{\infty}\\
 & =e^{-\g(t-s)}\|\left(e^{-(t-s)A(\xi):D^{2}}\partial_{x_{i}x_{j}}^{2}e^{-(t-s)\g(-\D)^{\a}}(-\D)^{\frac{\l}{2}}\psi(\cdot)\right)\left(x-f(\xi)(\b({t})-\b({s}))\right)\|_{\infty}\\
 & \le e^{-\g(t-s)}\|\partial_{x_{i}x_{j}}^{2}e^{-(t-s)\g(-\D)^{\a}}(-\D)^{\frac{\l}{2}}\psi(\cdot)\|_{\infty}
 \lesssim e^{-\g(t-s)}(\gamma(t-s))^{-\frac{\l+2}{2\a}}\|\psi\|_{\infty}.
\end{align*}
In view of \eqref{takis data3}, it follows that, for all $\vp\in L^{\infty}(\Omega\times[0,T];C^{\infty}(\TT^{N})),$ 
\begin{align*}
 & \E\int_{0}^{T}\<(-\D)^{\frac{\l}{2}}Q(t),\vp(t)\>dt\\
 & \le C\|\vp\|_{\infty}\E\int_{0}^{T}\int_{0}^{t}e^{-\g(t-s)}\left(\gamma^{-\frac{\l+1}{2\a}}|\b({t})-\b({s})|(t-s)^{-\frac{\l+1}{2\a}}+\gamma^{-\frac{\l+2}{2\a}}(t-s){}^{1-\frac{\l+2}{2\a}}\right)\\
 & \int_{\TT^{N}\times (\R\setminus\{0\})}(1+|\xi|^{p_{1}}+|\xi|^{p_{2}})dq(x,\xi,s)dt\\
 & \le C\|\vp\|_{\infty}\E\int_{0}^{T}\int_{0}^{t}e^{-\g(t-s)}\left(\gamma^{-\frac{\l+1}{2\a}}|\b({t})-\b({s})|(t-s)^{-\frac{\l+1}{2\a}}+\gamma^{-\frac{\l+2}{2\a}}(t-s){}^{1-\frac{\l+2}{2\a}}\right)\\
 & \int_{{\TT^{N}}\times\R}(1+|\xi|^{p_{1}}+|\xi|^{p_{2}})dq(x,\xi,s)dt
\end{align*}
At this point we need to argue as in the previous step by taking conditional expectation and using that $\b({t})-\b({t-r})$ is independent of $\mathcal{F}_{t-r}$. Since $q$ is only a measure, to make the following argument rigorous it is necessary to perform another approximation. This is done in \cite{GS14-2} and we omit the details here.

Using the independence of $\b({t})-\b({s})$ from $\mcF_{s}$ and $\mcF_{s}$-measurability of $q$ we find 
\[
\E|\b({t})-\b({s})|q(s)=\E[\E|\b({t})-\b({s})|q(s)|\mcF_{s}]=\E|\b({t})-\b({s})|\E q(s)=\sqrt{t-s}\E q(s),
\]
and employing once more  Young's inequality we obtain 
\begin{align*}
 & \E\int_{0}^{T}\int_{0}^{t}e^{-\g(t-s)}\left(|\b({t})-\b({s})|\gamma^{-\frac{\l+1}{2\a}}(t-s)^{-\frac{\l+1}{2\a}}+\gamma^{-\frac{\l+2}{2\a}}(t-s){}^{1-\frac{\l+2}{2\a}}\right)\int_{{\TT^{N}}\times\R}(1+|\xi|^{p_{1}}+|\xi|^{p_{2}})dq(x,\xi,s)dt\\
 & =\int_{0}^{T}\int_{0}^{t}e^{-\g(t-s)}\left(\gamma^{-\frac{\l+1}{2\a}}(t-s)^{\frac{1}{2}-\frac{\l+1}{2\a}}+\gamma^{-\frac{\l+2}{2\a}}(t-s){}^{1-\frac{\l+2}{2\a}}\right)\E\int_{{\TT^{N}}\times\R}(1+|\xi|^{p_{1}}+|\xi|^{p_{2}})dq(x,\xi,s)dt\\
 & \le\int_{0}^{T}e^{-\g t}\left(\gamma^{-\frac{\l+1}{2\a}}t{}^{\frac{1}{2}-\frac{\l+1}{2\a}}+\gamma^{-\frac{\l+2}{2\a}}t{}^{1-\frac{\l+2}{2\a}}\right)dt\int_{0}^{T}\E\int_{{\TT^{N}}\times\R}(1+|\xi|^{p_{1}}+|\xi|^{p_{2}})dq(x,\xi,s)dt.
\end{align*}
In conclusion, 
\begin{align*}
\E\int_{0}^{T}\<(-\D)^{\frac{\l}{2}}Q(t),\vp(t)\>dt\lesssim & \|\vp\|_{\infty}\int_{0}^{T}e^{-\g t}\left(\gamma^{-\frac{\l+1}{2\a}}t{}^{\frac{1}{2}-\frac{\l+1}{2\a}}+\gamma^{-\frac{\l+2}{2\a}}t{}^{1-\frac{\l+2}{2\a}}\right)dt\\
 & \E\int_{0}^{T}\int_{{\TT^{N}}\times\R}(1+|\xi|^{p_{1}}+|\xi|^{p_{2}})dq(x,\xi,s).
\end{align*}
Moreover note that, if $\d>-1$, then 
\[
\int_{0}^{T}t{}^{\d}e^{-\g t}dt=\g^{-\d}\int_{0}^{T}(\g t){}^{\d}e^{-\g t}dt=\g^{-\d-1}\int_{\R_{+}}t{}^{\d}e^{-t}dt\le C\g^{-\d-1},
\]
and for $\d=\frac{1}{2}-\mu_{\a,\l}^{(1)}=\frac{1}{2}-\frac{\l+1}{2\a}$, $\d=1-\mu_{\a,\l}^{(2)}=1-\frac{\l+2}{2\a}$ and assuming $\mu_{\a,\l}^{(2)}<2$ (note that this also implies $\mu_{\a,\l}^{(1)}<\frac{3}{2}$,)  we get
\[
\int_{0}^{T}e^{-\g t}\left(\gamma^{-\mu_{\a,\l}^{(1)}}t{}^{\frac{1}{2}-\mu_{\a,\l}^{(1)}}+\gamma^{-\mu_{\a,\l}^{(2)}}t{}^{1-\mu_{\a,\l}^{(2)}}\right)dt\le C(\g^{-\frac{3}{2}}+\g^{-2}).
\]
We use next \eqref{eq:measure_bound} and get, in view of all the above, 
\begin{align}
 & \E\int_{0}^{T}\<(-\D)^{\frac{\l}{2}}Q(t),\vp(t)\>dt\lesssim\|\vp\|_{\infty}(\g^{-\frac{3}{2}}+\g^{-2})(\|u_{0}\|_{2}^{2}+\|u_{0}\|_{p_{1}+2}^{p_{1}+2}+\|u_{0}\|_{p_{2}+2}^{p_{2}+2}).\label{eq:Q_est}
\end{align}
For now it is enough to take $\l=0$. Then $\mu_{\a,0}^{(1)}<\frac{3}{2}$, $\mu_{\a,0}^{(2)}<2$ is satisfied if $\a>\frac{1}{2}$, and, for $\g\le1$, we obtain 
\begin{align*}
 & \E\int_{0}^{T}\<Q(t),\vp(t)\>dt\lesssim\g^{-2}\|\vp\|_{\infty}(\|u_{0}\|_{2}^{2}+\|u_{0}\|_{p_{1}+2}^{p_{1}+2}+\|u_{0}\|_{p_{2}+2}^{p_{2}+2}).
\end{align*}

\subsection*{The proof of Theorem \ref{thm:ltb}}~We conclude the proof as in \cite{GS14-2}. Note that as compared to \cite{GS14-2} the constants change due to the changed constants in the estimate of $Q.$ We obtain 
\begin{align*}
\E\|u(T)\|_{1} & \lesssim T^{-\frac{1}{2}+a(\frac{2-\theta}{4})}\|u_{0}\|_{1}^{\frac{1}{2}}+2T^{-\frac{a\theta}{2}}+T^{2a-1}(\|u_{0}\|_{2}^{2}+\|u_{0}\|_{p_{1}+2}^{p_{1}+2}+\|u_{0}\|_{p_{2}+2}^{p_{2}+2}).
\end{align*}
and letting $a=\frac{2}{4+\t}$, for $T\ge1$, we get
\begin{align*}
\E\|u(T)\|_{1} & \lesssim T^{-\frac{\theta}{4+\theta}}\left(\|u_{0}\|_{1}^{\frac{1}{2}}+1+\|u_{0}\|_{2}^{2}+\|u_{0}\|_{p_{1}\vee p_{2}+2}^{p_{1}\vee p_{2}+2}\right);
\end{align*}
note that the rate is independent of the choice of $\a$. The proof is concluded as in \cite{GS14-2}.

\section{Regularity (proof of Theorem \ref{thm:reg})\label{sec:reg}}

The remaining argument is precisely the same as in \cite{GS14-2}. First, assuming $\chi = \chi(u) \in L^2(\R\times[0,T]\times\Omega;H^\tau(\R^N))$ for some $\tau\in[0,1]$ this implies
\begin{equation}
\E\int_{0}^{T}\|u(t)\|_{W^{\l,1}}dt\le C(1+\|u_{0}\|_{p_{1}+2}^{p_{1}+2}+\|u_{0}\|_{p_{2}+2}^{p_{2}+2}),\label{eq:reg_prop-1}
\end{equation}
for $\l$ satisfying the constraint 
\[
\l<(4\a-2)\wedge(\t(1-\a)+\tau).
\]
Note that the constants change as compared to \cite{GS14-2} due to the changed constants in the estimation of $Q$. Maximizing the right hand side yields $\a=\frac{\t+2+\tau}{\t+4}\in(0,1)$ and we obtain 
\[
\l<4\a-2=\frac{2\t}{\t+4}+\frac{4\tau}{\t+4}.
\]
As in \cite{GS14-2} this bound is now bootstrapped, which yields that \eqref{eq:reg_prop-1} holds for all $\l\in\left(0,\frac{2\t}{\t+2}\right)$. The proof is then concluded as in \cite{GS14-2}. 

\appendix

\section{Convolution error estimates}\label{sec:err-est}

We study here the behavior of the several error terms introduced in the proof of the comparison principle for pathwise entropy solutions. 

The first type of errors, which are studied in the first subsection below, are  due to the convolution along characteristics,  that is, when we replace $\chi(x,\xi,t)$ by $\td\chi(x,\xi,t):=\chi(x+f(\xi)z_{t},\xi,t)$ and $\td\chi$ by its approximation $\chi\ast\vr_{\ve}^{s}$. We then consider the parabolic error terms arising from doubling of variables/mollification of the velocity variable $\xi$ in the second subsection. In the last subsection we estimate the hyperbolic error which a is consequence of the failure of the right hand side in \eqref{eq:informal_transform} to be the $\xi$-derivative of a nonnegative measure.
\subsection*{Convolution along characteristics\label{sub:Convolution-along-characteristic}}
Let 
\[
\vr_{\ve}^{s}(x,y,\xi,t):=\vr_{\ve}^{s}(x-y+f(\xi)z(t)),
\]
where $\vr_{\ve}^{s}$ is a standard Dirac family. 
\begin{lem}
\label{lem:convolution_est}Let $u\in BV(\R^{N})$. Then, for all $t\in[0,T]$, $p\in[1,\infty)$,
\[
\|\left(\chi\ast\vr_{\ve}^{s}\right)(y,\xi,t)-\chi(y-f(\xi)z(t),\xi)\|_{L^{p}(\R^{N}\times\R)}\le\ve BV(u).
\]
\end{lem}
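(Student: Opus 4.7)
The plan is to exploit translation-invariance to absorb the $\xi$-dependent shift by $f(\xi)z(t)$, and then reduce to a classical mollification error estimate for BV functions combined with the co-area formula.

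First, I would perform the substitution $y \mapsto y + f(\xi)z(t)$ at each fixed $(\xi, t)$, which converts $(\chi \ast \vr_\ve^s)(y,\xi,t)$ into the standard spatial convolution $(\vr_\ve^s \ast_x \chi(\cdot,\xi,t))(y')$ evaluated at $y' = y - f(\xi)z(t)$, and simultaneously sends $\chi(y - f(\xi)z(t), \xi)$ to $\chi(y', \xi, t)$. Since the $L^p(\R^N)$-norm is translation invariant, the statement reduces to the path-free estimate
\[
\left\|\vr_\ve^s \ast_x \chi(\cdot, \xi, t) - \chi(\cdot, \xi, t)\right\|_{L^p(\R^N \times \R)} \le \ve\, BV(u).
\]

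The next step would be to observe that for each fixed $\xi$ the slice $\chi(\cdot, \xi, t)$ equals $\mathbf{1}_{\{u(\cdot, t) > \xi\}}$ for $\xi > 0$ and $-\mathbf{1}_{\{u(\cdot,t) < \xi\}}$ for $\xi < 0$, hence belongs to $BV(\R^N)$ with total-variation mass equal to $\mathrm{Per}(\{u(\cdot,t) > \xi\})$. Applying the classical estimate
\[
\|\vr_\ve \ast g - g\|_{L^1(\R^N)} \le \ve\, |Dg|(\R^N), \qquad g \in BV(\R^N),
\]
(itself proved by writing $g(x+z)-g(x)=\int_0^1 Dg(x+sz)\cdot z\,ds$ distributionally and integrating) to $g = \chi(\cdot, \xi, t)$, then integrating over $\xi$, and finally invoking the co-area identity $\int_\R \mathrm{Per}(\{u > \xi\})\,d\xi = BV(u)$ would give the $p = 1$ case.

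For $p > 1$, I would use the pointwise bound $|\vr_\ve^s \ast_x \chi - \chi| \le 2$ (both $\chi$ and its convolution lie in $[-1, 1]$) together with $|h|^p \le 2^{p-1}|h|$ to reduce the $L^p$ estimate to the $L^1$ estimate up to an absolute constant. The only point that will require care is the $\xi$-sign dichotomy in the definition of $\chi(u, \xi)$, which should be handled uniformly by the identity $\mathrm{Per}(E) = \mathrm{Per}(E^c)$, so that both regimes contribute $\mathrm{Per}(\{u > \xi\})$ to the integrand; all other ingredients are textbook BV theory and there is no genuine analytic obstacle.
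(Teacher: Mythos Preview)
Your $p=1$ argument is correct and amounts to the same computation as the paper's, just packaged differently: you apply the BV mollification estimate $\|\vr_\ve\ast g-g\|_{L^1}\le\ve\,|Dg|$ slice-by-slice in $\xi$ and then sum via the co-area formula, whereas the paper first integrates out $\xi$ using the identity $\int_\R|\chi(x,\xi)-\chi(y,\xi)|\,d\xi=|u(x)-u(y)|$ and then applies the translation estimate directly to $u$. The change of variables you perform at the outset also appears in the paper, only one line later.

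For $p>1$ you lose a factor $2^{p-1}$ because you only use $|h|\le 2$. The paper avoids any constant by a sharper observation: after H\"older's inequality against the probability measure $\vr_\ve^s\,dx$, the integrand becomes $|\chi(x,\xi)-\chi(y-f(\xi)z(t),\xi)|^p$, and since at each fixed $\xi$ the function $\chi(\cdot,\xi)$ is $\{0,1\}$-valued (or $\{-1,0\}$-valued), the difference lies in $\{-1,0,1\}$ and hence $|h|^p=|h|$ exactly. In fact your own framework already yields the same sharpening --- on each $\xi$-slice both $\chi(\cdot,\xi)$ and $\vr_\ve^s\ast\chi(\cdot,\xi)$ lie in $[0,1]$ (resp.\ $[-1,0]$), so $|h|\le 1$ and $|h|^p\le|h|$ without any constant. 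Either way, what is actually proved is $\|\cdot\|_{L^p}^p\le\ve\,BV(u)$; only the case $p=1$ is used downstream.
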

\begin{proof}
Using Hölder's inequality and $u\in BV$ we find
\begin{align*}
 & \|\left(\chi\ast\vr_{\ve}^{s}\right)(y,\xi,t)-\chi(y-f(\xi)z(t),\xi)\|_{L^{p}(\R^{N}\times\R)}^{p}\\
 & =\int\Big|\int\chi(x,\xi)\vr_{\ve}^{s}(x,y,\xi,t)dx-\chi(y-f(\xi)z(t),\xi)\Big|^{p}dyd\xi\\
 & =\int\Big|\int[\chi(x,\xi)-\chi(y-f(\xi)z(t),\xi)]\vr_{\ve}^{s}(x,y,\xi,t)dx\Big|^{p}dyd\xi\\
 & \le\int\Big|\left(\int\vr_{\ve}^s(x,y,\xi,t)dx\right)^{\frac{1}{q}}\left(\int|\chi(x,\xi)-\chi(y-f(\xi)z(t),\xi)|^{p}\vr_{\ve}^{s}(x,y,\xi,t)dx\right)^{\frac{1}{p}}\Big|^{p}dyd\xi\\
 & \le\int\int|\chi(x,\xi)-\chi(y-f(\xi)z(t),\xi)|^{p}\vr_{\ve}^{s,0}(x-y+f(\xi)z(t))dxdyd\xi\\
 & =\int\int|\chi(x,\xi)-\chi(y,\xi)|\vr_{\ve}^{s}(x-y)dxdyd\xi\\
 & =\int|u(x)-u(y)|\vr_{\ve}^{s}(x-y)dxdy\\
 & \le\ve BV(u).
\end{align*}
\end{proof}
\begin{lem}
\label{lem:convolution_est-1} Let $u\in(BV\cap L^{\infty})(\R^{N})$. Then, for all $t\in[0,T]$,
\[
\|\chi(y-f(\xi)z(t),\xi)-\chi(y,\xi)\|_{L^{1}(\R^{N}\times\R)}\le\|f(u)\|_{\infty}|z(t)|BV(u).
\]
\end{lem}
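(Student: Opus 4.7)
My plan is to exploit the fact that, for each fixed $\xi$, the map $y \mapsto \chi(u(y - f(\xi) z(t)), \xi)$ is precisely the $y$-translate by the vector $f(\xi) z(t)$ of $y \mapsto \chi(u(y), \xi)$, and to then apply the standard $L^{1}$-translation estimate for BV functions in $y$ before integrating in $\xi$.

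I would treat smooth $u$ first; the general $(BV \cap L^{\infty})$-case follows by mollifying $u^{\varepsilon} := u \ast \rho_{\varepsilon}$, which preserves the $L^{\infty}$ bound and does not increase $BV(u)$, and then passing to the limit via dominated convergence. For smooth $u$, with $h(\xi) := f(\xi) z(t)$, the fundamental theorem of calculus combined with the distributional identity $\nabla_{y} \chi(u(y), \xi) = \delta(\xi - u(y)) \nabla u(y)$ gives
\[
|\chi(u(y - h(\xi)), \xi) - \chi(u(y), \xi)| \le |h(\xi)| \int_{0}^{1} \delta(\xi - u(y - s h(\xi)))\, |\nabla u(y - s h(\xi))|\, ds.
\]

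Integrating first in $y$ (substituting $y' = y - s h(\xi)$ for fixed $\xi, s$) and then in $\xi$ (using $\int \delta(\xi - u(y'))\, d\xi = 1$), Fubini yields
\[
\int_{\R^{N} \times \R} |\chi(u(y - h(\xi)), \xi) - \chi(u(y), \xi)|\, dy\, d\xi \le |z(t)| \int_{\R^{N}} |f(u(y'))|\, |\nabla u(y')|\, dy' \le \|f(u)\|_{\infty}\, |z(t)|\, BV(u),
\]
which is the claimed bound, the last step using $|f(u(y'))| \le \|f(u)\|_{\infty}$ pointwise. The only mild technicality is extending from smooth $u$ to $BV$: the distributional $\delta$-identity is standard for $C^{1}$ functions, and continuity of $f$ on $[-\|u\|_{\infty}, \|u\|_{\infty}]$ controls $\|f(u^{\varepsilon})\|_{\infty}$ uniformly during mollification. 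Consequently, there is no conceptual obstacle here beyond the translation estimate itself.
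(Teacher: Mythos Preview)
Your argument is correct and rests on the same idea as the paper's proof: the $L^{1}$-translation estimate for BV functions applied to $y\mapsto\chi(u(y),\xi)$ at fixed $\xi$, followed by integration in $\xi$. The paper's execution is slightly more direct: it invokes the coarea-type identity $BV(u)=\int_{\R}BV(\chi(\cdot,\xi))\,d\xi$ and bounds $\int|\chi(y-f(\xi)z(t),\xi)-\chi(y,\xi)|\,dy\le |f(\xi)z(t)|\,BV(\chi(\cdot,\xi))$ immediately, so no mollification or $\delta$-calculus is needed. Your route effectively rederives that identity by hand via the chain rule $\nabla_{y}\chi(u(y),\xi)=\delta(\xi-u(y))\nabla u(y)$ and a smoothing step; this is perfectly fine, just a little longer.
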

\begin{proof}
Due to \cite[Lemma C.1]{GPS15} we have that
\[
\|u\|_{BV}=\int_{\R}BV(\chi(\cdot,\xi))d\xi,
\]
with $\chi(x,\xi):=\chi(u(x),\xi)$. It follows that
\begin{align*}
\int|\chi(y-f(\xi)z(t),\xi)-\chi(y,\xi)|dyd\xi  \le\int BV(\chi(\cdot,\xi))|f(\xi)z(t)|d\xi \le|z(t)|\|f(u)\|_{\infty}\int BV(\chi(\cdot,\xi))d\xi. 
\end{align*}
\end{proof}
Let 
\begin{align*}
\tilde{G}(t):= & -2\int\chi^{(1)}(y-f(\xi)z^{(1)}(t),\xi,t)\chi^{(2)}(y-f(\xi)z^{(1)}(t),\xi,t)dyd\xi\\
 & +\int\sgn(\eta)\chi^{(1)}(y-f(\xi)z^{(1)}(t),\xi,t)dyd\xi+\int\sgn(\eta)\chi^{(2)}(y-f(\xi)z^{(2)}(t),\xi,t)dyd\xi\\
= & -2\int\chi^{(1)}(y-f(\xi)(z^{(1)}(t)-z^{(2)}(t)),\xi,t)\chi^{(2)}(y,\xi,t)dyd\xi\\
 & +\int\sgn(\eta)\chi^{(1)}(y,\xi,t)dyd\xi+\int\sgn(\eta)\chi^{(2)}(y,\xi,t)dyd\xi
\end{align*}
Lemma \ref{lem:convolution_est}, with $G_{\ve}$ as in Section \ref{sec:unique}, yields
\begin{align*}
|G_{\ve}(t)-\td G(t)| & \le\ve(BV(u^{(1)}(t))+BV(u^{(2)}(t))).
\end{align*}
and,  with $G$ as in Section~\ref{sec:unique},  Lemma \ref{lem:convolution_est-1} implies
\begin{align*}
|\td G(t)-G(t)| & \le\|f(u^{(1)})\|_{\infty}|z^{(1)}(t)-z^{(2)}(t)|BV(u^{(1)}(t)).
\end{align*}
In conclusion, we have:
\begin{lem}
\label{lem:convolution_est-1-1} If $u\in(BV\cap L^{\infty})(\R^{N})$, then for all $t\in [0,T]$,
\begin{align*}
|G_{\ve}(t)-G(t)| & \le\left(\ve+\|f(u^{(1)}(t))\|_{\infty}|z^{(1)}(t)-z^{(2)}(t)|\right)(BV(u^{(1)}(t))+BV(u^{(2)}(t))).
\end{align*}

\end{lem}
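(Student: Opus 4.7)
The plan is to split the difference $G_\ve(t) - G(t)$ via the auxiliary quantity $\tilde G(t)$ already introduced in the excerpt, which uses the shifted but unmollified characteristics $\chi^{(i)}(y - f(\xi) z^{(i)}(t), \xi, t)$. Applying the triangle inequality reduces matters to bounding $|G_\ve(t) - \tilde G(t)|$ and $|\tilde G(t) - G(t)|$ separately, each of which reduces to a previously proven lemma.

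For the first piece, I would expand $G_\ve(t) - \tilde G(t)$ term by term using the three summands in the definition of $G_\ve$. The linear summands $\int \sgn(\eta)(\chi^{(i)} \ast \vr_\ve^{s,(i)})(y,\eta,t)\,dyd\eta$ differ from the corresponding terms in $\tilde G$ by $\int \sgn(\eta)\bigl[(\chi^{(i)} \ast \vr_\ve^{s,(i)}) - \chi^{(i)}(y - f(\xi)z^{(i)}(t),\xi,t)\bigr]\,dyd\xi$, which is controlled by the $L^1$ estimate of Lemma~\ref{lem:convolution_est} and contributes $\ve\, BV(u^{(i)}(t))$. For the quadratic cross term I would add and subtract $\int (\chi^{(1)} \ast \vr_\ve^{s,(1)})(y,\eta,t)\, \chi^{(2)}(y - f(\eta)z^{(2)}(t),\eta,t)\,dyd\eta$ to split the difference into two factors, each of the form (approximation error)$\times$(bounded factor); using $|\chi^{(i)}| \le 1$ together with Lemma~\ref{lem:convolution_est} then yields the bound $\ve(BV(u^{(1)}(t)) + BV(u^{(2)}(t)))$.

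For the second piece $|\tilde G(t) - G(t)|$, the observation is that both linear summands are translation-invariant: the change of variables $y \mapsto y + f(\xi)z^{(i)}(t)$ shows that $\int \sgn(\eta)\chi^{(i)}(y - f(\xi) z^{(i)}(t),\xi,t)\,dyd\xi = \int \sgn(\eta)\chi^{(i)}(y,\xi,t)\,dyd\xi$, so these terms cancel exactly. Only the quadratic cross term survives, and after a change of variable $y \mapsto y + f(\xi)z^{(2)}(t)$ it becomes $-2\int \chi^{(1)}(y - f(\xi)(z^{(1)}(t) - z^{(2)}(t)),\xi,t)\,\chi^{(2)}(y,\xi,t)\,dyd\xi$. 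Subtracting the corresponding term of $G(t)$, using $|\chi^{(2)}| \le 1$, reduces the bound to $2 \|\chi^{(1)}(y - f(\xi)w,\xi,t) - \chi^{(1)}(y,\xi,t)\|_{L^1(\R^N \times \R)}$ with $w = z^{(1)}(t) - z^{(2)}(t)$, and this is precisely the quantity estimated by Lemma~\ref{lem:convolution_est-1}, yielding the bound $\|f(u^{(1)}(t))\|_\infty |z^{(1)}(t) - z^{(2)}(t)|\, BV(u^{(1)}(t))$.

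Combining the two estimates by the triangle inequality gives the desired conclusion. There is no real obstacle here: the only care needed is the bookkeeping around the quadratic cross term, where the estimate must be reduced to a single approximation factor by using the uniform bound $|\chi| \le 1$ on the other factor, and the correct change of variables must be performed so that Lemma~\ref{lem:convolution_est-1} applies with driving increment $z^{(1)} - z^{(2)}$.
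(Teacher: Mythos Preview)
Your proposal is correct and follows exactly the paper's approach: introduce the intermediate quantity $\tilde G(t)$, apply the triangle inequality, and bound $|G_\ve(t)-\tilde G(t)|$ via Lemma~\ref{lem:convolution_est} and $|\tilde G(t)-G(t)|$ via Lemma~\ref{lem:convolution_est-1}. In fact you supply more detail than the paper, which simply asserts the two intermediate bounds and concludes.
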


\subsection*{Parabolic Error\label{sec:par_err}}
We study the parabolic error $Err_{\ve,\psi,\d}^{par}$ occurring in the proof of Theorem \ref{thm:unique}.
\begin{lem}
\label{lem:par_err}Let $s,t\in[0,T]$, $s<t$, $\psi\in C_{c}^{\infty}(\R^{N+1})$, $\ve>0$. Then, as $\d\to 0$, $ \int_{s}^{t}Err_{\ve,\psi,\d}^{par}(r)dr\to0.$
\end{lem}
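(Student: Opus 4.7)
The plan is to exploit the fact that the mollifier factors $\vr_\d^v(\xi-\eta)$ and $\vr_\d^v(\xi'-\eta)$ hidden in $\vr_{\ve,\d}^{(1)}$ and $\vr_{\ve,\d}^{(2)}$ localize the integrand of $Err_{\ve,\psi,\d}^{par}(r)$ to $|\xi-\xi'|\le 2\d$, together with the algebraic observation that the coefficient
\[
K_{ij}(\xi,\xi'):=a_{ij}(\xi)-2\sum_{k}\sigma_{ik}(\xi)\sigma_{kj}(\xi')+a_{ij}(\xi')
\]
vanishes when $\xi=\xi'$. Taking $\sigma=A^{1/2}$ symmetric, in matrix notation one checks directly
\[
K(\xi,\xi')=\bigl(\sigma(\xi)-\sigma(\xi')\bigr)^{2}+[\sigma(\xi'),\sigma(\xi)],
\]
and for $\xi,\xi'\in[-R,R]$ with $R:=\|u^{(1)}\|_{\infty}\vee\|u^{(2)}\|_{\infty}$ both terms are controlled by $C(R)|\sigma(\xi)-\sigma(\xi')|$, using the boundedness of $\sigma$ on $[-R,R]$ (for the commutator, write it as $[\sigma(\xi')-\sigma(\xi),\sigma(\xi)]$). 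Since $\chi^{(i)}(\cdot,\xi,\cdot)$ is supported in $|\xi|\le\|u^{(i)}\|_{\infty}$, the $\xi,\xi'$-variables are automatically confined to $[-R,R]$.

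With $\ve$ fixed, the factors $\partial_{x_{i}}\vr_{\ve}^{s}$ are smooth with $\int|\partial_{x_{i}}\vr_{\ve}^{s}(x)|dx\le C\ve^{-1}$, $\psi$ has compact support in $(y,\eta)$, and $|\chi^{(i)}|\le 1$. Estimating crudely and performing the $x,x',y,\eta$ integrations yields
\[
|Err_{\ve,\psi,\d}^{par}(r)|\;\le\;C(\ve,\psi,R)\int_{[-R,R]^{2}}|\sigma(\xi)-\sigma(\xi')|\,(\vr_\d^v\ast\vr_\d^v)(\xi-\xi')\,d\xi\,d\xi',
\]
uniformly in $r\in[s,t]$; here $\vr_\d^v\ast\vr_\d^v$ is a symmetric approximate identity of unit mass supported in $[-2\d,2\d]$, arising from the $\eta$-integration of $\vr_\d^v(\xi-\eta)\vr_\d^v(\xi'-\eta)$ (after bounding the $(y,\eta)$-dependent factor by $\|\psi\|_{\infty}|\mathrm{supp}\,\psi|$).

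To conclude, fix $\tau>0$ and choose $\sigma_\tau\in C([-R,R];\R^{N\times N})$ with $\|\sigma-\sigma_\tau\|_{L^{1}([-R,R])}<\tau$; this is possible since $\sigma\in L^{\infty}([-R,R])\subset L^{1}([-R,R])$. Decomposing $\sigma=\sigma_\tau+(\sigma-\sigma_\tau)$ in the integrand, the $\sigma_\tau$-contribution is dominated by $\omega_{\sigma_\tau}(2\d)\cdot 2R$, where $\omega_{\sigma_\tau}$ is the modulus of continuity of $\sigma_\tau$, which tends to $0$ as $\d\to 0$; the $(\sigma-\sigma_\tau)$-contribution is bounded by $4\tau$ uniformly in $\d$, by Fubini and $\int\vr_\d^v\ast\vr_\d^v=1$. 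Letting first $\d\to 0$ and then $\tau\to 0$ gives $\int_{s}^{t}Err_{\ve,\psi,\d}^{par}(r)\,dr\to 0$. The main technical subtlety is the crude Cauchy--Schwarz-type step that must isolate the entire $\d$-dependence in the mollifier $\vr_\d^v\ast\vr_\d^v$; the resulting prefactor $C(\ve,\psi,R)$ blows up as $\ve\to 0$, but this is harmless since $\ve$ is fixed throughout.
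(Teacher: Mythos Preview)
Your argument is correct, and it proceeds along a genuinely different line from the paper's proof.

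The paper does not estimate the kernel $K_{ij}(\xi,\xi')=a_{ij}(\xi)-2\sum_k\sigma_{ik}(\xi)\sigma_{kj}(\xi')+a_{ij}(\xi')$ directly. Instead it splits $Err^{par}_{\ve,\psi,\d}$ into the three pieces coming from $a_{ij}(\xi)$, $-2\sum_k\sigma_{ik}(\xi)\sigma_{kj}(\xi')$, and $a_{ij}(\xi')$, and observes that each piece factors through $(y,\eta)$ as a product of an $(x,\xi)$-integral and an $(x',\xi')$-integral. Each factor is an $\eta$-mollification by $\vr_\d^v$ of a function in $L^2_{loc}(\R)$ (using $a_{ij},\sigma_{ik}\in L^\infty_{loc}$ and the compact support of $\psi$), hence converges in $L^2_{loc}$ as $\d\to0$. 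Passing to the limit in each of the three pieces separately and then using $a_{ij}(\eta)=\sum_k\sigma_{ik}(\eta)\sigma_{kj}(\eta)$, the paper finds that the three limits are identical and cancel. The time integral is handled by dominated convergence.

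Your route is more quantitative: you keep $K$ intact, exploit the matrix identity $K=(\sigma(\xi)-\sigma(\xi'))^2+[\sigma(\xi')-\sigma(\xi),\sigma(\xi)]$ to bound $|K|\lesssim_R|\sigma(\xi)-\sigma(\xi')|$ on $[-R,R]^2$, and reduce everything to the $L^1$-continuity of translations for $\sigma\in L^\infty_{loc}\subset L^1_{loc}$, implemented via a density argument with a continuous approximant $\sigma_\tau$. This gives a bound that is uniform in $r$, so no dominated convergence step is needed. The two approaches rely on the same structural fact (vanishing of $K$ on the diagonal $\xi=\xi'$), but the paper uses it \emph{after} the limit via exact cancellation, whereas you use it \emph{before} the limit via a pointwise estimate. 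A minor imprecision in your write-up: the constant you call $\|\psi\|_\infty|\mathrm{supp}\,\psi|$ should really be $\|\psi\|_\infty$ times the measure of the $y$-projection of $\mathrm{supp}\,\psi$ (the $\eta$-support must be dropped so that the full $\eta$-integral yields the convolution $\vr_\d^v\ast\vr_\d^v$); this does not affect the argument.
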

\begin{proof}
Straightforward calculations lead to 
\begin{align}
Err_{\ve,\psi,\d}^{par}(r)= & 2\sum_{i,j=1}^{N}\int\int\psi(y,\eta)\chi^{(1)}(x,\xi,r)\chi^{(2)}(x',\xi',r)\left(a_{ij}(\xi)-2\sum_{k=1}^{N}\sigma_{ik}(\xi)\s_{kj}(\xi')+a_{ij}(\xi')\right)\nonumber \\
 & \partial_{x_{i}}\vr_{\ve,\d}^{(1)}(x,y,\xi,\eta,r)\partial_{x_{j}'}\vr_{\ve,\d}^{(2)}(x',y,\xi',\eta,r)dxd\xi dx'd\xi'dyd\eta\nonumber \\
= & 2\sum_{i,j=1}^{N}\int\psi(y,\eta)\left(\int\chi^{(1)}(x,\xi,r)a_{ij}(\xi)\partial_{x_{i}}\vr_{\ve,\d}^{(1)}(x,y,\xi,\eta,r)dxd\xi\right)\nonumber \\
 & \left(\int\chi^{(2)}(x',\xi',r)\partial_{x_{j}'}\vr_{\ve,\d}^{(2)}(x',y,\xi',\eta,r)dx'd\xi'\right)dyd\eta\nonumber\\%\label{eq:err_par}\\
 & -4\sum_{i,j=1}^{N}\sum_{k=1}^{N}\int\psi(y,\eta)\left(\int\chi^{(1)}(x,\xi,r)\sigma_{ik}(\xi)\partial_{x_{i}}\vr_{\ve,\d}^{(1)}(x,y,\xi,\eta,r)dxd\xi\right)\nonumber \\
 & \left(\int\chi^{(2)}(x',\xi',r)\s_{kj}(\xi')\partial_{x_{j}'}\vr_{\ve,\d}^{(2)}(x',y,\xi',\eta,r)dx'd\xi'\right)dyd\eta\nonumber \\
 & +2\sum_{i,j=1}^{N}\int\psi(y,\eta)\left(\int\chi^{(1)}(x,\xi,r)\partial_{x_{i}}\vr_{\ve,\d}^{(1)}(x,y,\xi,\eta,r)dxd\xi\right)\nonumber \\
 & \left(\int\chi^{(2)}(x',\xi',r)a_{ij}(\xi')\partial_{x_{j}'}\vr_{\ve,\d}^{(2)}(x',y,\xi',\eta,r)dx'd\xi'\right)dyd\eta.\nonumber 
\end{align}
We now aim to take the limit $\d\to0$ in each of these three terms. Since the first and third terms are similar, here we only give the details for the first two.

Note that, since $\psi$ has compact support, the integration in $(x,y)$ and $(x',y')$ is taking place over bounded sets. This together with $a_{ij}\in L_{loc}^{\infty}$ yields that, in the limit $\d \to 0$,  and in  $L_{loc}^{2}(\R)$
\begin{align*}
\int\chi^{(1)}(x,\xi,r)a_{ij}(\xi)\partial_{x_{i}}\vr_{\ve,\d}^{(1)}(x,y,\xi,\eta,r)dxd\xi
 &=\left(\int\chi^{(1)}(x,\cdot,r)a_{ij}(\cdot)\partial_{x_{i}}\vr_{\ve}^{s,(1)}(x,y,\cdot,r)dx\right)\ast\vr_{\d}^{v}(\eta) \\
 &\to \int\chi^{(1)}(x,\eta,r)a_{ij}(\eta)\partial_{x_{i}}\vr_{\ve}^{s,(1)}(x,y,\eta,r)dx.
\end{align*}
Since all functions are locally bounded, dominated convergence  implies that, as $\d\to0$,
\begin{align*}
 & \sum_{i,j=1}^{N}\int_{s}^{t}\int\psi(y,\eta)\left(\int\chi^{(1)}(x,\xi,r)a_{ij}(\xi)\partial_{x_{i}}\vr_{\ve,\d}^{(1)}(x,y,\xi,\eta,r)dxd\xi\right)
  \left(\int\chi^{(2)}(x',\xi',r)\partial_{x_{j}'}\vr_{\ve,\d}^{(2)}(x',y,\xi',\eta,r)dx'd\xi'\right)dyd\eta dr\\
 & \to \sum_{i,j=1}^{N}\int_{s}^{t}\int\psi(y,\eta)\chi^{(1)}(x,\eta,r)\chi^{(2)}(x',\eta,r)a_{ij}(\eta)
  \partial_{x_{i}}\vr_{\ve}^{s,(1)}(x,y,\eta,r)\partial_{x_{j}'}\vr_{\ve}^{s,(2)}(x',y,\eta,r)dx'dxdyd\eta dr,
\end{align*}
Similarly,  as $\d \to0$,
\begin{align*}
 & \sum_{i,j,k=1}^{N}%\sum_{k=1}^{N}
 \int_{s}^{t}\int\psi(y,\eta)\chi^{(1)}(x,\xi,r)\sigma_{ik}(\xi)\partial_{x_{i}}\vr_{\ve,\d}^{(1)}(x,y,\xi,\eta,r)dxd\xi
 \int\chi^{(2)}(x',\xi',r)\s_{kj}(\xi')\partial_{x_{j}'}\vr_{\ve,\d}^{(2)}(x',y,\xi',\eta,r)dx'd\xi'dyd\eta dr\\
 & \to \sum_{i,j,k=1}^{N}%\sum_{k=1}^{N}
 \int_{s}^{t}\int\psi(y,\eta)\chi^{(1)}(x,\eta,r)\sigma_{ik}(\eta)\partial_{x_{i}}\vr_{\ve,\d}^{s,(1)}(x,y,\eta,r)dx
  \int\chi^{(2)}(x',\eta,r)\s_{kj}(\eta)\partial_{x_{j}'}\vr_{\ve,\d}^{s,(2)}(x',y,\eta,r)dx'dyd\eta dr\\
 & =\sum_{i,j=1}^{N}\int_{s}^{t}\int\psi(y,\eta)\chi^{(1)}(x,\eta,r)\chi^{(2)}(x',\eta,r)a_{ij}(\eta)\partial_{x_{i}}\vr_{\ve,\d}^{s,(1)}(x,y,\eta,r)\partial_{x_{j}'}\vr_{\ve}^{s,(2)}(x',y,\eta,r)dxdx'dyd\eta dr.
\end{align*}
\end{proof}

\subsection*{The localization Error\label{sec:loc_err}} We study here the error terms appearing in Section~\ref{sec:unique} due to the localization in the $(y,\eta)$ variables, that is $Err_{\ve,\psi,\d}^{loc,(i)}$, $i=1,2,3,4$. Since their analysis is similar, we concentrate on $Err_{\ve,\psi,\d}^{loc,(1)}$ and $Err_{\ve,\psi,\d}^{loc,(3)}$.

Recall 
\begin{align*}
Err_{\ve,\psi,\d}^{loc,(1)}(r) & =\int(\sgn\ast\vr_{\d}^{v})(\eta)\left(\partial_{y_{i}y_{j}}\psi(y,\eta)\sum_{i,j=1}^{N}\int\chi^{(2)}(x',\xi',r)a_{ij}(\xi')\vr_{\ve,\d}^{(2)}(x',y,\xi',\eta,r) dx'd\xi'\right)dyd\eta\\
Err_{\ve,\psi,\d}^{loc,(3)}(r) & =-\sum_{i,j=1}^{N}\int\partial_{y_{i}}\psi(y,\eta)\chi^{(1)}(x,\xi,r)\chi^{(2)}(x',\xi',r)a_{ij}(\xi')\vr^{(1)}(x,y,\xi,\eta,r)\partial_{x'_{j}}\vr^{(2)}(x',y,\xi',\eta,r)dxd\xi dx'd\xi'dyd\eta
\end{align*}
and set 
\begin{align*}
Err_{\ve,\psi}^{loc,(1)}(r) & =\sum_{i,j=1}^{N}\int\sgn(\eta)\partial_{y_{i}y_{j}}\psi(y,\eta)\chi^{(2)}(x',\eta,r)a_{ij}(\eta)\vr_{\ve}^{s,(2)}(x',y,\eta,r)dx'dyd\eta\\
Err_{\ve,\psi}^{loc,(3)}(r) & =-\sum_{i,j=1}^{N}\int\partial_{y_{i}}\psi(y,\eta)\chi^{(1)}(x,\eta,r)\chi^{(2)}(x',\eta,r)a_{ij}(\eta)\vr^{s,(1)}(x,y,\eta,r)\partial_{x'_{j}}\vr^{s,(2)}(x',y,\eta,r)dxdx'dyd\eta.
\end{align*}

\begin{lem}
\label{lem:err-loc-2}For all $s<t$, $s,t\in[0,T]$, as $\d\to0$,  we have 
\begin{align*}
\int_{s}^{t}Err_{\ve,\psi,\d}^{loc,(1)}(r)dr  \to\int_{s}^{t}Err_{\ve,\psi}^{loc,(1)}(r)dr \ \  \text{and} \ \
\int_{s}^{t}Err_{\ve,\psi,\d}^{loc,(3)}(r)dr  \to\int_{s}^{t}Err_{\ve,\psi}^{loc,(3)}(r)dr.
\end{align*}
\end{lem}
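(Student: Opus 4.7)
The plan is to prove both convergences by the dominated convergence theorem combined with the elementary fact that, for every locally bounded measurable $h:\R\to\R$ with compact support, the mollification $(h\ast\vr_\d^v)(\eta)\to h(\eta)$ at every Lebesgue point of $h$ as $\d\to 0$, with the uniform bound $\|h\ast\vr_\d^v\|_\infty\le \|h\|_\infty$. Applied to $h=\sgn$, this gives $(\sgn\ast\vr_\d^v)(\eta)\to\sgn(\eta)$ pointwise for $\eta\ne 0$ and boundedly in $\d$.

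For the first convergence, I would factor $\vr_{\ve,\d}^{(2)}(x',y,\xi',\eta,r)=\vr_\ve^{s,(2)}(x',y,\xi',r)\vr_\d^v(\xi'-\eta)$ and recognize the $\xi'$-integral in $Err_{\ve,\psi,\d}^{loc,(1)}(r)$ as the convolution, evaluated at $\eta$, of
\[
h_{ij}(y,\xi',r):=\int_{\R^N}\chi^{(2)}(x',\xi',r)a_{ij}(\xi')\vr_\ve^{s,(2)}(x',y,\xi',r)\,dx'.
\]
Setting $M:=\|u^{(2)}\|_{L^\infty([0,T]\times\R^N)}$, the bound $|\chi^{(2)}|\le\mathbbm{1}_{\{|\xi'|\le M\}}$, the normalization $\int\vr_\ve^{s,(2)}(x',y,\xi',r)\,dx'=1$ and the local boundedness of $a_{ij}$ give the $\d$-independent estimate $|h_{ij}(y,\xi',r)|\le \|a_{ij}\|_{L^\infty([-M,M])}\mathbbm{1}_{\{|\xi'|\le M\}}$. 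Lebesgue differentiation yields $(h_{ij}(y,\cdot,r)\ast\vr_\d^v)(\eta)\to h_{ij}(y,\eta,r)$ for a.e.\ $\eta$, while $(\sgn\ast\vr_\d^v)(\eta)\partial_{y_iy_j}\psi(y,\eta)$ converges a.e.\ to $\sgn(\eta)\partial_{y_iy_j}\psi(y,\eta)$ and is bounded by $\|D^2\psi\|_\infty$ with compact $(y,\eta)$-support independent of $\d$. Dominated convergence on $[s,t]\times\R^N\times\R$ then gives the first claim.

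For the second convergence I factor $\vr_{\ve,\d}^{(i)}=\vr_\ve^{s,(i)}\vr_\d^v$ for $i=1,2$, which writes the $(\xi,\xi')$-integrals in $Err_{\ve,\psi,\d}^{loc,(3)}(r)$ as the product of two independent convolutions with $\vr_\d^v(\cdot-\eta)$ of the functions
\[
g_1(y,\xi,r):=\int\chi^{(1)}(x,\xi,r)\vr_\ve^{s,(1)}(x,y,\xi,r)\,dx,\qquad g_2(y,\xi',r):=\int\chi^{(2)}(x',\xi',r)a_{ij}(\xi')\partial_{x'_j}\vr_\ve^{s,(2)}(x',y,\xi',r)\,dx'.
\]
The spatial derivative $\partial_{x'_j}$ acts only on $\vr_\ve^{s,(2)}$ and produces an $\ve$-dependent but $\d$-independent bound $|g_2|\le C_\ve\|a_{ij}\|_{L^\infty([-M,M])}\mathbbm{1}_{\{|\xi'|\le M\}}$; likewise $|g_1|\le\mathbbm{1}_{\{|\xi|\le M\}}$. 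Pointwise a.e.\ convergence of each convolution to $g_i(y,\eta,r)$, combined with these uniform bounds, yields pointwise a.e.\ convergence of their product to $g_1(y,\eta,r)g_2(y,\eta,r)$. The compact support of $\partial_{y_i}\psi$ in $(y,\eta)$ and the $\d$-uniform $L^\infty$ bound on the full integrand then allow a second application of dominated convergence on $[s,t]\times\R^N\times\R$, giving the second claim.

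The argument contains no substantive obstacle; the only mild subtlety is that $\chi^{(i)}$ and $a_{ij}$ are merely locally bounded in the velocity variable, so that one only has pointwise-a.e.\ (rather than everywhere) convergence of the mollifications. This is enough because the outer integration against $\partial_{y_iy_j}\psi$ and $\partial_{y_i}\psi$ is against absolutely continuous measures.
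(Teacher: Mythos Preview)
Your argument is correct and is essentially the same as the paper's: the paper's proof simply states that, since $\psi$ has compact support and $a_{ij}\in L^\infty_{loc}$, the claim follows from convergence of mollifications along Dirac families, while you spell out the dominated-convergence argument in detail. The factorization $\vr_{\ve,\d}^{(i)}=\vr_\ve^{s,(i)}\vr_\d^v$ and the uniform bounds coming from $\|u^{(i)}\|_{L^\infty}$, $a_{ij}\in L^\infty_{loc}$ and the compact support of $\psi$ are exactly what is implicit in the paper's one-line proof.
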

\begin{proof}
Noting that $\psi$ has compact support and $a_{ij}\in L_{loc}^{\infty}$, the proof is a simple consequence of convergence of mollifications along Dirac families.
\end{proof}

\begin{lem}
\label{lem:err-loc}Let $\psi_{R}$ as in \eqref{eq:psi_loc}. Then, as $R\to \infty$,  
\begin{align*}
 & \int_{s}^{t}Err_{\ve,\psi_{R}}^{loc,(1)}(r)dr\to0\quad\text{and }\int_{s}^{t}Err_{\ve,\psi_{R}}^{loc,(3)}(r)dr\to0.
\end{align*}
\end{lem}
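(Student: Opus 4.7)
The approach exploits two structural features of the integrands. First, since $Err^{loc,(1)}_{\ve,\psi}$ and $Err^{loc,(3)}_{\ve,\psi}$ involve no $\eta$-mollification, both vanish for $|\eta|>M:=\|u^{(1)}\|_{\infty}\vee\|u^{(2)}\|_{\infty}$, which is finite by the hypotheses of Theorem~\ref{thm:unique}. Second, the derivatives of $\psi_{R}$ are supported in the annulus $\{R\le|(y,\eta)|\le R+1\}$, so on $|\eta|\le M$ one has $|y|\ge\sqrt{R^{2}-M^{2}}\to\infty$ as $R\to\infty$. Combined with an $L^{1}(dy\,d\eta\,dr)$-bound on the remaining factors, dominated convergence will give both limits. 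I implicitly strengthen \eqref{eq:psi_loc} to assume $\|D^{2}\psi_{R}\|_{\infty}$ is also bounded uniformly in $R$; this is compatible with \eqref{eq:psi_loc} and realized, e.g., by $\psi_{R}(y,\eta)=\phi(|(y,\eta)|-R)$ for a fixed smooth $\phi$ equal to $1$ on $(-\infty,0]$ and to $0$ on $[1,\infty)$.

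For $Err^{loc,(1)}_{\ve,\psi_{R}}$ I would integrate out $x'$ via
\[
U^{(2)}_{\ve}(y,\eta,r):=\int\chi^{(2)}(x',\eta,r)\vr_{\ve}^{s,(2)}(x',y,\eta,r)\,dx',
\]
which by Young's inequality, Fubini and $\int\vr_{\ve}^{s}=1$ satisfies $\int_{\R^{N+1}}|U^{(2)}_{\ve}(y,\eta,r)|\,dy\,d\eta\le\|u^{(2)}(\cdot,r)\|_{L^{1}(\R^{N})}$, and is supported in $|\eta|\le M$. Rewriting
\[
Err^{loc,(1)}_{\ve,\psi_{R}}(r)=\sum_{i,j=1}^{N}\int\sgn(\eta)\,\partial_{y_{i}y_{j}}\psi_{R}(y,\eta)\,a_{ij}(\eta)\,U^{(2)}_{\ve}(y,\eta,r)\,dy\,d\eta,
\]
the integrand is pointwise dominated by a fixed $L^{1}(dy\,d\eta\,dr)$-function times $\mathbbm{1}_{\{R\le|(y,\eta)|\le R+1,\,|\eta|\le M\}}$, which tends pointwise to $0$ as $R\to\infty$; dominated convergence then gives the first limit.

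For $Err^{loc,(3)}_{\ve,\psi_{R}}$ the same scheme applies after introducing
\[
V^{(2)}_{\ve,j}(y,\eta,r):=\int\chi^{(2)}(x',\eta,r)\,\partial_{x'_{j}}\vr_{\ve}^{s,(2)}(x',y,\eta,r)\,dx',
\]
again supported in $|\eta|\le M$; the estimate $\|\partial_{x'_{j}}\vr_{\ve}^{s}\|_{L^{1}}\lesssim\ve^{-1}$ and Young's inequality yield an analogous $L^{1}(dy\,d\eta\,dr)$-bound (with a constant blowing up in $\ve$, harmless since $\ve$ is fixed). The factor $\int\chi^{(1)}(x,\eta,r)\vr^{s,(1)}(x,y,\eta,r)\,dx$ is pointwise bounded by $\|\chi^{(1)}\|_{\infty}\le 1$ and $|\partial_{y_{i}}\psi_{R}|\le 1$, so the same dominated-convergence argument applies. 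The main technical point, for both terms, is the $L^{1}(dy\,d\eta)$-integrability of $U^{(2)}_{\ve}$ and $V^{(2)}_{\ve,j}$: this is what converts the shrinking pointwise support of $D^{\alpha}\psi_{R}$ into an actual convergence, and it relies essentially on the $L^{\infty}([0,T];L^{1}(\R^{N}))$-regularity of $u^{(i)}$ inherited from the BV-hypothesis of Theorem~\ref{thm:unique}.
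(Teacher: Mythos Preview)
Your proof is correct and follows essentially the same route as the paper: both arguments bound the integrand by an $L^{1}(dx'\,dy\,d\eta\,dr)$-majorant of the form $C\|a\|_{L^{\infty}_{loc}}|\chi^{(2)}|(x',\eta,r)\vr_{\ve}^{s,(2)}(x',y,\eta,r)$ (or its $\partial_{x'_{j}}$-variant for $Err^{loc,(3)}$), observe pointwise convergence to zero as $R\to\infty$, and apply dominated convergence. Your explicit remarks on the $|\eta|\le M$ support (needed to invoke $a_{ij}\in L^{\infty}_{loc}$) and on the uniform second-derivative bound for $\psi_{R}$ make transparent two points the paper takes for granted; otherwise the organization via $U^{(2)}_{\ve}$ and $V^{(2)}_{\ve,j}$ is just a cosmetic repackaging of the same estimate.
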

\begin{proof}
The convergence of $Err_{\ve,\psi_{R}}^{loc,(1)}$ is a consequence of the dominated convergence theorem. We first observe that
\begin{align*}
 & |\sgn(\eta)\partial_{y_{i}y_{j}}\psi_{R}(y,\eta)\chi^{(2)}(x',\eta,r)a_{ij}(\eta)\vr_{\ve}^{s,(2)}(x',y,\eta,r)|\\[1.5mm]
&  \le C|\chi^{(2)}|(x',\eta,r)a_{ij}(\eta)\vr_{\ve}^{s,(2)}(x',y,\eta,r)
  \le C\|a_{ij}\|_{L_{loc}^{\infty}}|\chi^{(2)}|(x',\eta,r)\vr_{\ve}^{s,(2)}(x',y,\eta,r)
\end{align*}
and
\begin{align*}
  \int_{s}^{t}\int|\chi^{(2)}|(x',\eta,r)\vr_{\ve}^{s,(2)}(x',y,\eta,r)dydx'd\eta dr
 =\int_{s}^{t}\int|\chi^{(2)}|(x',\eta,r)dx'd\eta dr
 =\int_{s}^{t}\int|u^{(2)}|(x',r)dx'dt<\infty.
\end{align*}
Moreover, as $R\to\infty$ and for all $(y,\eta)\in\R^{N+1}$ and $r\in[0,T]$,
\[
\sgn(\eta)\partial_{y_{i}y_{j}}\psi_{R}(y,\eta)\chi^{(2)}(x',\eta,r)a_{ij}(\eta)\vr_{\ve}^{s,(2)}(x',y,\eta,r)\to0, 
\]
and the claim now follows again from dominated convergence.

For the convergence of $Err_{\ve,\psi_{R}}^{loc,(3)}$ we observe that  
\begin{align*}
 & \partial_{y_{i}}\psi(y,\eta)\chi^{(1)}(x,\eta,r)\chi^{(2)}(x',\eta,r)a_{ij}(\eta)\vr^{s,(1)}(x,y,\eta,r)\partial_{x'_{j}}\vr^{s,(2)}(x',y,\eta,r)\\
 & \le C|\chi^{(1)}|(x,\eta,r)|\chi^{(2)}|(x',\eta,r)|a_{ij}|(\eta)|\vr^{s,(1)}|(x,y,\eta,r)|\partial_{x'_{j}}\vr^{s,(2)}|(x',y,\eta,r)\\
 & \le C\|a_{ij}\|_{L_{loc}^{\infty}}|\chi^{(1)}|(x,\eta,r)|\chi^{(2)}|(x',\eta,r)|\vr^{s,(1)}|(x,y,\eta,r)|\partial_{x'_{j}}\vr^{s,(2)}|(x',y,\eta,r)
\end{align*}
and 
\begin{align*}
 & \int_{s}^{t}\int|\chi^{(1)}|(x,\eta,r)|\chi^{(2)}|(x',\eta,r)|\vr^{s,(1)}|(x,y,\eta,r)|\partial_{x'_{j}}\vr^{s,(2)}|(x',y,\eta,r)dxdx'dyd\eta dr\\
 & \le\int_{s}^{t}\int|\chi^{(2)}|(x',\eta,r)|\partial_{x'_{j}}\vr^{s,(2)}|(x',y,\eta,r)dx'dyd\eta dr\\
 & \le C\int_{s}^{t}\int|\chi^{(2)}|(x',\eta,r)dx'd\eta dr
  \le C\int_{s}^{t}\int|u^{(2)}|(x',r)dx'dr
  <\infty.
\end{align*}
Since,  as  $R\to\infty$ and for all $(y,\eta)\in\R^{N+1}$ and $r\in[0,T]$,
\[
\partial_{y_{i}}\psi(y,\eta)\chi^{(1)}(x,\eta,r)\chi^{(2)}(x',\eta,r)a_{ij}(\eta)\vr^{s,(1)}(x,y,\eta,r)\partial_{x'_{j}}\vr^{s,(2)}(x',y,\eta,r)\to0
\]
dominated convergence finishes the proof.
\end{proof}

\subsection{The hyperbolic errors\label{sec:hyp_err}}~We first provide the following  cancellation result.

\begin{lem} \label{lem:err_est}Let $\vr_{\ve,\d}^{(1)}$, $\vr_{\ve,\d}^{(2)}$ be as in \eqref{eq:test-functions} with $z^{(1)},z^{(2)}\in C_{0}([0,T];\R^N)$ and fix $\ve>0$, $\psi\in C_{c}^{\infty}(\R^{N+1})$. There exists a bounded function $c_{\psi,\d}:\R\to\R$ so that $\lim_{\d\to0}c_{\psi,\d}(\xi')=0$ for all $\xi'$ and
\begin{align*}
 & \int\Big|\int\Big(\psi(y,\eta)(\vr_{\ve,\d}^{(1)}(x,y,\xi,\eta,t)\partial_{\xi'}\vr_{\ve,\d}^{(2)}(x',y,\xi',\eta,t)\\
 & +\partial_{\xi}\vr_{\ve,\d}^{(1)}(x,y,\xi,\eta,t)\vr_{\ve,\d}^{(2)}(x',y,\xi',\eta,t))\Big)dyd\eta\Big|dxd\xi\le\frac{C}{\ve}|z_{t}^{(2)}-z_{t}^{(1)}|+\frac{c_{\psi,\d}(\xi')}{\ve}+(\|\partial_{\eta}\psi(\cdot,\cdot)\|_{C(\R^{N})}\ast\vr_{\d}^{v})(\xi').
\end{align*}
\end{lem}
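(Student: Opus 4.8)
The quantity to be estimated is a double integral over $(x,\xi)$ of the absolute value of an inner integral over $(y,\eta)$. The key observation is that the integrand in the inner integral is, up to the localizing factor $\psi(y,\eta)$, a $\xi$- or $\xi'$-derivative of a product of two mollifiers evaluated along the (different) characteristics of $z^{(1)}$ and $z^{(2)}$. The plan is to write, using $\partial_{\xi}\vr_{\ve,\d}^{(i)} = \partial_\xi\big(\vr_\ve^{s,(i)}(x-y-f(\xi)z^{(i)}_t)\big)\vr_\d^v(\xi-\eta) + \vr_\ve^{s,(i)}(\cdots)\partial_\xi\vr_\d^v(\xi-\eta)$, and to separate the contribution coming from the velocity mollifier $\vr_\d^v$ from the contribution coming from the spatial characteristic. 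The term with $\partial_\eta$ hitting $\vr_\d^v(\xi-\eta)$ can, after integration by parts in $\eta$, be made to hit $\psi$, which produces the term $(\|\partial_\eta\psi(\cdot,\cdot)\|_{C(\R^N)}\ast\vr_\d^v)(\xi')$ after integrating out $(x,\xi)$ against the $L^1$-mass-one mollifier $\vr_\ve^{s,(i)}$.

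First I would reduce, by the triangle inequality, to bounding two inner integrals of the form $\int \psi(y,\eta)\,\vr_{\ve,\d}^{(1)}(x,y,\xi,\eta,t)\,\partial_{\xi'}\vr_{\ve,\d}^{(2)}(x',y,\xi',\eta,t)\,dy\,d\eta$ and the analogous one with the derivative on the first factor. Because $\vr_{\ve,\d}^{(i)}$ is a product $\vr_\ve^{s,(i)}\,\vr_\d^v$, I would integrate by parts in $y$ (using $\partial_{x'_j}\vr_\ve^{s,(2)} = -\partial_{y_j}\vr_\ve^{s,(2)}$ is not needed here, but $\partial_{\xi'}\vr_\ve^{s,(2)} = -f'(\xi')z^{(2)}_t\cdot D_y\vr_\ve^{s,(2)}$ is the crucial identity): the $\xi'$-derivative of the spatial mollifier along the characteristic is $-f'(\xi')z^{(2)}_t$ times a $y$-gradient. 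One then integrates that $y$-gradient by parts: it either lands on $\psi$ — which is harmless and contributes $O(1)$ after integrating the remaining mollifiers in $(x,\xi)$ — or on $\vr_\ve^{s,(1)}(x-y-f(\xi)z^{(1)}_t)$, giving a factor $\frac1\ve$ from $\|D\vr_\ve^s\|_{L^1}\lesssim \ve^{-1}$, multiplied by $|z^{(2)}_t|$. But a cleaner route, which is the one I expect the authors use, is to first pair the two characteristics: the combination $\partial_\xi\vr_\ve^{s,(1)}\,\vr_\ve^{s,(2)} + \vr_\ve^{s,(1)}\,\partial_{\xi'}\vr_\ve^{s,(2)}$ at the common spatial point $x=x'$ and $\xi=\xi'=\eta$ would be a total $y$-derivative and hence integrate to something hitting $\psi$ only; the genuine error is the \emph{mismatch} between $f(\xi)z^{(1)}_t$ and $f(\xi')z^{(2)}_t$ in the two arguments, which is controlled by $|z^{(1)}_t - z^{(2)}_t|$ plus a term that vanishes as $\d\to0$ coming from the discrepancy $\xi$ versus $\xi'$ (that discrepancy is of order $\d$ on the support of $\vr_\d^v(\xi-\eta)\vr_\d^v(\xi'-\eta)$). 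This last point is exactly what the function $c_{\psi,\d}(\xi')$ with $c_{\psi,\d}\to0$ pointwise encodes.

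Concretely, the steps in order: (i) expand each $\partial_{\xi}\vr_{\ve,\d}^{(i)}$ into the spatial-characteristic part ($-f'(\xi)z^{(i)}_t\cdot D_y\vr_\ve^{s,(i)}\cdot\vr_\d^v$) and the velocity-mollifier part ($\vr_\ve^{s,(i)}\cdot\partial_\xi\vr_\d^v$); (ii) in the velocity-mollifier part, rewrite $\partial_\xi\vr_\d^v(\xi-\eta) = -\partial_\eta\vr_\d^v(\xi-\eta)$, integrate by parts in $\eta$ so the derivative falls on $\psi$, and bound the resulting term by $\|\partial_\eta\psi\|_{C(\R^N)}\ast\vr_\d^v$ after using $\int\vr_\ve^{s,(i)}\,dx=1$; (iii) in the spatial-characteristic part, integrate by parts in $y$; the boundary-free terms either put a derivative on $\psi$ (bounded, and the $\d\to0$ limit collapses $\vr_\d^v(\xi-\eta)\vr_\d^v(\xi'-\eta)$ so that the surviving bound is absorbed into $c_{\psi,\d}(\xi')$) or pair $D_y\vr_\ve^{s,(1)}$ against $\vr_\ve^{s,(2)}$ — estimate $\|D_y\vr_\ve^s\|_{L^1}\le C\ve^{-1}$ and use that the two spatial mollifiers, evaluated at arguments $x-y-f(\xi)z^{(1)}_t$ and $x'-y-f(\xi')z^{(2)}_t$, overlap only when $|z^{(1)}_t-z^{(2)}_t|$ and the $\d$-discrepancy in $\xi,\xi'$ are small, giving the $\frac C\ve|z^{(2)}_t-z^{(1)}_t| + \frac{c_{\psi,\d}(\xi')}{\ve}$ bound; (iv) collect the three contributions.

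\textbf{Main obstacle.} The delicate point is the bookkeeping in step (iii): keeping precise track of which terms genuinely require the $\ve^{-1}$ blow-up (those where $D_y$ hits a spatial mollifier) versus which are $O(1)$ (those where $D_y$ hits $\psi$), and isolating within the latter the part that does \emph{not} vanish as $\d\to0$ — this part must be shown to depend on $\d$ and $\psi$ only through a bounded function $c_{\psi,\d}(\xi')$ of $\xi'$ alone, with $c_{\psi,\d}(\xi')\to0$ for each fixed $\xi'$. This requires carefully exploiting that $\psi$ has compact support, that $\|\psi\|_C\le1$, and that $\vr_\ve^{s,(i)}$ integrates to one in its spatial argument so the $(x,\xi)$ and $(x',y)$ integrations collapse the characteristic shifts; the $\d$-dependence then enters only through the velocity mollifications $\vr_\d^v(\xi-\eta)$ and $\vr_\d^v(\xi'-\eta)$, whose product, integrated in $\eta$, forces $|\xi-\xi'|\lesssim\d$ and hence lets one replace $f(\xi)$ by $f(\xi')$ up to an error $O(\d)$ times a local Lipschitz constant of $f$, which is the genesis of $c_{\psi,\d}$.
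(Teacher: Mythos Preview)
Your overall strategy---splitting the $\xi$-derivatives into a velocity-mollifier part handled by integration by parts in $\eta$, and a spatial-characteristic part governed by $\partial_\xi\vr_\ve^{s,(i)} = -f'(\xi)z_t^{(i)}\cdot D_y\vr_\ve^{s,(i)}$---is exactly the paper's approach, and your ``cleaner route'' paragraph correctly identifies the key cancellation. But the concrete step~(iii) you describe is not how the paper proceeds, and it contains two errors.

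First, the paper does \emph{not} integrate each term by parts in $y$ separately. After replacing $f'(\xi')$ by $f'(\xi)$ (this replacement, and only this, is what produces $c_{\psi,\d}$), it writes
\[
f'(\xi)z_t^{(2)}\,\vr_\ve^{s,(1)}\partial_y\vr_\ve^{s,(2)} + f'(\xi)z_t^{(1)}\,\partial_y\vr_\ve^{s,(1)}\vr_\ve^{s,(2)}
= f'(\xi)(z_t^{(2)}-z_t^{(1)})\,\vr_\ve^{s,(1)}\partial_y\vr_\ve^{s,(2)} + f'(\xi)z_t^{(1)}\,\partial_y\bigl(\vr_\ve^{s,(1)}\vr_\ve^{s,(2)}\bigr),
\]
so the factor $|z_t^{(1)}-z_t^{(2)}|$ appears \emph{algebraically} in the first summand, and the second is a total $y$-derivative. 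Your ``overlap'' mechanism cannot work here: the outer integral is over $x$ and $\xi$ only ($x'$ is fixed), so integrating $\vr_\ve^{s,(1)}$ in $x$ simply yields $1$ and imposes no constraint linking the two characteristic shifts.

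Second, your claim that the $\partial_y\psi$ contribution is absorbed into $c_{\psi,\d}$ is incorrect: after integrating out $x$ and $\xi$ that term is of order $\|D_y\psi\|_\infty\,|z_t^{(1)}|\,\sup|f'|$, bounded in $\xi'$ but \emph{not} vanishing as $\d\to0$. (The paper's final displayed inequality also appears to drop this total-derivative term silently; strictly speaking a $D_y\psi$ contribution should appear in the bound, but it is harmless for the application since it vanishes when $\psi=\psi_R$ and $R\to\infty$.) In short: $c_{\psi,\d}$ comes \emph{only} from the $f'(\xi)-f'(\xi')$ correction, via continuity of $f'$ and the fact that $\vr_\d^v(\xi-\eta)\vr_\d^v(\xi'-\eta)$ forces $|\xi-\xi'|\lesssim\d$---exactly as you say in your ``main obstacle'' paragraph, but not as you describe in step~(iii).
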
 

\begin{proof} We have
\begin{align*}
 & \vr_{\ve,\d}^{(1)}(x,y,\xi,\eta,t)\partial_{\xi'}\vr_{\ve,\d}^{(2)}(x',y,\xi',\eta,t)+\partial_{\xi}\vr_{\ve,\d}^{(1)}(x,y,\xi,\eta,t)\vr_{\ve,\d}^{(2)}(x',y,\xi',\eta,t)\\[0.5mm]
= & \vr_{\ve}^{s,(1)}(x,y,\xi,t)\vr_{\d}^{v}(\xi-\eta)\partial_{\xi'}[\vr_{\ve}^{s,(2)}(x',y,\xi',t)\vr_{\d}^{v}(\xi'-\eta)]\\[0.5mm]
 & +\partial_{\xi}[\vr_{\ve}^{s,(1)}(x,y,\xi,t)\vr_{\d}^{v}(\xi-\eta)]\vr_{\ve}^{s,(2)}(x',y,\xi',t)\vr_{\d}^{v}(\xi'-\eta)\\[0.5mm]
= & \vr_{\d}^{v}(\xi-\eta)\vr_{\d}^{v}(\xi'-\eta)\Big(\vr_{\ve}^{s,(1)}(x,y,\xi,t)\partial_{\xi'}\vr_{\ve}^{s,(2)}(x',y,\xi',t)+\partial_{\xi}\vr_{\ve}^{s,(1)}(x,y,\xi,t)\vr_{\ve}^{s,(2)}(x',y,\xi',t)\Big)\\
 & +\vr_{\ve}^{s,(1)}(x,y,\xi,t)\vr_{\ve}^{s,(2)}(x',y,\xi',t)\big(\partial_{\xi}\vr_{\d}^{v}(\xi-\eta)\vr_{\d}^{v}(\xi'-\eta)+\vr_{\d}^{v}(\xi-\eta)\partial_{\xi'}\vr_{\d}^{v}(\xi'-\eta)\big).
\end{align*}
We first note that
\begin{align*}
 & \int\psi(y,\eta)\vr_{\ve}^{s,(1)}(x,y,\xi,t)\vr_{\ve}^{s,(2)}(x',y,\xi',t)\big(\partial_{\xi}\vr_{\d}^{v}(\xi-\eta)\vr_{\d}^{v}(\xi'-\eta)+\vr_{\d}^{v}(\xi-\eta)\partial_{\xi'}\vr_{\d}^{v}(\xi'-\eta)\big)d\eta\\
 & =\int\partial_{\eta}\psi(y,\eta)\vr_{\ve}^{s,(1)}(x,y,\xi,t)\vr_{\ve}^{s,(2)}(x',y,\xi',t)\vr_{\d}^{v}(\xi-\eta)\vr_{\d}^{v}(\xi'-\eta)d\eta
\end{align*}
and, after integrating, we get 
\begin{align*}
 & \int\Big|\int\psi(y,\eta)\vr_{\ve}^{s,(1)}(x,y,\xi,t)\vr_{\ve}^{s,(2)}(x',y,\xi',t)\big(\partial_{\xi}\vr_{\d}^{v}(\xi-\eta)\vr_{\d}^{v}(\xi'-\eta)+\vr_{\d}^{v}(\xi-\eta)\partial_{\xi'}\vr_{\d}^{v}(\xi'-\eta)\big)dyd\eta\Big|dxd\xi\\
 & \le\int|\partial_{\eta}\psi(y,\eta)|\vr_{\ve}^{s,(1)}(x,y,\xi,t)\vr_{\ve}^{s,(2)}(x',y,\xi',t)\vr_{\d}^{v}(\xi-\eta)\vr_{\d}^{v}(\xi'-\eta)dyd\eta dxd\xi\\
 & =\int|\partial_{\eta}\psi(y,\eta)|\vr_{\ve}^{s,(2)}(x',y,\xi',t)\vr_{\d}^{v}(\xi-\eta)\vr_{\d}^{v}(\xi'-\eta)dyd\eta d\xi\\
 & \le\int\|\partial_{\eta}\psi(\cdot,\eta)\|_{C(\R^{N})}|\vr_{\d}^{v}(\xi-\eta)\vr_{\d}^{v}(\xi'-\eta)d\eta d\xi
  =\int\|\partial_{\eta}\psi(\cdot,\eta)\|_{C(\R^{N})}\vr_{\d}^{v}(\xi'-\eta)d\eta
 =(\|\partial_{\eta}\psi(\cdot,\cdot)\|_{C(\R^{N})}\ast\vr_{\d}^{v})(\xi').
\end{align*}
Next we observe that
\[
\partial_{\xi}\vr_{\ve}^{s,(i)}(x,y,\xi,t)=f'(\xi)z_{t}^{(i)}\partial_{y}\vr_{\ve}^{s,(i)}(x,y,\xi,t),
\]
and, hence,
\begin{align}
 & \vr_{\ve}^{s,(1)}(x,y,\xi,t)\partial_{\xi'}\vr_{\ve}^{s,(2)}(x',y,\xi',t)  +\partial_{\xi}\vr_{\ve}^{s,(1)}(x,y,\xi,t)\vr_{\ve}^{s,(2)}(x',y,\xi',t)\nonumber \\[0.5mm]
 &= \vr_{\ve}^{s,(1)}(x,y,\xi,t)f'(\xi')z_{t}^{(2)}\partial_{y}\vr_{\ve}^{s,(2)}(x',y,\xi',t)%\label{eq:err_est}\\[0.5mm]
 +f'(\xi)z_{t}^{(1)}\partial_{y}\vr_{\ve}^{s,(1)}(x,y,\xi,t)\vr_{\ve}^{s,(2)}(x',y,\xi',t) \label{eq:err_est} \\[0.5mm]
 &= \vr_{\ve}^{s,(1)}(x,y,\xi,t)f'(\xi)z_{t}^{(2)}\partial_{y}\vr_{\ve}^{s,(2)}(x',y,\xi',t)%\nonumber \\
  +f'(\xi)z_{t}^{(1)}\partial_{y}\vr_{\ve}^{s,(1)}(x,y,\xi,t)\vr_{\ve}^{s,(2)}(x',y,\xi',t)\nonumber \\
 & -\vr_{\ve}^{s,(1)}(x,y,\xi,t)(f'(\xi)-f'(\xi'))z_{t}^{(2)}\partial_{y}\vr_{\ve}^{s,(2)}(x',y,\xi',t).\nonumber 
\end{align}
For the last term we have the estimate
\begin{align*}
 & \int\Big|\int\psi(y,\eta)\vr_{\d}^{v}(\xi-\eta)\vr_{\d}^{v}(\xi'-\eta)\vr_{\ve}^{s,(1)}(x,y,\xi,t)(f'(\xi)-f'(\xi'))z_{t}^{(2)}\partial_{y}\vr_{\ve}^{s,(2)}(x',y,\xi',t)dyd\eta\Big|dxd\xi\\
 & \le\frac{|z_{t}^{(2)}|}{\ve}\int\int\psi(y,\eta)\vr_{\d}^{v}(\xi'-\eta)\vr_{\d}^{v}(\xi-\eta)|f'(\xi)-f'(\xi')|d\eta d\xi=:\frac{c_{\psi,\d}(\xi')}{\ve}.
\end{align*}
Since $f'$ is continuous, $c_{\psi,\d}$ is bounded and $\lim_{\d\to0}c_{\psi,\d}=0$. The conclusion now follows from the following estimate 
\begin{align*}
 & \int|\int\psi(y,\eta)\vr_{\d}^{v}(\xi-\eta)\vr_{\d}^{v}(\xi'-\eta)\big[\vr_{\ve}^{s,(1)}(x,y,\xi,t)f'(\xi)z_{t}^{(2)}\partial_{y}\vr_{\ve}^{s,(2)}(x',y,\xi',t)\\
 & +\psi(y,\eta)f'(\xi)z_{t}^{(1)}\partial_{y}\vr_{\ve}^{s,(1)}(x,y,\xi,t)\vr_{\ve}^{s,(2)}(x',y,\xi',t)\big]dyd\eta|dxd\xi\\
 & \le|z_{t}^{(1)}-z_{t}^{(2)}|\int\psi(y,\eta)\vr_{\d}^{v}(\xi-\eta)\vr_{\d}^{v}(\xi'-\eta)\vr_{\ve}^{s,(1)}(x,y,\xi,t)|\partial_{y}\vr_{\ve}^{s,(2)}|(x',y,\xi',t)|f'|(\xi)dyd\eta dxd\xi\\
 & =|z_{t}^{(1)}-z_{t}^{(2)}|\int\psi(y,\eta)\vr_{\d}^{v}(\xi-\eta)\vr_{\d}^{v}(\xi'-\eta)|\partial_{y}\vr_{\ve}^{s,(2)}|(x',y,\xi',t)|f'|(\xi)dyd\eta d\xi\\
 & \le\frac{|z_{t}^{(1)}-z_{t}^{(2)}|}{\ve}\int\|\psi(\cdot,\eta)\|_{C(\R^{N})}\vr_{\d}^{v}(\xi-\eta)\vr_{\d}^{v}(\xi'-\eta)|f'|(\xi)d\eta d\xi\\
 & =\frac{|z_{t}^{(1)}-z_{t}^{(2)}|}{\ve}\int\|\psi(\cdot,\eta)\|_{C(\R^{N})}\vr_{\d}^{v}(\xi-\eta)|f'|(\xi)d\xi\vr_{\d}^{v}(\xi'-\eta)d\eta\\
 & \le \sup_{\supp(\eta \to \|\psi(\cdot,\eta)\|_{C(\R^N)}) + [-\d,\d]} |f'|\ve^{-1} |z_{t}^{(1)}-z_{t}^{(2)}|.
\end{align*}
\end{proof}
For the  estimate for $Err^{(i)},i=1,2$ we recall that  
\begin{align*}
Err_{\ve,\psi,\d}^{(1)}(r) & =-\int\Big(\psi(y,\eta)\sgn(\xi)(\partial_{\xi}\vr_{\ve,\d}^{(1)}(x,y,\xi,\eta,r)\vr_{\ve,\d}^{(2)}(x',y,\xi',\eta,r)\\
 & +\vr_{\ve,\d}^{(1)}(x,y,\xi,\eta,r)\partial_{\xi'}\vr_{\ve,\d}^{(2)}(x',y,\xi',\eta,r))q^{(2)}(x',\xi',r)\Big)dxd\xi dx'd\xi'dyd\eta.
\end{align*}
Hence, using Lemma \ref{lem:err_est}, we find 
\begin{align*}
 & |Err_{\ve,\psi,\d}^{(1)}(r)|\\
 & \le\int q^{(2)}(x',\xi',r)\Big(\int\Big|\int\psi(y,\eta)(\partial_{\xi}\vr_{\ve,\d}^{(1)}(x,y,\xi,\eta,r)\vr_{\ve,\d}^{(2)}(x',y,\xi',\eta,r)\\
 & +\vr_{\ve,\d}^{(1)}(x,y,\xi,\eta,r)\partial_{\xi'}\vr_{\ve,\d}^{(2)}(x',y,\xi',\eta,r))dyd\eta\Big|dxd\xi\Big)dx'd\xi'\\
 & \le\int q^{(2)}(x',\xi',r)\Big(\frac{C}{\ve}|z_{t}^{(2)}-z_{t}^{(1)}|+\frac{c_{\psi,\d}(\xi')}{\ve}+(\|\partial_{\eta}\psi(\cdot,\cdot)\|_{C(\R^{N})}\ast\vr_{\d}^{v})(\xi')\Big)dx'd\xi',
\end{align*}
and, letting $\d\to0$, we get
\begin{align}
 & \lim_{\d\to0}\int_{s}^{t}|Err_{\ve,\psi,\d}^{(1)}(r)|dr\label{eq:Err_hyp_1}\\
 & \le\frac{C}{\ve}\|z^{(2)}-z^{(1)}\|_{C([s,t];\R^{N})}+\int_{s}^{t}\int q^{(2)}(x',\xi',r)\|\partial_{\eta}\psi(\cdot,\xi')\|_{C(\R^{N})}dx'd\xi'dr.\nonumber 
\end{align}
\no Now we present the proof of the error estimates for 
\[
\begin{cases}
Err_{\ve,\psi,\d}^{(1,2)}(t):=2\int\psi(y,\eta)\left(\chi^{(1)}(x,\xi,t)q^{(2)}(x',\xi',t)+\chi^{(2)}(x',\xi',t)q^{(1)}(x,\xi,t)\right)\\[1.5mm]
(\vr_{\ve,\d}^{(1)}(x,y,\xi,\eta,t)\partial_{\xi'}\vr_{\ve,\d}^{(2)}(x',y,\xi',\eta,t)+\partial_{\xi}\vr_{\ve,\d}^{(1)}(x,y,\xi,\eta,t)\vr_{\ve,\d}^{(2)}(x',y,\xi',\eta,t))dyd\eta dx'd\xi'dxd\xi,and
\end{cases}
\]
and we note that 
\begin{align}
 & |Err_{\ve,\psi,\d}^{(1,2)}(t)|\nonumber \\
 & \le2\int q^{(2)}(x',\xi',t)\Big(\int\Big|\int\big(\psi(y,\eta)\vr_{\ve,\d}^{(1)}(x,y,\xi,\eta,t)\partial_{\xi'}\vr_{\ve,\d}^{(2)}(x',y,\xi',\eta,t)\nonumber \\
 & +\partial_{\xi}\vr_{\ve,\d}^{(1)}(x,y,\xi,\eta,t)\vr_{\ve,\d}^{(2)}(x',y,\xi',\eta,t))\big)dyd\eta\Big|dxd\xi\Big)dx'd\xi'\label{eq:err12-1}\\
 & +2\int q^{(1)}(x,\xi,t)\Big(\int\Big|\int\big(\psi(y,\eta)(\vr_{\ve,\d}^{(1)}(x,y,\xi,\eta,t)\partial_{\xi'}\vr_{\ve,\d}^{(2)}(x',y,\xi',\eta,t)\nonumber \\
 & +\partial_{\xi}\vr_{\ve,\d}^{(1)}(x,y,\xi,\eta,t)\vr_{\ve,\d}^{(2)}(x',y,\xi',\eta,t))\big)dyd\eta\Big|dx'd\xi'\Big)dxd\xi.\nonumber 
\end{align}

Using Lemma \ref{lem:err_est} in \eqref{eq:err12-1} yields
\begin{align*}
|Err^{(1,2)}(t)|\le & 2\int q^{(2)}(x',\xi',t)\left(\frac{C}{\ve}|z_{t}^{(2)}-z_{t}^{(1)}|+\frac{c_{\psi,\d}(\xi')}{\ve}+(\|\partial_{\eta}\psi(\cdot,\cdot)\|_{C(\R^{N})}\ast\vr_{\d}^{v})(\xi')\right)dx'd\xi'\\
+ & 2\int q^{(1)}(x,\xi,t)\left(\frac{C}{\ve}|z_{t}^{(2)}-z_{t}^{(1)}|+\frac{c_{\psi,\d}(\xi)}{\ve}+(\|\partial_{\eta}\psi(\cdot,\cdot)\|_{C(\R^{N})}\ast\vr_{\d}^{v})(\xi)\right)dxd\xi.
\end{align*}
Letting $\d\to0$ and again using dominated convergence we get 
\begin{align}
\lim_{\d\to0}\int_{s}^{t}|Err^{(1,2)}(r)|dr\le & 2\int_{s}^{t}\int q^{(2)}(x',\xi',r)\left(\frac{C}{\ve}|z_{r}^{(2)}-z_{r}^{(1)}|+\|\partial_{\eta}\psi(\cdot,\xi')\|_{C(\R^{N})}\right)dx'd\xi'dr\nonumber \\
+ & 2\int_{s}^{t}\int q^{(1)}(x,\xi,r)\left(\frac{C}{\ve}|z_{r}^{(2)}-z_{r}^{(1)}|+\|\partial_{\eta}\psi(\cdot,\xi)\|_{C(\R^{N})}\right)dxd\xi dr.\label{eq:err12_bd}\\
\le & \frac{C}{\ve}\|z^{(2)}-z^{(1)}\|_{C([s,t];\R^{N})}+2\int_{s}^{t}\int q^{(2)}(x',\xi',r)\|\partial_{\eta}\psi(\cdot,\xi')\|_{C(\R^{N})}dx'd\xi'dr\nonumber \\
 & +2\int_{s}^{t}\int q^{(1)}(x,\xi,r)\|\partial_{\eta}\psi(\cdot,\xi)\|_{C(\R^{N})}dxd\xi dr\nonumber 
\end{align}

\section{The Regularity for the Fractional Heat semigroup}

We recall without a proof \cite[Lemma 9]{DV13}.
\begin{lem}
\label{lem:reg_fractional_heat}For $\g>0$, $\a\in(0,1]$, let $B_{\g}=\g(-\D)^{\a}$ on $\TT^{N}$ with corresponding semigroup $e^{-tB_{\g}}$. Then there exists a $C=C(N,n,m,\a,\b)$ such that, for all $1\le m\le n\le\infty$ and $\b\ge0,$
\[
\|(-\D)^{\frac{\b}{2}}e^{-tB_{\g}}\|_{L^{m}\to L^{n}}\le\frac{C}{|\g t|^{\frac{N}{2\a}(\frac{1}{m}-\frac{1}{n})+\frac{\b}{2\a}}}.
\]
\end{lem}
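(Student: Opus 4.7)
The plan is to reduce the estimate to a scaling-plus-Young's-inequality argument for the kernel of $(-\Delta)^{\beta/2}e^{-tB_\gamma}$. First I would identify this kernel via the Fourier transform on $\TT^N$: the operator $(-\Delta)^{\beta/2}e^{-tB_\gamma}$ corresponds to the Fourier multiplier $|n|^\beta e^{-t\gamma|n|^{2\alpha}}$, $n\in\Z^N$, so it acts as convolution with a function $K_{t\gamma}^{\beta,\alpha}$ on $\TT^N$. On $\R^N$, the analogous kernel $\tilde K_{s}^{\beta,\alpha}$ is given by the inverse Fourier transform of $|\xi|^\beta e^{-s|\xi|^{2\alpha}}$ and enjoys the exact scaling
\[
\tilde K_s^{\beta,\alpha}(x)=s^{-\frac{N+\beta}{2\alpha}}\,\tilde K_1^{\beta,\alpha}\!\bigl(x\,s^{-1/(2\alpha)}\bigr),
\]
since rescaling $\xi\mapsto s^{1/(2\alpha)}\xi$ in the Fourier integral absorbs the time dependence.

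Next I would show that $\tilde K_1^{\beta,\alpha}\in L^r(\R^N)$ for every $r\in[1,\infty]$. For $r=\infty$ this is immediate from the integrability of the symbol $|\xi|^\beta e^{-|\xi|^{2\alpha}}$. For $r<\infty$, one uses that $\tilde K_1^{\beta,\alpha}$ is smooth away from the origin and, by the well-known asymptotics of fractional heat kernels, decays like $|x|^{-N-2\alpha}$ at infinity (up to the extra polynomial factor produced by $(-\Delta)^{\beta/2}$, which does not affect the tail behaviour in the regime $\alpha\in(0,1]$); the endpoint $\alpha=1$ is of course Gaussian. Passing from $\R^N$ to $\TT^N$ is done by periodization: $K_s^{\beta,\alpha}(x)=\sum_{k\in\Z^N}\tilde K_s^{\beta,\alpha}(x+k)$. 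For $s\leq 1$ the dominant contribution is the zeroth lattice term and one obtains, by a change of variable,
\[
\|K_s^{\beta,\alpha}\|_{L^r(\TT^N)}\le C\,s^{-\frac{\beta}{2\alpha}}\,s^{-\frac{N}{2\alpha}(1-\frac{1}{r})}\|\tilde K_1^{\beta,\alpha}\|_{L^r(\R^N)},
\]
while for $s\geq 1$ the spectral gap of $(-\Delta)^\alpha$ on $\TT^N$ gives an exponentially decaying bound that is trivially dominated by the same power of $s$.

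Finally I would apply Young's convolution inequality with exponents satisfying $1+\frac{1}{n}=\frac{1}{r}+\frac{1}{m}$, i.e.\ $1-\frac{1}{r}=\frac{1}{m}-\frac{1}{n}$, to get
\[
\|(-\Delta)^{\beta/2}e^{-tB_\gamma}f\|_{L^n}\le\|K_{t\gamma}^{\beta,\alpha}\|_{L^r}\|f\|_{L^m}\lesssim (t\gamma)^{-\frac{\beta}{2\alpha}-\frac{N}{2\alpha}(\frac{1}{m}-\frac{1}{n})}\|f\|_{L^m},
\]
which is exactly the announced estimate.

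\textbf{Main obstacle.} The technical crux is the $L^1$ estimate for $\tilde K_1^{\beta,\alpha}$, which requires the pointwise decay of the fractional heat kernel (and its fractional derivatives) at infinity. For $\alpha=1$ this is trivial (everything is Schwartz class), but for $\alpha\in(0,1)$ one must invoke the standard asymptotic expansion of the stable density $\tilde p_1^{\alpha}$ (which gives $\tilde p_1^\alpha(x)\sim c|x|^{-N-2\alpha}$ as $|x|\to\infty$) and check that applying $(-\Delta)^{\beta/2}$ preserves enough decay to keep the result in $L^1$; this is the only step where the assumption $\alpha\in(0,1]$ is actually used. Everything else is bookkeeping of the scaling, the passage from $\R^N$ to $\TT^N$, and Young's inequality.
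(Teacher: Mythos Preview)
The paper does not prove this lemma; it is simply recalled from \cite[Lemma~9]{DV13} without argument. So there is no ``paper's proof'' to compare against. Your proposal is a standard and correct route to the estimate: reduce to the $\R^N$ kernel via periodization, use the exact scaling of the symbol $|\xi|^\beta e^{-s|\xi|^{2\alpha}}$, and conclude by Young's inequality once the unit-time kernel is known to lie in every $L^r$.

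Your identification of the crux---the $L^1(\R^N)$ bound for $\tilde K_1^{\beta,\alpha}$ when $\alpha\in(0,1)$---is accurate, but the justification you sketch (pointwise tail asymptotics of the stable density plus a vague statement that $(-\Delta)^{\beta/2}$ ``preserves enough decay'') is looser than it needs to be. A cleaner way to close this step is Bochner subordination: write
\[
(-\Delta)^{\beta/2}e^{-(-\Delta)^\alpha}=\int_0^\infty (-\Delta)^{\beta/2}e^{-s(-\Delta)}\,\mu^\alpha(ds),
\]
where $\mu^\alpha$ is the law of the $\alpha$-stable subordinator at time $1$. The Gaussian case gives $\|(-\Delta)^{\beta/2}e^{-s(-\Delta)}\|_{L^1\to L^1}\le C\,s^{-\beta/2}$, and since $\mu^\alpha$ decays super-exponentially near $0$, the integral $\int_0^\infty s^{-\beta/2}\,\mu^\alpha(ds)$ is finite for every $\beta\ge0$. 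This gives the $L^1$ kernel bound (and hence the $L^\infty\to L^\infty$ bound by duality) without any pointwise asymptotics, and the rest of your argument goes through unchanged.
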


\section{The proof of Lemma~\ref{lem:integral_estimate-1}}\label{lemma}

We present here the proof of Lemma~\ref{lem:integral_estimate-1}.
\begin{proof}We compute the Fourier transform $\hat{\phi}$ of $\phi$. Using the elementary fact that\\
 $\int e^{-2\pi iz\cdot w}e^{-}\frac{|w|^{2}}{\d}dw=\sqrt{\d\pi}e^{-\d\pi^{2}|z|^{2}}$, we find 
\begin{align*}
\hat{\phi}(z) & =\int e^{-2\pi iz\cdot w}\phi(w)dw=\int e^{-2\pi iz\cdot w}e^{-\frac{|w|^{2}}{\d}}\int e^{ib(\xi)\cdot w-\d a(\xi)}f(\xi)d\xi dw\\
 & =\int\int e^{-2\pi i(z-\frac{1}{2\pi}b(\xi))\cdot w}e^{-\frac{|w|^{2}}{\d}}dwe^{-\d a(\xi)}f(\xi)d\xi=\int\sqrt{\d\pi}e^{-|\sqrt{\d}\pi(\frac{1}{2\pi}b(\xi)-z)|^{2}}e^{-\d a(\xi)}f(\xi)d\xi,
\end{align*}
and, hence,
\begin{align*}
|\hat{\phi}(z)|^{2} & =\left|\int\sqrt{\d\pi}e^{-|\sqrt{\d}\pi(\frac{1}{2\pi}b(\xi)-z)|^{2}-\d a(\xi)}f(\xi)d\xi\right|^{2}\\
 & \le\d\pi\int e^{-(|\sqrt{\d}\pi(\frac{1}{2\pi}b(\xi)-z)|^{2}+\d a(\xi))}d\xi\int e^{-|\sqrt{\d}\pi(\frac{1}{2\pi}b(\xi)-z)|^{2}-\d a(\xi)}f^{2}(\xi)d\xi.
\end{align*}
Next we use the assumption on $a,b$ which enters in the following straightforward estimate: 
\begin{align*}
\int e^{-\d(\frac{1}{4}|b(\xi)-2\pi z|^{2}+a(\xi))}d\xi & =\int_{0}^{\infty}\d e^{-\d\tau}|\{\xi:\frac{1}{4}|b(\xi)-2\pi z|^{2}+a(\xi)<\tau\}|d\tau\\
\le & \int_{0}^{\infty}\d e^{-\d\tau}|\{\xi:|b(\xi)-2\pi z|^{2}+a(\xi)<4\tau\}|d\tau\\
\le & \int_{0}^{\infty}\d e^{-\d\tau}\iota(4\tau)d\tau
\le  \frac{1}{4}\int_{0}^{\infty}e^{-\frac{\tau}{4}}\iota(\frac{\tau}{\d})d\tau.
\end{align*}
Hence, 
\begin{align*}
|\hat{\phi}(z)|^{2} & \le\frac{\d\pi}{4}\int_{0}^{\infty}e^{-\frac{\tau}{4}}\iota(\frac{\tau}{\d})d\tau\int e^{-|\sqrt{\d}\pi(\frac{1}{2\pi}b(\xi)-z)|^{2}-\d a(\xi)}f^{2}(\xi)d\xi.
\end{align*}
Integrating the above inequality in $z$ and using that $\int e^{-|\sqrt{\d}\pi(\frac{1}{2\pi}b(\xi)-z)|^{2}-\d a(\xi)}dz=\int e^{-\d\pi^{2}|z|^{2}-\d a(\xi)}dz\le\frac{1}{\sqrt{\d}}$ yields  
\begin{align*}
\int|\hat{\phi}(z)|^{2}dz & \le\frac{\sqrt{\d\pi}}{4}\int_{0}^{\infty}e^{-\frac{\tau}{4}}\iota(\frac{\tau}{\d})d\tau\|f\|_{2}^{2}.
\end{align*}
\end{proof}

\section{The Definition of the kinetic solution for smooth driving signals}\label{takis kinetic100}

For the convenience of the reader we recall here the definition of the kinetic solution to \eqref{eq:scl} for smooth signals given in \cite{CP03}. Note that this reference considers the special case of $z(t)=(t,\ldots,t)$ but the argument extends trivially to the case of $z\in C^1([0,T];\R^N)$. For the notation $\b_{ik}^\psi$ recall \eqref{eq:b_psi}.

\begin{defn}[Definition~2.2 in \cite{CP03}]\label{takis cp} 
Assume that $z\in C^1([0,T];\R^N)$. Then   $u\in C([0,T];L^{1}(\R^{N}))\cap L^{\infty}(\R^{N}\times[0,T])$ for each  $T>0$ is a kinetic solution to \eqref{eq:scl} if 

(i)~for all $k\in\{1,\dots,N\}$ and all nonnegative $\psi\in \mathcal D(\R)$, 
\begin{equation}\label{takis cp1}
\sum_{i=1}^{N}\partial_{x_{i}}\b_{ik}^\psi (u)\in L^{2}(\R^{N}\times[0,T]), 
\end{equation}
(ii)~for any nonnegative $\psi_1, \psi_2 \in \mathcal{D}(\R)$ and for all $k\in\{1,\dots,N\}$%(y,\eta)$,
\begin{equation}\label{takis cp2}
\sqrt {\psi_1(u(x,t))} \sum_{i=1}^{N}\partial_{x_{i}}\b_{ik}^{\psi_2} (u(x,t))=\sum_{i=1}^{N}\partial_{x_{i}}\b_{ik}^{\psi_1\psi_2} (u(x,t)),
\end{equation}
(iii) there exists a non-negative bounded measure $m$ on $\R^N \times\R \times [0,T]$ such that  \eqref{eq:kinetic_form} holds in the sense of distributions, where $n$ is a non-negative measure on $\R^N \times\R \times [0,T]$ such that, for any nonnegative $\psi \in \mathcal D(\R)$,
\begin{equation}\label{takis cp3} 
\int \psi(\xi) dn(x,\xi,t)=\sum_{k=1}^{N}\left(\sum_{i=1}^{N}\partial_{x_{i}}\b_{ik}^\psi(u(x,t))\right)^{2}.
\end{equation}
\end{defn}

\subsection*{Acknowledgements}
Part of the work was completed while B.\ Gess was visiting the University of Chicago. Souganidis is supported by the NSF grant DMS-1266383. B. Gess acknowledges financial support by the DFG through the CRC 1283 "Taming uncertainty and profiting from randomness and low regularity in analysis, stochastics and their applications."

\bibliographystyle{abbrv}
\bibliography{refs}

\end{document}